\newcommand{\R}{{\mathbb R}}
\newcommand{\N}{{\mathbb N}}
\newcommand{\EE}{{\mathbb E}}
\newcommand{\PP}{{\mathbb P}}
\newcommand{\Acal}{{\mathcal A}}
\newcommand{\sgn}{\operatorname{sgn}}
\newcommand{\eps}{\varepsilon}
\newcommand{\tX}{\widetilde X}
\newcommand{\tY}{\widetilde Y}
\newcommand{\tW}{\widetilde W}
\newcommand{\tV}{\widetilde V}
\newcommand{\tB}{\widetilde B}
\newcommand{\tmu}{\widetilde \mu}
\newcommand{\tsigma}{\widetilde \sigma}
\newcommand{\ttau}{\widetilde \tau}
\newcommand{\Fc}{\mathcal{F}}
\theoremstyle{plain}
\newtheorem{theorem}{Theorem}
\newtheorem{prop}{Proposition}
\newtheorem{lemma}{Lemma}
\newtheorem{cor}{Corollary}
\theoremstyle{definition}
\begin{document}

\title[Lower error bounds for approximation of SDEs with irregular coefficients]{The local coupling of noise technique \\ and its application to lower error bounds \\for strong approximation of SDEs \\ with irregular coefficients}

\author[Ellinger]
{Simon Ellinger}
\address{
Faculty of Computer Science and Mathematics\\
University of Passau\\
Innstrasse 33 \\
94032 Passau\\
Germany} \email{simon.ellinger@uni-passau.de}
\begin{abstract}
	In recent years, interest in approximation methods for stochastic differential equations (SDEs) with non-Lipschitz continuous coefficients has increased. We show lower bounds for the $L^p$-error of such methods in the case of approximation at a single point in time or globally in time. On the one hand, we show that for a large class of piecewise Lipschitz continuous drifts and non-additive diffusions the best possible $L^p$-error rate for final time approximation that can be achieved by any method based on finitely many evaluations of the driving Brownian motion is at most $3/4$, which was previously known only for additive diffusions. Moreover, we show that the best $L^p$-error rate for global approximation that can be achieved by any method based on finitely many evaluations of the driving Brownian motion is at most $1/2$ when the drift is locally bounded and the diffusion is locally Lipschitz continuous.
	
	For the derivation of the lower bounds we introduce a new method of proof: the local coupling of noise technique. Using this technique when approximating a solution $X$ of the SDE at the final time, a lower bound for the $L^p$-error of any approximation method based on evaluations of the driving Brownian motion at the points $t_1 < \dots < t_n$ can be determined by the $L^p$-distances of solutions of the same SDE on $[t_{i-1}, t_i]$ with initial values $X_{t_{i-1}}$ and driving Brownian motions that are coupled at $t_{i-1}, t_i$ and independent, conditioned on the values of the Brownian motion at $t_{i-1}, t_i$.
\end{abstract}
\maketitle
Consider a scalar autonomous stochastic differential equation (SDE)
\begin{equation}\label{sde0}
	\begin{aligned}
		dX_t & = \mu(X_t) \, dt +  \sigma(X_t) \, dW_t, \quad t\in [0,T],\\
		X_0 & = x_0
	\end{aligned}
\end{equation}
with deterministic initial value $x_0\in\R$, drift coefficient $\mu\colon\R\to\R$, diffusion coefficient $\sigma\colon \R \rightarrow \R$ and a one-dimensional driving
Brownian motion $W=(W_t)_{t\in[0,T]}$, where $T \in (0, \infty)$. Assume that the coefficients $\mu, \sigma$ are regular enough such that there exists a strong solution of the SDE~\eqref{sde0}.

In recent years, SDEs with irregular coefficients have gained increasing interest, where irregular means that the coefficients do not have to be Lipschitz continuous. It was shown in \cite{GJS17, hhj12, hjk11, JMGY15, MGRY2018, Y17} that the Euler scheme does not converge with any polynomial decay to the solution $X$ in general. Therefore, the question arose under which more general assumptions than Lipschitz continuity of the coefficients solutions of SDEs can be approximated well by the Euler or the Milstein scheme. In particular, increasing attention was paid to numerical methods for the approximation of SDEs with discontinuous drift, see \cite{GLN17, g98b, gk96b, HalidiasKloeden2008, LS16, LS15b, LS18, NSS19, Tag16, Tag2017b, Tag2017a}.

One special class of discontinuous drifts is the class of so-called piecewise Lipschitz continuous coefficients. This means that the coefficients are Lipschitz continuous on finitely many intervals and can have finitely many jumps. In \cite{LS16} it was shown that for piecewise Lipschitz continuous coefficients, for which
\begin{itemize}
	\item[(A1)] there exist a natural number $k \in \mathbb{N}$ as well as $-\infty = \xi_0 < \xi_1 < \dots < \xi_k < \xi_{k+1}=\infty$ such that $\mu$ is Lipschitz continuous on $(\xi_{i-1}, \xi_i)$ for all $i \in \{1, \dots, k+1\}$,
	\item[(A2)] $\sigma$ is Lipschitz continuous and $\sigma(\xi_i) \neq 0$ for all $i \in \{1, \dots, k\}$,
\end{itemize}
the equation~\eqref{sde0} has a unique strong solution. Convergence rates for strong approximation of such SDEs were investigated in~\cite{LS16, LS15b, LS18, MGY20, MGY19b, NSS19, Y2022}.
In \cite{MGY19b} it was proven that under the assumptions (A1),(A2),
\begin{itemize}
	\item[(A3)] $\sigma$ has a Lipschitz continuous derivative on $(\xi_{i-1}, \xi_i)$ for all $i \in \{1, \dots, k+1\}$,
	\item[(A4)] $\mu$ has a Lipschitz continuous derivative on $(\xi_{i-1}, \xi_i)$ for all $i \in \{1, \dots, k+1\}$,
\end{itemize}
a transformed Milstein scheme converges with a rate of at least $3/4$ in terms of the number of evaluations of $W$. For the additive case $\sigma = 1$, it was then shown in \cite{ELL24}, building on \cite{MGY23}, that the best possible $L^p$-error rate that can be achieved by any method based on finitely many evaluations of the driving Brownian motion $W$ is $3/4$ if the assumptions (A1)-(A4) hold and if there is a real jump position, i.e. there is an $i \in \{1, \dots, k\}$ with $\mu(\xi_i-) \ne \mu(\xi_i+)$. The proof of the statement is based on the global coupling of noise technique from \cite{MGY23}, where the upper bound for the transformed Milstein scheme is used to derive the lower bound. We introduce a new method of proof, namely the local coupling of noise technique, which can be used to derive lower bounds for many SDEs where no upper bounds for approximation errors need to be known. As an exemplary application of the technique, we show that one can drop (A4) and even $\sigma = 1$ and we still obtain that $3/4$ is the best possible $L^p$-error rate of methods based on finitely many evaluations of $W$ if there is a real jump position which is reached by the solution.

\begin{theorem}\label{cor:pwLip}
	Assume that (A1)-(A3) hold and let $T=1$.
	Let $x_0 \in \mathbb{R}$ and let $X\colon [0,1] \times \Omega \rightarrow \mathbb{R}$ be a strong solution of the SDE~\eqref{sde0} on the time interval $[0,1]$ with initial value $x_0$ and driving Brownian motion $W$ such that there is an $i \in \{1, \dots, k\}$ for which
	\begin{itemize}
		\item[($\alpha 1$)] it holds $\bigl(\frac{\mu}{\sigma} - \frac{\sigma'}{2}\bigr)(\xi_i+) \neq \bigl(\frac{\mu}{\sigma} - \frac{\sigma'}{2}\bigr)(\xi_i-)$, 
		\item[($\alpha 2$)] there exists a $t^\ast \in (0,1]$ such that for the local density $p_{X_{t^\ast}}$ of $X_{t^\ast}$ it holds $p_{X_{t^\ast}}(\xi_i) > 0$.
	\end{itemize}
	Then there exists a constant $c \in (0, \infty)$ such that for all $n \in \mathbb{N}$,
	\begin{flalign*}
		\inf_{\substack{t_1, \dots, t_n \in [0,1] \\g \colon \mathbb{R}^n \rightarrow \mathbb{R} \: measurable}} \Big( \EE \big[ |X_1 - g(W_{t_1}, \dots, W_{t_n})|^2 \big] \Big)^{1/2} \geq \frac{c}{n^{3 \slash 4}}.
	\end{flalign*}
\end{theorem}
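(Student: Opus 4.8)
The plan is to reduce the infimum to a coupling distance, then reduce to additive noise near the discontinuity by a Lamperti‑type change of variables, and finally bound the coupling distance from below by the \emph{local coupling of noise} technique.

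\textbf{Reductions.} Take $T=1$; since adding evaluation points only decreases the infimum, it suffices to treat grids $0=t_0<t_1<\dots<t_n=1$, and we put $h_j=t_j-t_{j-1}$. The optimal $g$ is the conditional expectation $\EE[X_1\mid W_{t_1},\dots,W_{t_n}]$, so the quantity to bound equals $\bigl(\EE[\operatorname{Var}(X_1\mid W_{t_1},\dots,W_{t_n})]\bigr)^{1/2}$. Let $\tW$ be the Brownian motion obtained from $W$ by retaining $W_{t_1},\dots,W_{t_n}$ and replacing the bridge of $W$ on each $[t_{j-1},t_j]$ by an \emph{independent} Brownian bridge with the same endpoints, and let $\tX$ be the strong solution driven by $\tW$ with initial value $x_0$. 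By the measurable–functional representation of strong solutions, conditionally on $\mathcal G:=\sigma(W_{t_1},\dots,W_{t_n})=\sigma(\tW_{t_1},\dots,\tW_{t_n})$ the variables $X_1$ and $\tX_1$ are independent with the same law, whence the infimum in the theorem, squared, equals $\tfrac12\EE[|X_1-\tX_1|^2]$; it thus suffices to prove $\EE[|X_1-\tX_1|^2]\ge c\,n^{-3/2}$ with $c$ independent of $n$ and of the grid. Next, using ($\alpha 2$) and (A2), a Lamperti‑type transformation $\Phi(x)=\int_{\xi_i}^x\sigma(y)^{-1}\,dy$ on a neighbourhood of $\xi_i$ on which $\sigma$ does not vanish — together with a stopping/localisation argument and, if necessary, a globally Lipschitz extension of the transformed drift away from $\xi_i$ — reduces everything to an SDE with additive noise, a piecewise Lipschitz drift $\mu=\mu_0+\beta\,\ind_{(\zeta,\infty)}$ with $\mu_0$ Lipschitz and a genuine jump $\beta\neq0$ at a point $\zeta$ (this is ($\alpha 1$) after transformation; (A3) keeps the transformed drift piecewise Lipschitz), and $p_{X_{t^\ast}}(\zeta)>0$, since $\Phi$ is bi‑Lipschitz there and $p_{\Phi(X_{t^\ast})}(\Phi(\xi_i))=\sigma(\xi_i)\,p_{X_{t^\ast}}(\xi_i)>0$. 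From now on $\sigma\equiv1$.

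\textbf{The local coupling.} Write $D_j=X_{t_j}-\tX_{t_j}$. Because $W$ and $\tW$ share the same increments over each $[t_{j-1},t_j]$, subtracting the two integral equations on $[t_{j-1},t_j]$ makes the Brownian parts cancel at the right endpoint, leaving
\[
D_j=D_{j-1}+\int_{t_{j-1}}^{t_j}\!\bigl(\mu_0(X_s)-\mu_0(\tX_s)\bigr)ds+\beta\!\int_{t_{j-1}}^{t_j}\!\bigl(\ind_{\{X_s>\zeta\}}-\ind_{\{\tX_s>\zeta\}}\bigr)ds .
\]
In the filtration generated by the grid increments of $W$ together with the $X$‑ and $\tX$‑bridges over the first $j$ subintervals, the two bridges on $[t_{j-1},t_j]$ are conditionally i.i.d.\ given the past, and the identity rearranges to $D_j=(1+b_j)D_{j-1}+R_j$ with $b_j$ measurable with respect to the past, $\EE[R_j\mid\text{past}]=0$, $|b_j|\le C\bigl(h_j+|\beta|\sqrt{h_j}\,\ind_{A_j}\bigr)$, and
\[
\EE[R_j^2\mid\text{past}]\ \ge\ c\,\beta^2 h_j^2\qquad\text{on}\qquad A_j:=\bigl\{\,|X_{t_{j-1}}-\zeta|\le c\sqrt{h_j}\,\bigr\}\cap\bigl\{\,|W_{t_j}-W_{t_{j-1}}|\le C\sqrt{h_j}\,\bigr\}.
\]
Both estimates come from analysing the occupation time of $(\zeta,\infty)$ by a length‑$h_j$ Brownian bridge started near $\zeta$: its conditional variance is of order $h_j^2$ (giving the bound on $\EE[R_j^2\mid\text{past}]$), and its derivative in the starting point is of order $\sqrt{h_j}$ (so that $\EE[\beta\!\int(\ind_{\{X_s>\zeta\}}-\ind_{\{\tX_s>\zeta\}})\mid\text{past}]=O(|\beta|\sqrt{h_j}\,|D_{j-1}|)$, which is absorbed into $b_j$). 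Iterating the recursion for $S_j:=D_j^2\prod_{l\le j}(1+b_l)^{-2}$ and using $D_0=0$ yields
\[
\EE[|X_1-\tX_1|^2]\ \ge\ c\,\beta^2\sum_{j=1}^n h_j^2\;\EE\Bigl[\ind_{A_j}\textstyle\prod_{l\le n}(1+b_l)^{-2}\Bigr].
\]
Since $\sum_l|b_l|\le C+C|\beta|\sum_l\sqrt{h_l}\,\ind_{A_l}$ and the random variable $\sum_l\sqrt{h_l}\,\ind_{A_l}$ — a weighted count of the subintervals on which the solution comes within $\sqrt{h_l}$ of $\zeta$ — has moments bounded uniformly in $n$ and the grid, even under conditioning on an individual $A_j$ (a Brownian occupation‑time estimate), the weights in the last display are bounded below by a constant times $\PP(A_j)$.

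\textbf{Conclusion.} The two events defining $A_j$ are independent, the second having probability bounded below; and uniform ellipticity of the diffusion near $\zeta$ propagates positivity, so that $p_{X_t}(z)\ge p_0>0$ for $|z-\zeta|$ small and all $t\in[t^\ast,1]$, given $p_{X_{t^\ast}}(\zeta)>0$. Hence $\PP(A_j)\ge c_0\sqrt{h_j}$ whenever $t_{j-1}\ge t^\ast$ and $h_j\le h_0$, so $\EE[|X_1-\tX_1|^2]\ge c'\beta^2\sum_{j:\ t_{j-1}\ge t^\ast,\ h_j\le h_0} h_j^{5/2}$. The subintervals in this sum have total length at least $(1-t^\ast)/2$ for large $n$ and number at most $n$, so convexity of $s\mapsto s^{5/2}$ gives $\sum h_j^{5/2}\ge\bigl((1-t^\ast)/2\bigr)^{5/2}n^{-3/2}$, and combined with the first reduction this proves the theorem.

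\textbf{Main obstacle.} The crux is the local coupling step: its whole point is that the divergences created on the individual subintervals add up rather than cancel. This forces one to isolate the genuinely fresh, mean‑zero part $R_j$ correctly — its conditional mean is not zero but only of order $|D_{j-1}|\sqrt{h_j}$ — and, more delicately, to show that the accumulated propagation factors $\prod_l(1+b_l)^{\pm1}$ neither blow up nor vanish, which rests on sharp control of how often the solution revisits the neighbourhood of $\zeta$, i.e.\ on Brownian occupation‑time estimates at the discontinuity. The positivity propagation for the density and the Lamperti reduction are comparatively routine.
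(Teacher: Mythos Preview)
Your strategy is conceptually sound and, in outline, parallels the paper's, but it diverges at a crucial structural point. The paper does \emph{not} run the recursion $D_j=(1+b_j)D_{j-1}+R_j$ at the level of the process with discontinuous drift. Instead it first invokes the assumption (transform) --- which (A1)--(A3) guarantee, via the construction in \cite{MGY19b} --- to pass to $Y=G(X)$ satisfying an SDE with \emph{globally Lipschitz} coefficients $\tmu,\tsigma$. The recursion (Proposition~\ref{prop:localCoupling} and Corollary~\ref{cor:localCoupling}) is then carried out for $Y$, where Lipschitz continuity gives the clean bound $|b_j|\le c/n$ and hence $\prod_l(1+b_l)^{\pm1}=O(1)$ trivially, with no occupation--time input whatsoever. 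Only \emph{after} establishing $\EE[|Y_1-\tY_1|^2]\asymp\sum_i\EE[|Y_{t_i}-\tY^{(Y_{t_{i-1}},\tV^{(i-1)})}_{t_i-t_{i-1}}|^2]$ does the paper return to the original process: each local coupling term is bounded below (Lemma~\ref{lem:lowerBound:coupled}) by localising near $\xi$, where $X$ coincides with a process having piecewise Lipschitz drift and additive noise. The Lamperti transform is a separate, purely local step (proof of Theorem~\ref{thm:lowebound}) used only to reduce to $\sigma\equiv1$ near $\xi$.

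Your approach skips the (transform) step and runs the recursion directly for the discontinuous--drift process. This is what forces the $|\beta|\sqrt{h_j}\,\ind_{A_j}$ correction in $b_j$ and the subsequent need to bound moments of $\sum_l\sqrt{h_l}\,\ind_{A_l}$, even under conditioning on individual $A_j$. That program can be carried through, but note a genuine imprecision: with $A_j$ defined by $|X_{t_{j-1}}-\zeta|\le c\sqrt{h_j}$, the bound $|b_j|\le C(h_j+|\beta|\sqrt{h_j}\ind_{A_j})$ is not quite right. The derivative of the expected occupation time $x\mapsto\EE[\int_0^{h_j}\ind_{\{X^x_s>\zeta\}}ds]$ is of order $\sqrt{h_j}$ throughout a neighbourhood of $\zeta$ of radius $\sim\sqrt{h_j\log(1/h_j)}$, not just $c\sqrt{h_j}$; outside $A_j$ as you define it, the contribution is still $\sqrt{h_j}$ times a constant, not $h_j$. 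You also need the bound to cover the case where $\tX_{t_{j-1}}$, rather than $X_{t_{j-1}}$, is near $\zeta$. These are fixable (enlarge $A_j$ slightly; the local--time moment bound survives), and you correctly flag the propagation factor as the main obstacle --- but the paper's use of (transform) is precisely the device that removes this obstacle altogether, and that is the substantive methodological difference between your argument and theirs.
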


We will show that the statement of Theorem~\ref{cor:pwLip} holds if (A1)-(A3) only hold on an interval around $\xi_i$ with $i$ from $(\alpha 1)$ and a transformation condition is fulfilled.

In addition to final time approximation, we also investigate global approximations using the local coupling of noise technique. It is well-known that the Euler scheme approximates the solution $X$ of a non-autonomous SDE in the global sense with a rate of at least $1/2$ in terms of the number of evaluations of $W$, provided that the coefficients are Lipschitz continuous. The following theorem shows that this rate is optimal.

\begin{theorem}\label{thm:nonadapt}
	Let $T \in (0, \infty)$, $\mu, \sigma\colon [0,T] \times \R \rightarrow \R$ be measurable functions and let $X\colon [0, T] \times \Omega \rightarrow \R$ be an adapted process with continuous paths such that 
	\begin{equation*}
		X_t = X_0 + \int_0^t \mu(s, X_s) \, ds + \int_0^t \sigma(s, X_s) \, dW_s, \qquad t \in [0,T].
	\end{equation*}
	Assume that there exist $t_0 \in [0, T), T_0 \in (t_0, T], \delta \in (0, \infty)$ and $\xi \in \R$ such that
	
	\begin{itemize}[align=left,labelwidth=\widthof{(local Lip)},leftmargin=\labelwidth+\labelsep]
		\item [(local Lip)] $\mu, \sigma$ are Lipschitz continuous on $[t_0, T_0] \times [\xi-\delta, \xi+\delta]$,
		\item [(non-deg)] $\inf_{(t,x) \in [t_0, T_0] \times B_\delta(\xi)} |\sigma(t,x)| > 0$,
		\item [(reach)] $\PP(X_{t_0} \in B_\delta(\xi)) > 0$.
	\end{itemize}
	
	Then there exists a constant $c \in (0, \infty)$ such that for all $n \in \mathbb{N}$,
	\[
	\inf_{\substack{t_1, \dots, t_n \in [0,T] \\g \colon \mathbb{R}^n \rightarrow L^1([0,T]) \: measurable}} \EE\big[ \|X - g(W_{t_1}, \dots, W_{t_n}) \|_{L^1([0,T])}\big] \ge \frac{c}{n^{1/2}}.
	\]
\end{theorem}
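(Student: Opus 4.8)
The plan is to use the local coupling of noise technique. I begin with two harmless reductions. Since enlarging the set of evaluation points only decreases the error, it suffices to prove the bound when $t_0$ and $T_0$ are among the evaluation points; and subdividing each of the (at most $n+1$) gaps of $[t_0,T_0]$ relative to the evaluation points into pieces of length at most a fixed $\rho^\ast\in(0,\infty)$ costs only $O(1)$ extra evaluation points. So I may assume the gaps $I_i=[\alpha_i,\beta_i]$, $i=1,\dots,N$, of $[t_0,T_0]$ all satisfy $\rho_i:=\beta_i-\alpha_i\le\rho^\ast$, with $N=n+O(1)$; here $\rho^\ast$ will be chosen in terms of $\delta$, $t_0$, $T_0$, and the Lipschitz and ellipticity constants of $\mu,\sigma$ on $[t_0,T_0]\times B_\delta(\xi)$. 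Write $\mathcal G$ for the $\sigma$-algebra generated by all evaluations. Fix bounded, globally Lipschitz $\bar\mu,\bar\sigma\colon[t_0,T_0]\times\R\to\R$ with $|\bar\sigma|\ge m_0:=\inf_{[t_0,T_0]\times B_\delta(\xi)}|\sigma|>0$ that agree with $\mu,\sigma$ on $[t_0,T_0]\times B_\delta(\xi)$, and let $\bar X$ solve the $(\bar\mu,\bar\sigma)$-SDE on $[t_0,T_0]$ started at $X_{t_0}$. On the tube event $A:=\{\bar X_t\in B_{\delta/8}(\xi)\ \forall t\in[t_0,T_0]\}$ one has $X=\bar X$ on $[t_0,T_0]$ by localized pathwise uniqueness (the original coefficients are Lipschitz on $B_\delta(\xi)$), and $\PP(A)>0$ by (reach) together with the support theorem.

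Fix a gap $I_i$. Let $W^{(i)}$ be the Brownian motion obtained from $W$ by replacing its increments on $I_i$ by those of an independent Brownian bridge $\widetilde W^{(i)}$ on $I_i$ with the same endpoints $W_{\alpha_i},W_{\beta_i}$, and let $\bar X^{(i)}$ solve the $(\bar\mu,\bar\sigma)$-SDE on $[t_0,T_0]$ started at $X_{t_0}$ and driven by $W^{(i)}$; thus $\bar X^{(i)}$ agrees with $\bar X$ off $I_i$, and on $I_i$ the two processes are the $(\bar\mu,\bar\sigma)$-solutions from the common value $\bar X_{\alpha_i}$ driven by $W$ and by $\widetilde W^{(i)}$, respectively. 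Swapping the $I_i$-increments of $W$ with those of $\widetilde W^{(i)}$ is a measure-preserving involution of the probability space that leaves $\mathcal G$ pointwise fixed — hence it fixes each $g_s:=g(W_{t_1},\dots,W_{t_n})(s)$ and the symmetrized tube event $A_i:=\{\bar X\text{ and }\bar X^{(i)}\text{ both stay in }B_{\delta/8}(\xi)\text{ on }[t_0,T_0]\}$ — while exchanging $\bar X$ with $\bar X^{(i)}$. Therefore $\EE[\mathbf 1_{A_i}|\bar X^{(i)}_s-g_s|]=\EE[\mathbf 1_{A_i}|\bar X_s-g_s|]$, and since $A\supseteq A_i$ gives $X=\bar X$ on $[t_0,T_0]$ throughout $A_i$, the triangle inequality yields, for $s\in I_i$,
\[
\EE|X_s-g_s|\ \ge\ \EE\big[\mathbf 1_{A_i}|\bar X_s-g_s|\big]\ \ge\ \tfrac12\,\EE\big[\mathbf 1_{A_i}|\bar X_s-\bar X^{(i)}_s|\big].
\]

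Now I lower-bound the right-hand side for $s$ in the middle half $J_i$ of $I_i$. Standard Euler-type remainder estimates for the $(\bar\mu,\bar\sigma)$-SDE on $I_i$ give $\bar X_s-\bar X^{(i)}_s=\bar\sigma(\alpha_i,\bar X_{\alpha_i})\,(W_s-W^{(i)}_s)+R_{i,s}$ with $\EE|R_{i,s}|\le C\rho_i$; moreover $W_s-W^{(i)}_s$ is a centered Gaussian of variance $2v_i(s)$, where $v_i(s)=(s-\alpha_i)(\beta_i-s)/\rho_i\ge\rho_i/8$ on $J_i$, and $|\bar\sigma(\alpha_i,\bar X_{\alpha_i})|\ge m_0$ everywhere. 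A routine $L^2$-stability estimate for the globally Lipschitz $\bar$-SDE shows $\PP(A_i)\ge\tfrac12\PP(A)>0$ uniformly in $i$ once $\rho^\ast$ is small, so a Gaussian anti-concentration bound gives $\EE[\mathbf 1_{A_i}|W_s-W^{(i)}_s|]\ge c_1\sqrt{\rho_i}$; choosing $\rho^\ast$ small enough that $m_0c_1\sqrt{\rho_i}$ dominates $2C\rho_i$, we obtain $\EE|X_s-g_s|\ge\tfrac12(m_0c_1\sqrt{\rho_i}-C\rho_i)\ge c_2\sqrt{\rho_i}$ for all $s\in J_i$. The $J_i$ are pairwise disjoint with $|J_i|=\rho_i/2$ and $\sum_i\rho_i=T_0-t_0$ over $N=n+O(1)$ gaps, so by convexity of $x\mapsto x^{3/2}$,
\[
\EE\big\|X-g(W_{t_1},\dots,W_{t_n})\big\|_{L^1([0,T])}\ \ge\ \sum_{i=1}^{N}\int_{J_i}\EE|X_s-g_s|\,ds\ \ge\ \frac{c_2}{2}\sum_{i=1}^{N}\rho_i^{3/2}\ \ge\ \frac{c_2(T_0-t_0)^{3/2}}{2\sqrt N}\ \ge\ \frac{c}{\sqrt n}.
\]

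The main obstacle is the localization. Because $\mu,\sigma$ are controlled only near $\xi$, the coupling must be run on an event forcing the relevant solutions into a neighbourhood of $\xi$; but such an event unavoidably depends on the Brownian path inside the very gap one wants to re-randomize, which would destroy the distributional symmetry underpinning the triangle-inequality step. Symmetrizing the tube event so that it monitors both $\bar X$ and its re-randomized version $\bar X^{(i)}$ repairs this, at the price of having to show $\PP(A_i)$ is bounded below uniformly in $i$ and $n$ — which is where the $L^2$-stability estimate and the smallness of $\rho^\ast$ are used — and at the price of having to check that conditioning on $A_i$ does not wipe out the $\sqrt{\rho_i}$-scale fluctuation of $W_s-W^{(i)}_s$, the role of the Gaussian anti-concentration bound. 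The remaining ingredients — the reduction to $O(1)$-many short gaps, the Euler remainder estimates, and the support-theorem positivity of $\PP(A)$ — are technical but standard.
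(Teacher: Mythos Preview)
Your argument is essentially correct and follows a genuinely different route from the paper. The paper proves the stronger adaptive statement (its Theorem~\ref{thm:adapt}) via a proposition in which equidistant grid points $t_0+\tfrac{l}{n}(T_0-t_0)$ are added, the coefficients are extended to globally Lipschitz bounded $\mu^\ast,\sigma^\ast$, and the coupling is the \emph{reflection} $B\mapsto -B$ of the bridge. The localization there is by \emph{past--only} stopping-time events $\{\widehat\tau^{[0,t_{l-1}],\delta^\ast}(X^\ast)=t_{l-1}\}$, and the crucial step is that, conditioned on the $\sigma$-algebra $\mathfrak A$ of evaluations, the bridge on $[t_{l-1},t_l]$ is independent of the past bridges, so $\EE[|B_s|]$ factors out cleanly. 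You instead use an \emph{independent} bridge on each gap, a \emph{global symmetric} tube event $A_i$ (in the spirit of the local coupling of Section~\ref{subsect:localCoupling}, but applied to the $L^1$-in-time problem), and you handle the dependence between $A_i$ and $W_s-W^{(i)}_s$ by the elementary Gaussian fact that $\EE[\mathbf 1_E|Z|]\ge c(p_0)\,\mathrm{sd}(Z)$ for any event $E$ with $\PP(E)\ge p_0>0$. Your approach is arguably more transparent for the non-adaptive Theorem~\ref{thm:nonadapt}; the paper's stopping-time/conditional-independence machinery is what allows the extension to adaptive methods.

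Two points should be cleaned up. First, the assertion that ``$\bar X^{(i)}$ agrees with $\bar X$ off $I_i$'' is false on $(\beta_i,T_0]$: the two solutions generically differ at $\beta_i$ (same driving increment $W_{\beta_i}-W_{\alpha_i}$ does not force the same endpoint), and this discrepancy propagates. Fortunately you never use agreement after $\beta_i$; the swap still exchanges the full paths $\bar X\leftrightarrow\bar X^{(i)}$ because $X_{t_0}$ and $W|_{[0,t_0]}$ are swap-fixed, and your Euler estimate only concerns $s\in I_i$. Second, the bound $\PP(A_i)\ge\tfrac12\PP(A)$ with both tubes of the \emph{same} radius $\delta/8$ does not follow from a plain $L^2$-stability estimate: on $A\setminus A_i$ you only know $\bar X$ stays in $B_{\delta/8}$ while $\bar X^{(i)}$ exits $B_{\delta/8}$, which does not force $\sup_t|\bar X_t-\bar X^{(i)}_t|$ large. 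The standard fix is a nested tube: with $A':=\{\bar X\in B_{\delta/16}\text{ on }[t_0,T_0]\}$ one has $A'\cap\{\sup_t|\bar X_t-\bar X^{(i)}_t|\le\delta/16\}\subset A_i$, hence $\PP(A_i)\ge\PP(A')-C\rho^\ast(\delta/16)^{-2}\ge\tfrac12\PP(A')>0$ for $\rho^\ast$ small, which is all your anti-concentration step needs. The paper avoids this issue by using the nested radii $\delta^\ast<\delta_0<\delta_1<\delta_2$ from the outset.
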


The above theorem thus extends the corresponding statement of Theorem 12 in~\cite{hhmg2019}, where $\mu$ and $\sigma$ must have continuous first order partial derivatives in the time variable and continuous second order partial derivatives in the state variable, and fits better to the Lipschitz continuity assumptions for the Euler scheme. As in~\cite{hhmg2019}, we also show that the rate 1/2 is optimal even for adaptive methods that use on average $n$ evaluations of $W$.

In the autonomous case, the assumptions from Theorem~\ref{thm:nonadapt} can be further weakened and we show that the statement holds if $\mu$ is only bounded on $[\xi - \delta, \xi + \delta]$ and the remaining assumptions from Theorem~\ref{thm:nonadapt} are fulfilled.

\begin{theorem}\label{thm:nonadapt:auton}
	Let $T \in (0, \infty)$, $\mu, \sigma\colon \R \rightarrow \R$ be measurable functions and let $X \colon [0, T] \times \Omega \rightarrow \R$ be an adapted process with continuous paths such that 
	\begin{equation*}
		X_t = X_0 + \int_0^t \mu(X_s) \, ds + \int_0^t \sigma(X_s) \, dW_s, \qquad t \in [0,T].
	\end{equation*}
	Assume that there exist $t_0 \in [0, T), T_0 \in (t_0, T], \delta \in (0, \infty)$ and $\xi \in \R$ such that it holds (reach) and
	\begin{itemize}[align=left,labelwidth=\widthof{(local reg)},leftmargin=\labelwidth+\labelsep]
		\item[(local reg)] $\mu$ is bounded on $[\xi - \delta, \xi + \delta]$, $\sigma$ is Lipschitz continuous on $[\xi - \delta, \xi + \delta]$,
		\item [(non-deg*)] $\inf_{x \in B_\delta(\xi)} |\sigma(x)| > 0$.
	\end{itemize}
	
	Then there exists a constant $c \in (0, \infty)$ such that for all $n \in \mathbb{N}$,
	\[
	\inf_{\substack{t_1, \dots, t_n \in [0,T] \\g \colon \mathbb{R}^n \rightarrow L^1([0,T]) \: measurable}} \EE\big[ \|X - g(W_{t_1}, \dots, W_{t_n})\|_{L^1([0,T])}\big] \ge \frac{c}{n^{1/2}}.
	\]
\end{theorem}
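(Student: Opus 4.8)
The strategy is to reduce Theorem~\ref{thm:nonadapt:auton} to Theorem~\ref{thm:nonadapt} by a scale-function (Zvonkin-type) change of variables that eliminates the drift in a neighbourhood of $\xi$. Since $\mu$ is bounded on $[\xi-\delta,\xi+\delta]$ and $\sigma$ is Lipschitz (hence bounded) and non-degenerate there, the function $\rho:=2\mu/\sigma^2$ is bounded and measurable on $[\xi-\delta,\xi+\delta]$; extending it by zero outside yields $\rho\in L^\infty(\R)$. Put
\[
	G(x):=\int_\xi^x\exp\!\Bigl(-\int_\xi^y\rho(z)\,dz\Bigr)\,dy,\qquad x\in\R.
\]
Then $G\in C^1(\R)$, $G(\xi)=0$, and $G'(x)=\exp(-\int_\xi^x\rho)$ is bounded above and below by positive constants (because $\rho$ is bounded and vanishes outside the compact interval $[\xi-\delta,\xi+\delta]$) and is Lipschitz on $\R$, with $G''=-\rho\,G'$ Lebesgue-almost everywhere. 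Hence $G$ is a strictly increasing bi-Lipschitz $C^1$-diffeomorphism of $\R$ with globally Lipschitz inverse, and $(\alpha,\beta):=G((\xi-\delta,\xi+\delta))$ is an open interval containing $0$, with $G^{-1}((\alpha,\beta))=(\xi-\delta,\xi+\delta)$.

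Next I would apply the generalized It\^o (It\^o--Tanaka) formula for functions with absolutely continuous first derivative to $X$; note first that $X$ is a continuous semimartingale with $\langle X\rangle_t=\int_0^t\sigma(X_s)^2\,ds$, since the stochastic and Lebesgue integrals in the defining equation are assumed to be well defined. Writing the second-derivative term of $G$ via the occupation-times formula (so that it may be evaluated with the Borel density $-\rho\,G'$ of the measure $G''(a)\,da$) gives that $Y:=G(X)$ satisfies, on $[0,T]$,
\[
	Y_t=G(X_0)+\int_0^t\Bigl(G'\mu-\tfrac12\rho\,G'\sigma^2\Bigr)(X_s)\,ds+\int_0^t(G'\sigma)(X_s)\,dW_s.
\]
On $(\xi-\delta,\xi+\delta)$ we have $\rho=2\mu/\sigma^2$ by construction, so $\bigl(G'\mu-\tfrac12\rho\,G'\sigma^2\bigr)(x)=0$ for every $x\in(\xi-\delta,\xi+\delta)$. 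Expressing the displayed identity in the variable $Y=G(X)$ via $X_s=G^{-1}(Y_s)$ shows that $Y$ is an adapted continuous process solving an autonomous SDE $dY_t=b_Y(Y_t)\,dt+a_Y(Y_t)\,dW_t$ with Borel coefficients $b_Y=\bigl(G'\mu-\tfrac12\rho\,G'\sigma^2\bigr)\circ G^{-1}$ and $a_Y=(G'\sigma)\circ G^{-1}$; here $b_Y\equiv0$ on $(\alpha,\beta)$, on every compact subset of $(\alpha,\beta)$ the coefficient $a_Y$ is Lipschitz (a composition of the Lipschitz maps $G^{-1}$ and $(G'\sigma)|_{[\xi-\delta,\xi+\delta]}$), and $|a_Y|\ge(\inf_\R G')\cdot\inf_{B_\delta(\xi)}|\sigma|>0$ on $(\alpha,\beta)$ by (non-deg*).

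It then remains to invoke Theorem~\ref{thm:nonadapt} for $Y$ and transport the bound back. From (reach) and $G^{-1}((\alpha,\beta))=B_\delta(\xi)$ we get $\PP(Y_{t_0}\in(\alpha,\beta))=\PP(X_{t_0}\in B_\delta(\xi))>0$, and a standard inner-regularity and compactness argument produces $\xi'\in(\alpha,\beta)$ and $\delta'\in(0,\infty)$ with $[\xi'-\delta',\xi'+\delta']\subseteq(\alpha,\beta)$ and $\PP(Y_{t_0}\in B_{\delta'}(\xi'))>0$. On $[t_0,T_0]\times[\xi'-\delta',\xi'+\delta']$ the time-independent coefficients $b_Y\equiv0$ and $a_Y$ are Lipschitz, $a_Y$ is non-degenerate, and (reach) holds for $Y$, so Theorem~\ref{thm:nonadapt} yields $c_1\in(0,\infty)$ with
\[
	\inf_{\substack{t_1,\dots,t_n\in[0,T]\\ h\colon\R^n\to L^1([0,T])\text{ measurable}}}\EE\bigl[\|Y-h(W_{t_1},\dots,W_{t_n})\|_{L^1([0,T])}\bigr]\ge\frac{c_1}{n^{1/2}}.
\]
Finally, let $L_G$ denote the Lipschitz constant of $G$. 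For any measurable $g\colon\R^n\to L^1([0,T])$ the pointwise-in-time composition $\widetilde g:=G\circ g$ is again a measurable map $\R^n\to L^1([0,T])$ (because $v\mapsto G\circ v$ is Lipschitz from $L^1([0,T])$ into itself), and since $Y=G(X)$ pathwise we have $\|Y-\widetilde g(W_{t_1},\dots,W_{t_n})\|_{L^1([0,T])}\le L_G\,\|X-g(W_{t_1},\dots,W_{t_n})\|_{L^1([0,T])}$. Taking expectations, then the infimum over $g$ and $t_1,\dots,t_n$, and comparing with the previous display gives the assertion with $c=c_1/L_G$.

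The step I expect to require the most care is the middle one: making rigorous, under the bare hypothesis that $X$ is a continuous process satisfying the integral equation, that $Y=G(X)$ solves an SDE whose drift coefficient vanishes on a neighbourhood of $0$ and whose diffusion coefficient is Lipschitz and non-degenerate there. This rests on the generalized It\^o formula for $C^{1,1}$-transformations (hence on the It\^o--Tanaka/local-time machinery, after identifying $X$ as a semimartingale with the expected quadratic variation), and on checking that the transformed coefficients, while only measurable globally, are regular enough on an interval around $0$ that still carries positive mass under the law of $Y_{t_0}$ --- the place where (non-deg*) and (reach) are used, and the place where the autonomous structure is essential, since for a merely bounded time-dependent drift no such drift-removing transformation exists.
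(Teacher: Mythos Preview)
Your argument is correct and follows the same overall strategy as the paper: apply a bi-Lipschitz $C^1$ transformation with absolutely continuous derivative to $X$ so that the transformed process has coefficients that are Lipschitz and non-degenerate on a closed interval carrying positive mass at time $t_0$, then invoke Theorem~\ref{thm:nonadapt} and pull the lower bound back via the Lipschitz constant of the transformation.

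The difference lies in the transformation itself. The paper first introduces the extended coefficients $\mu^\ast=1_{[\xi-\delta,\xi+\delta]}\mu$ and $\sigma^\ast$ (constant continuation of $\sigma$), shows via a Lamperti transform and a reference to \cite{EMGY25Hoelder} that these satisfy the abstract condition (transform), and then uses the resulting bi-Lipschitz map $G^\ast$ on $X$. You instead construct the transformation explicitly as a Zvonkin-type scale function $G(x)=\int_\xi^x\exp(-\int_\xi^y 2\mu/\sigma^2)$, which kills the drift on $(\xi-\delta,\xi+\delta)$ directly. Your route is more self-contained (no appeal to the (transform) framework or to external lemmas), while the paper's route fits the argument into the machinery developed for Section~\ref{sect:finalTimePointApprox} and simultaneously yields the stronger adaptive statement of Theorem~\ref{thm:adapt:auton}. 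Both arrive at exactly the same endpoint: a transformed autonomous SDE with locally Lipschitz, non-degenerate coefficients to which Theorem~\ref{thm:nonadapt} applies.
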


This picks up the spirit of~\cite{DGL22}, where it is shown in Theorem 1.2 that the Euler scheme for bounded drift coefficients and sufficiently regular diffusion coefficients reaches a rate of at least $1/2$ up to some small $\eps \in (0,1)$. Also for Theorem~\ref{thm:nonadapt:auton}, we show that the rate $1/2$ is optimal even for adaptive methods that use on average $n$ evaluations of $W$.

The paper is structured as follows. First, in Section~\ref{sect:intro:cplOfNoise}, we present the global coupling of noise technique from~\cite{MGY23} and the new local coupling of noise technique. After that, we introduce some notations in Section~\ref{sect:notation}. Then, in Section~\ref{sect:finalTimePointApprox}, we show how the local coupling of noise technique can be used in many situations to obtain lower bounds for the approximation error in the case of final time approximation and we prove Theorem~\ref{cor:pwLip}. In Section~\ref{sect:globalApprox}, we first introduce the class of adaptive methods and then we show how the local coupling of noise technique can also be applied for the derivation of lower bounds for such methods, thus proving Theorem~\ref{thm:nonadapt} and Theorem~\ref{thm:nonadapt:auton}.

\section{Introduction of the coupling of noise techniques}\label{sect:intro:cplOfNoise}
Let $(\Omega,\Fc, (\Fc_t)_{t \ge 0},\PP)$ be a filtered probability space satisfying the usual conditions and let $W\colon [0,\infty) \times \Omega\to \R$ be a standard Brownian motion. In this section, we first introduce the idea of global couplings and then we discuss the new local couplings.

The global coupling of noise technique has already been used in ~\cite{ELL24, EMGY25Hoelder, EMGY24Sobolev, MGY23} to derive lower bounds for approximation errors. There, one considers approximation errors for stochastic processes $X\colon [0,T] \times \Omega \rightarrow \R$, which are functionals of the Brownian motion $W$, i.e. there are a $\Fc_0$-measurable random variable $\eta_0$ and a Borel-measurable map $F \colon \mathbb{R} \times C([0,T]) \rightarrow C([0,T])$ such that
\begin{equation}\label{eqn_intro_2}
X = F(\eta_0, W).
\end{equation}

For a discretization $0 = t_0 < t_1 < \dots < t_n = T$, where $n \in \N$, one then considers the piecewise linear interpolation $\overline{W}$ of $W$ which is for $t \in [t_{i-1}, t_i]$, where $i \in \{1, \dots, n\}$, given by
\begin{equation}\label{eqn_intro_3}
\overline{W}_t = \frac{t - t_{i-1}}{t_i - t_{i-1}} W_{t_{i}} + \frac{t_i - t}{t_i - t_{i-1}} W_{t_{i-1}}
\end{equation}
and one sets
\[
B = W - \overline{W}.
\]
Then $(B_t)_{t\in [t_0, t_1]}, \dots, (B_t)_{t \in [t_{n-1}, t_n]}$ are independent Brownian bridges, which are independent of $\overline{W}$. Therewith, a new stochastic process $\tB$ is chosen such that it holds $\PP^{(\eta_0, \overline{W} , B)} = \PP^{(\eta_0, \overline{W}, \tB)}$.
This is used to define a new Brownian motion
\[
\tW = \overline{W} + \tB
\] 
and the global coupling
\[
\tX = F(\eta_0, \tW).
\]

First, we turn to the final time approximation. Due to $\PP^{(\eta_0, \overline{W} , B)} = \PP^{(\eta_0, \overline{W}, \tB)}$, an application of the triangle inequality shows that it holds for all measurable $g\colon \R^n \rightarrow \R$ and all $p \in [1, \infty)$
\[
	\big(\EE \bigl[|X_1 - \tX_1|^p \bigr] \big)^{1/p} \le 2\big( \EE \bigl[|X_1 - g(W_{t_1}, \dots, W_{t_n})|^p \bigr] \big)^{1 \slash p}.
\]
Subsequently, suitable bounds for $\big(\EE \bigl[|X_1 - \tX_1|^p \bigr] \big)^{1/p}$ are derived.  In ~\cite{ELL24, EMGY25Hoelder, EMGY24Sobolev, MGY23} problem specific estimates are needed for these bounds and in particular suitable upper bounds for the approximation error are required there, which seems unintuitive.

This problem no longer occurs with the local coupling of noise technique. The idea
is to consider not only couplings on the interval $[0,T]$, but also local couplings on $[t_{i-1}, t_i]$ for $i \in \{1, \dots, n\}$ which can be used to determine the asymptotic behavior of $\big(\EE \bigl[|X_1 - \tX_1|^p \bigr] \big)^{1/p}$. To be more precise, we assume that $(X_t)_{t \in [t_{i-1}, t_i]} = F_i(\eta_{i-1}, (W_{t} - W_{t_{i-1}})_{t \in [t_{i-1}, t_i]})$, where $\eta_{i-1}$  is a $\sigma(\{(W_t,\tW_t) \colon t\in[0, t_{i-1}]\})$-measurable random variable and $F_i\colon \R \times C([t_{i-1}, t_i]) \rightarrow C([t_{i-1}, t_i])$ is a measurable function, and we set
\[
\tX^{(i)} = F_i(\eta_{i-1}, (\tW_{t} - \tW_{t_{i-1}})_{t \in [t_{i-1}, t_i]}).
\]
The goal is then to show that a constant $c \in (0, \infty)$ exists such that
\begin{equation}\label{eqn_intro_1}
c \sum_{i=1}^n \EE \bigl[ |X_{t_i} - \tX^{(i)}_{t_i}|^p \bigr] \le \EE \bigl[|X_1 - \tX_1|^p\bigr].
\end{equation}
This is possible, for example, if $p=2$ and
\begin{itemize}[align=left,labelwidth=\widthof{(tansform))},leftmargin=\labelwidth+\labelsep] 
	
	\item [(transform)] there exists a bi-Lipschitz continuous transformation $G\colon \R \rightarrow \R$ of the SDE~\eqref{sde0} with an absolutely continuous derivative $G'$ such that the transformed coefficients $\tmu = \bigl(G'\mu + \frac{1}{2}D^2G \cdot \sigma^2 \bigr) \circ G^{-1}$ and $\tsigma = \bigl( G' \sigma\bigr) \circ G^{-1}$ are Lipschitz continuous where $D^2G$ is a weak derivative of $G'$.
\end{itemize}
Note that (transform) holds under the assumptions of Theorem~\ref{cor:pwLip}, cf. the proofs of Lemma 1 and Lemma 2 in~\cite{MGY19b}, and if it holds $\sup_{t \in \R} |\int_0^t \mu(z) \, dz| < \infty$, $\mu$ is bounded and $\sigma = 1$, see~\cite{EMGY25Hoelder}.

If (transform) is satisfied, $X$ is the solution of the SDE~\eqref{sde0} and one may choose $\tX^{(i)}$ as the solution of the SDE~\eqref{sde0} on the interval $[t_{i-1}, t_i]$ with initial value $X_{t_{i-1}}$ und driving Brownian motion $(\tW_{t} - \tW_{t_{i-1}})_{t \in [t_{i-1}, t_i]}$. Then the constant $c$ in \eqref{eqn_intro_1} exists, as we show in Corollary~\ref{cor:localCoupling}.

We proceed in a similar way with the global time approximation where we choose $\tB = -B$. For simplification, we assume
that $\mu, \sigma$ are Lipschitz continuous and that $\inf_{x \in \R} |\sigma(x)| > 0$ holds. We consider for $t \in (t_{i-1}, t_i]$ the Euler-type step
\[
\overline{X}_t = X_{t_{i-1}} + \sigma(X_{t_{i-1}})(W_t - W_{t_{i-1}})
\]
and the local coupling 
\[
\widetilde{\overline{X}}_t = X_{t_{i-1}} + \sigma(X_{t_{i-1}})(\tW_t - \tW_{t_{i-1}}).
\]
Then there is a constant $c_1 \in (0, \infty)$ such that it holds for all measurable $g\colon \R^n \rightarrow L^1([0,T])$ 
\begin{equation*}
	\begin{aligned}
		&\EE \big[ \|X - g(W_{t_1}, \dots, W_{t_n}) \|_{L^1([0,T])}\big] \\
		& \qquad \qquad = \int_0^T \EE \big[ |X_s - g(W_{t_1}, \dots, W_{t_n})(s)|\big] \, ds \\
		& \qquad \qquad \geq \sum_{i=1}^n \int_{t_{i-1}}^{t_i}\EE \big[ |\overline{X}_s - g(W_{t_1}, \dots, W_{t_n})(s)|\big] \, ds - \frac{c_1}{n}.
	\end{aligned}
\end{equation*} 
Again, an application of the triangle inequality yields due to $\PP^{(\overline{W} , B)} = \PP^{(\overline{W}, \tB)}$ and the independence of $X_{t_{i-1}}, (W_t - W_{t_{i-1}})_{t \in [t_{i-1}, t_i]}$ for all $i \in \{1, \dots, n\}$
\begin{equation*}
	\begin{aligned}
		&\EE \big[ \|X - g(W_{t_1}, \dots, W_{t_n}) \|_{L^1([0,T])}\big] \\		
		& \qquad \qquad \geq \frac{1}{2} \sum_{i=1}^n \int_{t_{i-1}}^{t_i}\EE \big[ |\overline{X}_s - \widetilde{\overline{X}}_s |\big] \, ds - \frac{c_1}{n}\\
		& \qquad \qquad = \frac{1}{2} \sum_{i=1}^n \int_{t_{i-1}}^{t_i}\EE \big[ |\sigma(X_{t_{i-1}})| \cdot |B_s - \tB_s|\big] \, ds - \frac{c_1}{n},
	\end{aligned}
\end{equation*} 
which gives the appropriate bound because of $\inf_{x \in \R} |\sigma(x)| > 0$ and $\tB = -B$.

\section{Notation}\label{sect:notation}
For $\xi \in \R$ and $\delta \in (0, \infty)$ we write $B_\delta(\xi)$ for the open ball in $\R$ around $\xi$ with radius $\delta$ and $\overline{B_{\delta}(\xi)}$ for its closure. Moreover, for $T \in (0, \infty)$ and $p \in [1, \infty)$ we use $L^p([0,T])$ for the space of measurable functions $f\colon [0,T] \rightarrow \R$ which satisfy $\| f\|_{L^p([0,T])} := \big(\int_0^T |f(x)|^p \, dx\big)^{1/p} < \infty$. The space of continuous functions $f\colon [0,T] \rightarrow \R$ is denoted by $C([0,T])$. Furthermore, the sign function  is given by
\[
\sgn\colon \R \rightarrow \{-1,0,1\}, \qquad x \mapsto 
\begin{cases}
	-1, & \text{if $x < 0$,} \\
	0, & \text{if $x = 0$,} \\
	1, & \text{if $x > 0$.}
\end{cases}
\]

For a probability space $(\Omega, \mathcal{F}, \PP)$ and $p \in [1, \infty)$ we denote by $L^p(\Omega, \mathcal{F}, \PP)$ the space of random variables $\eta\colon (\Omega, \mathcal{F}, \PP) \rightarrow \R$ that satisfy $\EE[|\eta|^p] < \infty$.

\section{Final time approximation}\label{sect:finalTimePointApprox}
	
	In this section, we first introduce the global coupling of noise technique from \cite{MGY23}. We then present the
	local coupling of noise technique and we show that the asymptotic behavior of the global coupling is determined by local couplings. Finally, this is used to prove Theorem~\ref{cor:pwLip}.

\subsection{Global coupling of noise}\label{subsect:globalCoupling}
	As seen in~\eqref{eqn_intro_2}, the global coupling of noise technique requires that the process, which is approximated, can be written as a functional of the Brownian motion $W$. If (transform) is satisfied, this is the case for a solution of the SDE~\eqref{sde0}, which is shown in the following lemma.
	\begin{lemma} \label{lemma:existenceOfFunctionF}
		Let $\mu, \sigma\colon \R \rightarrow \R$ be measurable functions satisfying (transform) with transformation $G\colon \R \rightarrow \R$. Then for every $T \in (0, \infty)$ there exists a Borel-measurable function
		\begin{flalign*}
			F \colon \mathbb{R} \times C([0,T]) \rightarrow C([0,T])
		\end{flalign*}
		such that for every complete probability space $(\Omega, \mathcal{F}, \mathbb{P})$, every Brownian motion $W \colon [0,T] \times \Omega \rightarrow \mathbb{R}$ and every random variable $\eta \colon \Omega \rightarrow \mathbb{R}$ such that $W, \eta$ are independent it holds:
		\begin{itemize}
			\item[(i)] if $X \colon [0,T] \times \Omega \rightarrow \mathbb{R}$ is a strong solution of the SDE \eqref{sde0} on the time interval $[0,T]$ with driving Brownian motion $W$ and initial value $\eta$, then $\mathbb{P}$-almost surely it holds $X = F(\eta, W)$,
			
			\item[(ii)] $F(\eta, W)$ is a strong solution of the SDE \eqref{sde0} on the time interval $[0,T]$ with driving Brownian motion $W$ and initial value $\eta$.
		\end{itemize}
	\end{lemma}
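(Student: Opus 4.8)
The plan is to reduce the claim, via the transformation $G$ from (transform), to the corresponding statement for the transformed SDE, whose coefficients $\tmu,\tsigma$ are globally Lipschitz, and to build $F$ out of a measurable solution functional for that nicer equation. The key observation is a transformation equivalence: since $G$ is bi-Lipschitz with absolutely continuous derivative $G'$ and weak second derivative $D^2G$, the generalized It\^o formula for $C^1$-functions with absolutely continuous derivative applies to $G$, and it applies equally to $G^{-1}$, because $G^{-1}$ is Lipschitz, $1/G'$ is bounded and absolutely continuous ($G'$ being bounded away from $0$ and $\infty$), and hence $(G^{-1})'=(1/G')\circ G^{-1}$ is absolutely continuous with weak derivative $D^2(G^{-1})=-(D^2G\circ G^{-1})/(G'\circ G^{-1})^3$. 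It follows that, on any complete probability space carrying a Brownian motion $W$ and a random variable $\eta$ independent of $W$, a continuous adapted process $X$ is a strong solution of \eqref{sde0} with initial value $\eta$ if and only if $Z:=G(X)$ is a strong solution of
\[
dZ_t=\tmu(Z_t)\,dt+\tsigma(Z_t)\,dW_t,\qquad Z_0=G(\eta),
\]
the ``only if'' being It\^o's formula for $G$ and the ``if'' being It\^o's formula for $G^{-1}$, where the drift and diffusion identities reduce to $\mu$ and $\sigma$ after inserting $(G^{-1})'=1/(G'\circ G^{-1})$ and the formula for $D^2(G^{-1})$; moreover, since $\tmu,\tsigma$ are Lipschitz, this transformed SDE has a pathwise unique strong solution for every initial value.

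The core step is then to produce a single Borel map $\widetilde F\colon\R\times C([0,T])\to C([0,T])$ realizing this solution on every such probability space. First I would fix, once and for all, a Borel ``It\^o-integral functional'': for a measurable $\phi\colon\R\times C([0,T])\to C([0,T])$ of at most linear growth such that $\phi(x,w)(s)$ depends only on $x$ and $(w_r)_{r\le s}$, and a fixed nested sequence of partitions of $[0,T]$ with mesh tending to $0$, let $I(\phi)(x,w)\in C([0,T])$ be the associated left-endpoint (It\^o) Riemann-sum limit whenever it exists uniformly on $[0,T]$ and $0$ otherwise; for every Brownian motion $W$ the exceptional set has probability $0$ and $I(\phi)(x,W)$ is a version of $\int_0^{\cdot}\phi(x,W)(s)\,dW_s$. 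Starting from $\widetilde F_0(x,w)\equiv x$ and setting inductively $\widetilde F_{m+1}(x,w)(t)=x+\int_0^t\tmu\bigl(\widetilde F_m(x,w)(s)\bigr)\,ds+I(\tsigma\circ\widetilde F_m)(x,w)(t)$ gives Borel, non-anticipating maps $\widetilde F_m$; the standard Lipschitz Picard estimates with Gronwall's inequality yield uniform $L^2$-convergence of $\widetilde F_m(\eta,W)$, hence a.s. uniform convergence along a fixed subsequence, and defining $\widetilde F(x,w)$ as that subsequential uniform limit where it exists and $0$ otherwise gives a Borel map. A Fubini argument exploiting the independence of $\eta$ and $W$ then shows that a.s. $\widetilde F_m(\eta,W)\to\widetilde F(\eta,W)$ uniformly and that the limit satisfies the integral equation, so $\widetilde F(\eta,W)$ is the strong solution of the transformed SDE with initial value $\eta$.

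Finally I would set $F(x,w):=G^{-1}\circ\widetilde F(G(x),w)$, with $G^{-1}$ applied pathwise; this is Borel, being the composition of the continuous maps $x\mapsto G(x)$ and the substitution map $z\mapsto G^{-1}\circ z$ on $C([0,T])$ (continuous since $G^{-1}$ is Lipschitz) with the Borel map $\widetilde F$. For (ii): $\widetilde F(G(\eta),W)$ solves the transformed SDE with initial value $G(\eta)$, hence by the transformation equivalence $F(\eta,W)=G^{-1}(\widetilde F(G(\eta),W))$ is a strong solution of \eqref{sde0} with initial value $\eta$. For (i): if $X$ is any strong solution of \eqref{sde0} with $W$ and $\eta$, then $G(X)$ solves the transformed SDE with initial value $G(\eta)$, and so does $\widetilde F(G(\eta),W)$; pathwise uniqueness forces $G(X)=\widetilde F(G(\eta),W)$ $\PP$-a.s., whence $X=G^{-1}(G(X))=F(\eta,W)$ $\PP$-a.s.

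I expect the main obstacle to be the construction of the universal Borel It\^o-integral functional together with the proof that the Picard iteration, carried out once on path space, really does reproduce the strong solution on an arbitrary complete probability space -- that is, making the measurable-selection and transfer argument precise (including the choice of filtration making $W$ a Brownian motion and $\eta$ initial). Given (transform) and the transformation lemmas of \cite{MGY19b}, the transformation equivalence and the pathwise-uniqueness step are then comparatively routine.
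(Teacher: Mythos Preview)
Your proposal is correct and follows essentially the same route as the paper: establish the two-way transformation equivalence between \eqref{sde0} and the Lipschitz SDE for $Z=G(X)$ by checking that $(G^{-1})'=1/(G'\circ G^{-1})$ is absolutely continuous with weak derivative $-\,(D^2G/(G')^3)\circ G^{-1}$, build a universal Borel solution functional $\widetilde F$ for the transformed equation, and set $F(x,w)=G^{-1}\circ\widetilde F(G(x),w)$. The only difference is cosmetic: the paper outsources the construction of $\widetilde F$ and the transfer to arbitrary complete probability spaces to Lemma~9 in \cite{MGY23}, whereas you sketch it explicitly via a path-space Picard iteration; both arrive at the same object and the verification of (i) and (ii) is identical.
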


	\begin{proof}
		Note that $(G^{-1})' = \frac{1}{G' \circ G^{-1}}$ is absolutely continuous since $G^{-1}$ is Lipschitz continuous, $G'$ is absolutely continuous and bounded away from zero. Moreover, $D^2 G^{-1} = - \frac{D^2 G}{(G')^3} \circ G^{-1}$ is a weak derivative of $(G^{-1})'$ and transforming $\tmu, \tsigma$ with $G^{-1}$ yields the transformed coefficients 
		\[
		\bigl((G^{-1})'\tmu + \frac{1}{2}D^2 G^{-1} \cdot (\tsigma)^2 \bigr) \circ G = \mu \qquad \text{and} \qquad \bigl( (G^{-1})' \tsigma\bigr) \circ G = \sigma.
		\]
		Therewith, the claim follows with similar arguments as in the proof of Lemma 9 in~\cite{MGY23}.
	\end{proof}

	We now proceed similarly to Section  2.2 in~\cite{MGY23}. For the proof of Theorem~\ref{cor:pwLip} it suffices to consider discretizations $0 = t_0 < t_1 < \dots < t_{n-1} < t_n = 1$, where $n \in \N$, which satisfy
	\begin{equation}\label{maxDistanceOfTi}
		t_i - t_{i-1} \le \frac{2}{n}, \qquad \qquad i \in \{1, \dots, n\}.
	\end{equation}
	With the piecewise linear interpolation  $\overline{W}$ of $W$ from~\eqref{eqn_intro_3} we define
	\[
	B = W - \overline{W}.
	\]
	Then $(B_t)_{t\in [t_0, t_1]}, \dots, (B_t)_{t \in [t_{n-1}, t_n]}$ are Brownian bridges and 
	\[
	(B_t)_{t\in [t_0, t_1]}, \dots, (B_t)_{t \in [t_{n-1}, t_n]}, \overline{W}
	\]
	are independent. We choose new Brownian bridges $(\tB_t)_{t\in [t_0, t_1]}, \dots, (\tB_t)_{t \in [t_{n-1}, t_n]}$ such that
	\[
	(\tB_t)_{t\in [t_0, t_1]}, \dots, (\tB_t)_{t \in [t_{n-1}, t_n]}, W
	\]
	are independent and we set $\tB = (\tB_t)_{t \in [0,1]}$ as well as
	\[
	\tW = \overline{W} + \tB.
	\]
	Then $\tW$ is a Brownian motion and with Lemma~\ref{lemma:existenceOfFunctionF} we choose a solution $\tX$ of the SDE~\eqref{sde0} with initial value $x_0$ and driving Brownian motion $\tW$, assuming that $\mu, \sigma$ satisfy (transform). Using Lemma~\ref{lemma:existenceOfFunctionF} one may show similar to Lemma~11 in~\cite{MGY23} that the approximation error of a method based on the evaluations $W_{t_1}, \dots, W_{t_n}$ has the distance of the global couplings as a lower bound up to some constant. The formal statement can be seen in the next lemma.
	
	\begin{lemma}\label{lemma:lowerBound:infBoundedByTildeDistance}
		Let $\mu, \sigma \colon \mathbb{R} \rightarrow \mathbb{R}$ be measurable functions such that (transform) holds. Let $x_0 \in \mathbb{R}$ and $X,\widetilde{X} \colon [0,1] \times \Omega \rightarrow \mathbb{R}$ be strong solutions of the SDE \eqref{sde0} on the time interval $[0,1]$ with initial value $x_0$ and driving Brownian motion $W$ and $\widetilde{W}$, respectively. Then for every measurable function $g \colon \mathbb{R}^n \rightarrow \mathbb{R}$ and for every $p \in [1, \infty)$ it holds
		\begin{flalign*}
			\big( \EE \bigl[|X_1 - g(W_{t_1}, \dots, W_{t_n})|^p \bigr] \big)^{1 \slash p} \geq \frac{1}{2} \big(\EE \bigl[|X_1 - \tX_1|^p \bigr] \big)^{1/p}.
		\end{flalign*}
	\end{lemma}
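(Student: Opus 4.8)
The plan is to exploit the coupling construction from the preceding paragraphs: the process $\tB$ (the collection of new Brownian bridges) was chosen so that $W,\tB$ are independent, which in particular means that $\tB$ is independent of the vector $(W_{t_1},\dots,W_{t_n})$. Since $\overline{W}$, and hence the pair $(W_{t_1},\dots,W_{t_n})$, is a deterministic function of $W$ on the grid points, and since $\overline{W}$ is independent of both bridge families $B$ and $\tB$, we obtain that $W$ and $\tW=\overline{W}+\tB$ have the same conditional law given $(W_{t_1},\dots,W_{t_n})$; equivalently, $\PP^{(\eta_0,\overline W,B)}=\PP^{(\eta_0,\overline W,\tB)}$ with $\eta_0=x_0$ deterministic here. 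By Lemma~\ref{lemma:existenceOfFunctionF}, under (transform) there is a Borel map $F$ with $X=F(x_0,W)$ and $\tX=F(x_0,\tW)$ almost surely. Therefore $(X_1,W_{t_1},\dots,W_{t_n})$ and $(\tX_1,W_{t_1},\dots,W_{t_n})$ have the same distribution.

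First I would make the previous sentence precise: write $W_{t_j}=\overline{W}_{t_j}$ and note $\overline W=\Phi(W_{t_1},\dots,W_{t_n})$ for the obvious piecewise-linear interpolation map $\Phi$, so $(W_{t_1},\dots,W_{t_n})$ is $\sigma(\overline W)$-measurable; combined with the independence of $\overline W$ from $B$ and from $\tB$ and the equality of laws of $B$ and $\tB$, a standard argument gives $(\overline W,B)\overset{d}{=}(\overline W,\tB)$ jointly, hence $(\overline W, W)=(\overline W,\overline W+B)\overset{d}{=}(\overline W,\overline W+\tB)=(\overline W,\tW)$. Pushing through $x\mapsto (F(x_0,x)_1, \Phi^{-1}\text{-evaluation at }t_j)$ yields $(X_1, W_{t_1},\dots,W_{t_n})\overset{d}{=}(\tX_1, W_{t_1},\dots,W_{t_n})$.

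Next I would invoke the triangle inequality in $L^p(\Omega)$. For any measurable $g\colon\R^n\to\R$, writing $Y:=g(W_{t_1},\dots,W_{t_n})$, the equality of laws just established gives $\|\tX_1-Y\|_{L^p}=\|X_1-g(W_{t_1},\dots,W_{t_n})\|_{L^p}$ because $Y$ is a function of $(W_{t_1},\dots,W_{t_n})$ which appears jointly with $X_1$ and with $\tX_1$ in the same distribution. Hence
\[
\big(\EE[|X_1-\tX_1|^p]\big)^{1/p}\le \|X_1-Y\|_{L^p}+\|Y-\tX_1\|_{L^p}=2\,\big(\EE[|X_1-g(W_{t_1},\dots,W_{t_n})|^p]\big)^{1/p},
\]
which is exactly the claimed inequality after dividing by $2$.

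The only genuinely delicate point is the joint distributional identity $(X_1, W_{t_1},\dots,W_{t_n})\overset{d}{=}(\tX_1,W_{t_1},\dots,W_{t_n})$: one must be careful that the functional representations $X=F(x_0,W)$ and $\tX=F(x_0,\tW)$ hold $\PP$-a.s. with the \emph{same} $F$ (guaranteed by Lemma~\ref{lemma:existenceOfFunctionF}, since $\tW$ is again a Brownian motion and $\tX$ is constructed via that lemma as the solution driven by $\tW$), and that the grid evaluations $W_{t_j}$ can be recovered measurably from $\overline W$. Everything else is the elementary triangle-inequality step; this mirrors Lemma~11 in~\cite{MGY23}.
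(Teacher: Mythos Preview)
Your proof is correct and follows exactly the approach the paper indicates: the paper does not give a detailed proof but refers to Lemma~11 in~\cite{MGY23}, which proceeds precisely by using the functional representation $X=F(x_0,W)$, $\tX=F(x_0,\tW)$ from Lemma~\ref{lemma:existenceOfFunctionF}, the distributional identity $\PP^{(\overline W,B)}=\PP^{(\overline W,\tB)}$, and the triangle inequality. Your careful verification of the joint distributional identity $(X_1,W_{t_1},\dots,W_{t_n})\overset{d}{=}(\tX_1,W_{t_1},\dots,W_{t_n})$ is exactly the substance of that argument.
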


In consideration of Lemma~\ref{lemma:lowerBound:infBoundedByTildeDistance}, the goal is to find suitable lower bounds for $(\EE[|X_1 - \tX_1|^p])^{1/p}$.

\subsection{Local coupling of noise}\label{subsect:localCoupling}

	In this section we assume that (transform) holds for measurable functions $\mu, \sigma \colon \R \rightarrow \R$. Then, because of the bi-Lipschitz continuity of $G$, there exists a constant $c \in (0, \infty)$ such that for all $p \in [1, \infty)$ it holds
	\[
	\big(\EE \bigl[|X_1 - \tX_1|^p \bigr] \big)^{1/p} \ge c \big(\EE \bigl[|G(X_1) - G(\tX_1)|^p \bigr] \big)^{1/p}.
	\]
	Now, using the It\^{o} formula, see e.g.~\cite[Problem 3.7.3]{ks91}, one sees that the processes 
	\[
	Y = (G(X_t))_{t \in [0,1]} \qquad \text{and} \qquad \tY = (G(\tX_t))_{t \in [0,1]}
	\]
	are solutions of the SDE
	\begin{flalign}
		dY_t = \tmu(Y_t) \, dt + \tsigma(Y_t) \, dW_t \label{sde1}
	\end{flalign}
	with initial value $G(x_0) \in \R$ and driving Brownian motion $W$ and $\tW$, respectively.
	
	Fix $i \in \{1, \dots, n\}$, set 
	\begin{equation*}
		\begin{aligned}
			V^{(i-1)} &:= (V^{(i-1)}_t)_{t\in [0, t_i - t_{i-1}]} := (W_{t + t_{i-1}} - W_{t_{i-1}})_{t \in [0, t_i - t_{i-1}]} , \\
			\tV^{(i-1)} &:= (\tV^{(i-1)}_t)_{t\in [0, t_i - t_{i-1}]} := (\tW_{t + t_{i-1}} - \tW_{t_{i-1}})_{t \in [0, t_i - t_{i-1}]}
		\end{aligned}
	\end{equation*}
	and let $Y^{(\eta, V^{(i-1)})}, \tY^{(\eta, \tV^{(i-1)})}$ denote strong solutions of the SDE~\eqref{sde1} on the time interval $[0, t_i-\nolinebreak t_{i-1}]$ with initial value $\eta$ and driving Brownian motion $V^{(i-1)}$ and $\tV^{(i-1)}$, respectively, where $\eta \in L^2(\Omega, \mathcal{F}, \PP)$ is independent of
	\[
	\Fc^{V^{(i-1)}, \tV^{(i-1)}} = \sigma(\{(V^{(i-1)}_t, \tV^{(i-1)}_t)\colon t\in [0, t_i - t_{i-1}]\}).
	\]
	
	Note that similar to Lemma 13 in~\cite{MGY23} 
	\begin{equation}\label{qx2}
		\forall i\in\{1,\dots,n\}\colon \, (X_{t_{i}}, \tX_{t_{i}}) \text{ and } \sigma\bigl(\{(W_t-W_{t_i},\tW_t-\tW_{t_i})\colon t\in[t_i,1]\}\bigr)\text{ are independent}.
	\end{equation}
	
	Thus, $\tY^{(Y_{t_{i-1}}, \tV^{(i-1)})}$ is a local coupling and with the following proposition we show later that the distance of $Y_1$ and the global coupling $\tY_1$ can be determined by the distances of $Y_{t_i}$ and the local coupling $\tY_{t_i - t_{i-1}}^{(Y_{t_{i-1}}, \tV^{(i-1)})}$.

	\begin{prop} \label{prop:localCoupling}
		There exist constants $c_1, c_2, c_3, c_4 \in (0, \infty)$, which are independent of $n$ and $i$, such that
		\begin{flalign*}
			\EE \bigl[ |Y_{t_i} - \tY_{t_i}|^2 \bigr] \geq (1 - \frac{c_1}{n}) \EE\bigl[ |Y_{t_{i-1}} - \tY_{t_{i-1}}|^2\bigr] + c_2 \EE \bigl[|Y_{t_i} - \tY_{t_i - t_{i-1}}^{(Y_{t_{i-1}}, \tV^{(i-1)})}|^2\bigr]
		\end{flalign*}
		and 
		\begin{flalign*}
			\EE \bigl[ |Y_{t_i} - \tY_{t_i}|^2 \bigr] \leq (1 + \frac{c_3}{n}) \EE \bigl[ |Y_{t_{i-1}} - \tY_{t_{i-1}}|^2 \bigr] + c_4 \EE \bigl[ |Y_{t_i} - \tY_{t_i - t_{i-1}}^{(Y_{t_{i-1}}, \tV^{(i-1)})}|^2 \bigr].
		\end{flalign*}
	\end{prop}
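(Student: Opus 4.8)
I will work on a single subinterval $[t_{i-1},t_i]$ and split the global coupling at its endpoint. Put $h:=t_i-t_{i-1}\le 2/n$ (by \eqref{maxDistanceOfTi}) and let $L$ be a common Lipschitz constant of $\tmu,\tsigma$. Let $\widehat Y$ denote the solution of \eqref{sde1} on $[t_{i-1},t_i]$ with $\widehat Y_{t_{i-1}}=Y_{t_{i-1}}$ and driving Brownian motion $(\tW_t-\tW_{t_{i-1}})_{t\in[t_{i-1},t_i]}$, so that $\widehat Y_{t_i}=\tY^{(Y_{t_{i-1}},\tV^{(i-1)})}_{t_i-t_{i-1}}$ is the local coupling value appearing in the statement. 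As in \eqref{qx2} (Lemma~13 of \cite{MGY23}), $(Y_t)_{t\in[t_{i-1},t_i]}$ and $(\tY_t)_{t\in[t_{i-1},t_i]}$ are the solutions of \eqref{sde1} on $[t_{i-1},t_i]$ started at $Y_{t_{i-1}}$, resp. $\tY_{t_{i-1}}$, with driving Brownian motions $(W_t-W_{t_{i-1}})$, resp. $(\tW_t-\tW_{t_{i-1}})$. Set $D_i:=Y_{t_i}-\widehat Y_{t_i}$ and $R_i:=\widehat Y_{t_i}-\tY_{t_i}$, so $Y_{t_i}-\tY_{t_i}=D_i+R_i$ and $\EE[|Y_{t_i}-\tY_{t_i}|^2]=\EE[D_i^2]+2\EE[D_iR_i]+\EE[R_i^2]$. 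Finally introduce the $\sigma$-algebra $\mathcal H:=\sigma\bigl(Y_{t_{i-1}},\tY_{t_{i-1}},W_{t_i}-W_{t_{i-1}}\bigr)$; note $W_{t_i}-W_{t_{i-1}}=\tW_{t_i}-\tW_{t_{i-1}}=V^{(i-1)}_{t_i-t_{i-1}}=\tV^{(i-1)}_{t_i-t_{i-1}}$.

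The core observation is the following pair of structural facts, obtained from the mutual independence of $\overline W$ and the bridges $(B_t)_{[t_{j-1},t_j]},(\tB_t)_{[t_{j-1},t_j]}$, together with \eqref{qx2} and the elementary fact that a Brownian motion conditioned on its endpoint is an affine shift of an independent Brownian bridge. Conditionally on $\mathcal H$: (i) $V^{(i-1)}$ and $\tV^{(i-1)}$ are independent, each distributed as a Brownian motion on $[0,t_i-t_{i-1}]$ conditioned to equal $W_{t_i}-W_{t_{i-1}}$ at the terminal time, and both independent of $(Y_{t_{i-1}},\tY_{t_{i-1}})$; consequently $Y_{t_i}$ (a function of $Y_{t_{i-1}}$ and $V^{(i-1)}$) is conditionally independent of $(\widehat Y_{t_i},\tY_{t_i})$ (functions of $(Y_{t_{i-1}},\tY_{t_{i-1}})$ and $\tV^{(i-1)}$); (ii) $Y_{t_i}$ and $\widehat Y_{t_i}$ have the same conditional law given $\mathcal H$, since both are the solution functional of \eqref{sde1} started at $Y_{t_{i-1}}$ applied to driving paths with the same conditional law. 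From (i)–(ii) I get $\EE[D_i\mid\mathcal H]=0$ and $\operatorname{Var}(D_i\mid\mathcal H)=\operatorname{Var}(Y_{t_i}\mid\mathcal H)+\operatorname{Var}(\widehat Y_{t_i}\mid\mathcal H)=2\operatorname{Var}(\widehat Y_{t_i}\mid\mathcal H)$, hence the identity $\EE[\operatorname{Var}(\widehat Y_{t_i}\mid\mathcal H)]=\tfrac12\EE[D_i^2]$.

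Next I collect three standard SDE stability estimates (valid for all $n$, since $e^{(2L+L^2)h}-1\le (2L+L^2)h\,e^{2L+L^2}\le C_0/n$). Writing $\delta_t:=\widehat Y_t-\tY_t$ on $[t_{i-1},t_i]$, so $\delta_{t_{i-1}}=Y_{t_{i-1}}-\tY_{t_{i-1}}$ and $R_i=\delta_{t_i}$, Itô's formula applied to $\delta^2$ (the driving Brownian motion is $\tW$ for both $\widehat Y$ and $\tY$) together with Grönwall gives (a) $(1-c_1'/n)\,\EE[|Y_{t_{i-1}}-\tY_{t_{i-1}}|^2]\le\EE[R_i^2]\le(1+c_1'/n)\,\EE[|Y_{t_{i-1}}-\tY_{t_{i-1}}|^2]$, and, using $\sup_{t\in[t_{i-1},t_i]}\EE[\delta_t^2]\le(1+C_0/n)\EE[|Y_{t_{i-1}}-\tY_{t_{i-1}}|^2]$, Itô's isometry gives (b) $\EE[(\delta_{t_i}-\delta_{t_{i-1}})^2]\le c_2'\,h\,\EE[|Y_{t_{i-1}}-\tY_{t_{i-1}}|^2]\le (2c_2'/n)\,\EE[|Y_{t_{i-1}}-\tY_{t_{i-1}}|^2]$. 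Since $\delta_{t_{i-1}}$ is $\mathcal H$-measurable, $\operatorname{Var}(R_i\mid\mathcal H)=\operatorname{Var}(\delta_{t_i}-\delta_{t_{i-1}}\mid\mathcal H)\le\EE[(\delta_{t_i}-\delta_{t_{i-1}})^2\mid\mathcal H]$, whence (c) $\EE[\operatorname{Var}(R_i\mid\mathcal H)]\le (2c_2'/n)\,\EE[|Y_{t_{i-1}}-\tY_{t_{i-1}}|^2]$. Now the cross term: by $\EE[D_i\mid\mathcal H]=0$, $\EE[D_iR_i]=\EE[\operatorname{Cov}(D_i,R_i\mid\mathcal H)]$, and by the conditional independence in (i), $\operatorname{Cov}(D_i,R_i\mid\mathcal H)=\operatorname{Cov}(Y_{t_i}-\widehat Y_{t_i},R_i\mid\mathcal H)=-\operatorname{Cov}(\widehat Y_{t_i},R_i\mid\mathcal H)$; conditional and then ordinary Cauchy–Schwarz, together with the identity of the previous paragraph and (c), yield
\[
|\EE[D_iR_i]|\le\EE\bigl[\operatorname{Var}(\widehat Y_{t_i}\mid\mathcal H)^{1/2}\operatorname{Var}(R_i\mid\mathcal H)^{1/2}\bigr]\le\Bigl(\tfrac12\EE[D_i^2]\Bigr)^{1/2}\Bigl(\tfrac{2c_2'}{n}\EE[|Y_{t_{i-1}}-\tY_{t_{i-1}}|^2]\Bigr)^{1/2}.
\]

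It remains to insert (a) and the cross-term bound into $\EE[|Y_{t_i}-\tY_{t_i}|^2]=\EE[D_i^2]+2\EE[D_iR_i]+\EE[R_i^2]$. Abbreviating $u:=\EE[D_i^2]^{1/2}$ and $P:=\EE[|Y_{t_{i-1}}-\tY_{t_{i-1}}|^2]$, the lower bound reads $\EE[|Y_{t_i}-\tY_{t_i}|^2]\ge u^2-\sqrt{2c_2'/n}\,u\sqrt P+(1-c_1'/n)P$; viewing the right-hand side minus $\tfrac12 u^2+(1-c_1/n)P$ as the quadratic $\tfrac12 u^2-\sqrt{2c_2'/n}\,u\sqrt P+\tfrac{c_1-c_1'}{n}P$ in $u\ge0$, its discriminant is nonpositive as soon as $c_1\ge c_1'+c_2'$, which gives the first claimed inequality with $c_2=\tfrac12$. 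For the second, $2|\EE[D_iR_i]|\le\tfrac12 u^2+\tfrac{2c_2'}{n}P$ by the AM–GM inequality, so $\EE[|Y_{t_i}-\tY_{t_i}|^2]\le\tfrac32 u^2+\bigl(1+\tfrac{c_1'+2c_2'}{n}\bigr)P$, i.e. the second inequality with $c_4=\tfrac32$ and $c_3=c_1'+2c_2'$. (When $P=0$ one has $R_i=0$ a.s. and both inequalities reduce to $\EE[|Y_{t_i}-\tY_{t_i}|^2]=\EE[D_i^2]$, consistent with $c_2\le1\le c_4$.) The main work — and the place where care is needed — is the verification of the structural facts (i)–(ii) with exactly this $\mathcal H$, and the resulting identity $\EE[\operatorname{Var}(\widehat Y_{t_i}\mid\mathcal H)]=\tfrac12\EE[D_i^2]$: it is this identity that tags the conditional fluctuation of $\widehat Y_{t_i}$ to the (typically much smaller than $P$) quantity $\EE[D_i^2]$ and thereby makes the cross term absorbable into the right-hand sides; the Grönwall/Itô estimates (a)–(c) are routine.
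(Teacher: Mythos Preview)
Your proof is correct (modulo a harmless constant: $2|\EE[D_iR_i]|$ is bounded by $2\sqrt{c_2'/n}\,u\sqrt P$, not $\sqrt{2c_2'/n}\,u\sqrt P$; the discriminant argument then needs $c_1\ge c_1'+2c_2'$), but it takes a genuinely different route from the paper. The paper expands $|Y_{t_i}-\tY_{t_i}|^2$ about the \emph{previous-time} difference, writing it as $|Y_{t_{i-1}}-\tY_{t_{i-1}}|^2+2m_i+d_i$ with $m_i=\EE\bigl[(Y_{t_{i-1}}-\tY_{t_{i-1}})\bigl((Y_{t_i}-Y_{t_{i-1}})-(\tY_{t_i}-\tY_{t_{i-1}})\bigr)\bigr]$ and $d_i=\EE\bigl[|(Y_{t_i}-Y_{t_{i-1}})-(\tY_{t_i}-\tY_{t_{i-1}})|^2\bigr]$. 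The cross term $m_i$ is handled by conditioning only on $(Y_{t_{i-1}},\tY_{t_{i-1}})$: since $V^{(i-1)}\stackrel{d}{=}\tV^{(i-1)}$ and both are independent of the past, one may replace $\tV^{(i-1)}$ by $V^{(i-1)}$, after which the stochastic integrals have mean zero and only the Lipschitz drift difference survives, giving $|m_i|\le (c/n)\,\EE[|Y_{t_{i-1}}-\tY_{t_{i-1}}|^2]$ directly. The term $d_i$ is then pinched between constant multiples of $\EE[D_i^2]$ plus $O(1/n)\,P$ by inserting $\widehat Y_{t_i}$ exactly as you do for $R_i$. Your decomposition $Y_{t_i}-\tY_{t_i}=D_i+R_i$ instead singles out the local-coupling term from the start, and your control of $\EE[D_iR_i]$ via the endpoint $\sigma$-algebra $\mathcal H$ and the identity $\EE[\operatorname{Var}(\widehat Y_{t_i}\mid\mathcal H)]=\tfrac12\EE[D_i^2]$ is elegant and makes the bridge structure fully explicit. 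The paper's approach is shorter because it never needs the endpoint conditioning; conversely, your structural facts (i)--(ii) yield more, e.g.\ the exact variance identity. Note also that your own decomposition can be closed without~$\mathcal H$: already $\EE[D_i\mid Y_{t_{i-1}},\tY_{t_{i-1}}]=0$ (same argument as (ii) but unconditionally in the endpoint), hence $\EE[D_iR_i]=\EE[D_i(\delta_{t_i}-\delta_{t_{i-1}})]$, and ordinary Cauchy--Schwarz together with your estimate (b) finishes the job.
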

	
	\begin{proof}
		We use ideas from the proof of Lemma 11 in~\cite{ELL24}.
		
		Throughout this proof let $c_1,c_2,\dots\in (0,\infty)$ denote positive constants,
		which neither depend on $n$ nor on $i$. It holds 
		\begin{equation*}
			\begin{aligned}
				\EE\bigl[ |Y_{t_i} - \widetilde Y_{t_i}|^2\bigr] = \EE\bigl[ |Y_{t_{i-1}} - \widetilde Y_{t_{i-1}}|^2\bigr] + 2 m_i  + d_i, 
			\end{aligned}
		\end{equation*}
		where 
		\[
		m_i := \EE\bigl[ (Y_{t_{i-1}} - \widetilde Y_{t_{i-1}})((Y_{t_i} - Y_{t_{i-1}}) - (\widetilde Y_{t_i} - \widetilde Y_{t_{i-1}})) \bigr]
		\]
		and 
		\[
		d_i := \EE\bigl[ |(Y_{t_i} - Y_{t_{i-1}}) - (\widetilde Y_{t_i} - \widetilde Y_{t_{i-1}})|^2\bigr].
		\]
		Next, we show that 
		\begin{equation}\label{lemf3_1}
			|m_i| \leq \frac{c_1}{n} \EE\bigl[ |Y_{t_{i-1}} - \widetilde Y_{t_{i-1}}|^2\bigr]
		\end{equation}
		and
		\begin{equation}\label{lemf3_2}
			\begin{aligned}
				& d_i \ge c_2 \EE \bigl[ |Y_{t_i} - \tY_{t_i - t_{i-1}}^{(Y_{t_{i-1}}, \tV^{(i-1)})}|^2 \bigr] - \frac{c_3}{n} \EE\bigl[|Y_{t_{i-1}} - \widetilde Y_{t_{i-1}}|^2\bigr]
			\end{aligned}
		\end{equation}
		as well as 
		\begin{equation}\label{lemf3_3}
			\begin{aligned}
				& d_i \le c_4\EE \bigl[ |Y_{t_i} - \tY_{t_i - t_{i-1}}^{(Y_{t_{i-1}}, \tV^{(i-1)})}|^2 \bigr] + \frac{c_5}{n} \EE\bigl[|Y_{t_{i-1}} - \widetilde Y_{t_{i-1}}|^2\bigr]
			\end{aligned}
		\end{equation}
		which will yield the claim.
		
		Before estimating above terms, let us mention some properties of SDEs with Lipschitz continuous coefficients. Since $\tmu, \tsigma$ are Lipschitz continuous, there exists with Lemma~\ref{lemma:existenceOfFunctionF} a measurable function  
		\[
		F \colon \R \times C([0, t_i - t_{i-1}]) \rightarrow C([0, t_i - t_{i-1}])
		\]
		such that $F(\eta, \hat V)$ is a strong solution of the SDE~\eqref{sde1} with driving Brownian motion $\hat V \in \{V^{(i-1)}, \tV^{(i-1)}\}$ and initial value $\eta$ where $\eta \in L^2(\Omega, \mathcal{F}, \PP)$ is independent of $\Fc^{V^{(i-1)}, \tV^{(i-1)}}$.
		
		We will use a classical stability result for SDEs which states that there exists a constant $d \in (0, \infty)$, which is independent of $n$ and $i$, such that for all $\eta_1, \eta_2 \in L^2(\Omega, \mathcal{F}, \PP)$, which are independent of $\Fc^{V^{(i-1)}, \tV^{(i-1)}}$, it holds
		\begin{equation}\label{lemf3_4}
			\begin{aligned}
				&\sup_{s \in [0, t_i - t_{i-1}]} \EE \bigl[|Y^{(\eta_1, V^{(i-1)})}_s - Y^{(\eta_2, V^{(i-1)})}_s|^2 \bigr]\le d \EE \bigl[|\eta_1 - \eta_2|^2 \bigr], \\
				& \sup_{s \in [0, t_i - t_{i-1}]} \EE \bigl[|\tY^{(\eta_1, \tV^{(i-1)})}_s - \tY^{(\eta_2, \tV^{(i-1)})}_s|^2 \bigr] \le d \EE \bigl[|\eta_1 - \eta_2|^2 \bigr],
			\end{aligned}	
		\end{equation} 
		 see e.g. the proof of Theorem 9.2.4 in~\cite{RevuzYor1999}.
		
		Let us start with the proof of \eqref{lemf3_1}. Since it holds $Y_{t_i} = F(Y_{t_{i-1}}, V^{(i-1)})(t_i - t_{i-1}), \tY_{t_i} = F(\tY_{t_{i-1}}, \tV^{(i-1)})(t_i - t_{i-1})$ by the choice of $F$, we obtain using \eqref{maxDistanceOfTi}, \eqref{qx2} and \eqref{lemf3_4} for $\PP^{(Y_{t_{i-1}}, \tY_{t_{i-1}})}$-almost all $(y, \tilde y) \in \R \times \R$
		\begin{equation*}
			\begin{aligned}
				&\big|\EE\bigl[ (Y_{t_{i-1}} - \widetilde Y_{t_{i-1}})((Y_{t_i} - Y_{t_{i-1}}) - (\widetilde Y_{t_i} - \widetilde Y_{t_{i-1}})) \vert (Y_{t_{i-1}} , \tY_{t_{i-1}}) = (y, \tilde y)\bigr]\big|\\
				&\qquad \qquad = |y - \tilde y| \cdot \big| \EE\bigl[ (F(y, V^{(i-1)})(t_i - t_{i-1}) - y) - (F(\tilde y, \tV^{(i-1)})(t_i - t_{i-1}) - \tilde y) \bigr]\big|\\
				&\qquad \qquad = |y - \tilde y| \cdot \big|\EE\bigl[ (F(y, V^{(i-1)})(t_i - t_{i-1}) - y) - (F(\tilde y, V^{(i-1)})(t_i - t_{i-1}) - \tilde y) \bigr]\big|\\
				&\qquad \qquad = |y - \tilde y| \cdot \big|\EE\bigl[ (Y_{t_i - t_{i-1}}^{(y, V^{(i-1)})} - y) - (Y_{t_i - t_{i-1}}^{(\tilde y, V^{(i-1)})} - \tilde y) \bigr]\big|  \\
				&\qquad \qquad = |y - \tilde y| \cdot \big|\EE \bigl[\int_0^{t_i - t_{i-1}} \tmu(Y_s^{(y, V^{(i-1)})}) - \tmu(Y_s^{(\tilde y, V^{(i-1)})}) \, ds \bigl]\big| \\
				&\qquad \qquad \leq \frac{c_6}{n} \cdot |y - \tilde y|^2.
			\end{aligned}
		\end{equation*}
		This shows \eqref{lemf3_1} and we continue with the derivation of \eqref{lemf3_2} and \eqref{lemf3_3}. Note that it holds
		\begin{equation}\label{lemf3_5}
			d_i \ge \frac{1}{2} \EE\bigl[ |Y_{t_i} - \tY^{(Y_{t_{i-1}}, \tV^{(i-1)})}_{t_i - t_{i-1}}|^2 \bigr] -  \EE\bigl[ |( \tY^{( Y_{t_{i-1}}, \tV^{(i-1)})}_{t_i - t_{i-1}} - Y_{t_{i-1}}) - ( \tY_{t_i} -  \tY_{t_{i-1}})|^2\bigr]
		\end{equation}
		as well as 
		\begin{equation}\label{lemf3_6}
			d_i \le 2 \EE\bigl[ |Y_{t_i} - \tY^{(Y_{t_{i-1}}, \tV^{(i-1)})}_{t_i - t_{i-1}}|^2 \bigr] + 2 \EE\bigl[ |( \tY^{(Y_{t_{i-1}}, \tV^{(i-1)})}_{t_i - t_{i-1}} - Y_{t_{i-1}}) - ( \tY_{t_i} -  \tY_{t_{i-1}})|^2\bigr].
		\end{equation}
		By the choice of $F$, an application of \eqref{maxDistanceOfTi}, \eqref{qx2} and \eqref{lemf3_4} shows that it holds for $\PP^{(Y_{t_{i-1}}, \tY_{t_{i-1}})}$-almost all $(y, \tilde y) \in \R \times \R$
		\begin{equation*}
			\begin{aligned}
				&\EE\bigl[ |( \tY^{(Y_{t_{i-1}}, \tV^{(i-1)})}_{t_i - t_{i-1}} - Y_{t_{i-1}}) - ( \tY_{t_i} -  \tY_{t_{i-1}})|^2 \vert (Y_{t_{i-1}} , \tY_{t_{i-1}}) = (y, \tilde y) \bigr] \\
				& \qquad  = \EE\Bigl[ \Bigl| \int_0^{t_i - t_{i-1}} \tmu(\tY^{(y, \tV^{(i-1)})}_s) - \tmu(\tY^{(\tilde y, \tV^{(i-1)})}_s)\, ds \\
				& \qquad \qquad \qquad + \int_0^{t_i - t_{i-1}} \tsigma(\tY^{(y, \tV^{(i-1)})}_s) - \tsigma(\tY^{(\tilde y, \tV^{(i-1)})}_s)\, d\tV^{(i-1)}_s \Bigr|^2\Bigr] \\
				&\qquad \le c_7\EE\Bigl[  \int_0^{t_i - t_{i-1}} | \tY^{(y, \tV^{(i-1)})}_s - \tY^{(\tilde y, \tV^{(i-1)})}_s |^2\, ds\Bigr] \\
				& \qquad \le \frac{c_8}{n} \cdot  |y - \tilde y|^2,
			\end{aligned}
		\end{equation*}
		and thus we obtain
		\begin{equation*}
			\EE\bigl[ |( \tY^{(Y_{t_{i-1}}, \tV^{(i-1)})}_{t_i - t_{i-1}} - Y_{t_{i-1}}) - ( \tY_{t_i} -  \tY_{t_{i-1}})|^2\bigr] \le \frac{c_8}{n} \EE \bigl[ |Y_{t_{i-1}} - \tY_{t_{i-1}}|^2 \bigr].
		\end{equation*}
		Together with \eqref{lemf3_5} and \eqref{lemf3_6} this yields \eqref{lemf3_2} and \eqref{lemf3_3} which finishes the proof.
	\end{proof}
	
Proposition~\ref{prop:localCoupling} can be used now to determine the asymptotic behavior of $\EE \bigl[|Y_1 - \tY_1|^2 \bigr]$ using local couplings.

\begin{cor}\label{cor:localCoupling}
	There exist constants $c_1, c_2 \in (0, \infty)$ and $N \in \N$ such that 
	\[
	c_1 \sum_{i=1}^n \EE \bigl[ |Y_{t_i} - \tY_{t_i - t_{i-1}}^{(Y_{t_{i-1}}, \tV^{(i-1)})}|^2 \bigr] \le \EE \bigl[|Y_1 - \tY_1|^2 \bigr] \le c_2 \sum_{i=1}^n \EE \bigl[ |Y_{t_i} - \tY_{t_i - t_{i-1}}^{(Y_{t_{i-1}}, \tV^{(i-1)})}|^2 \bigr],
	\]
	if $n \ge N$.
\end{cor}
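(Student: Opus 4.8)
The plan is to read the two inequalities of Proposition~\ref{prop:localCoupling} as a two-sided linear recursion for the sequence $a_i := \EE[|Y_{t_i} - \tY_{t_i}|^2]$ driven by the nonnegative sequence $b_i := \EE[|Y_{t_i} - \tY_{t_i - t_{i-1}}^{(Y_{t_{i-1}}, \tV^{(i-1)})}|^2]$, and then to solve it by unrolling. Write $c_1, c_2, c_3, c_4 \in (0,\infty)$ for the ($n$- and $i$-independent) constants supplied by Proposition~\ref{prop:localCoupling}, so that, for every $i \in \{1, \dots, n\}$,
\[
(1 - c_1/n)\, a_{i-1} + c_2 b_i \;\le\; a_i \;\le\; (1 + c_3/n)\, a_{i-1} + c_4 b_i .
\]
The boundary data are $a_0 = 0$, because the common initial value $G(x_0)$ of $Y$ and $\tY$ is deterministic so $Y_{t_0} = \tY_{t_0}$, and $a_n = \EE[|Y_1 - \tY_1|^2]$ since $t_n = 1$.

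For the upper estimate I would iterate the right-hand inequality from $i = n$ down to $i = 1$ and use $a_0 = 0$, obtaining
\[
\EE[|Y_1 - \tY_1|^2] \;=\; a_n \;\le\; c_4 \sum_{i=1}^n (1 + c_3/n)^{n-i} b_i \;\le\; c_4 (1 + c_3/n)^n \sum_{i=1}^n b_i \;\le\; c_4 e^{c_3} \sum_{i=1}^n b_i ,
\]
which is the upper bound of the corollary with constant $c_4 e^{c_3}$, valid for every $n$. For the lower estimate I would iterate the left-hand inequality in the same way to get $a_n \ge c_2 \sum_{i=1}^n (1 - c_1/n)^{n-i} b_i$, and then bound the accumulated factors from below uniformly in $i$: setting $N := \lceil 2 c_1 \rceil$, for $n \ge N$ one has $c_1/n \le 1/2$, hence $\ln(1 - c_1/n) \ge -2 c_1/n$, and therefore $(1 - c_1/n)^{n-i} \ge (1 - c_1/n)^n \ge e^{-2 c_1}$ for all $i \in \{1,\dots,n\}$. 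Consequently $\EE[|Y_1 - \tY_1|^2] \ge c_2 e^{-2 c_1} \sum_{i=1}^n b_i$ whenever $n \ge N$, which is the lower bound of the corollary with constant $c_2 e^{-2 c_1}$.

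Once Proposition~\ref{prop:localCoupling} is in hand this is little more than careful bookkeeping, so there is no serious obstacle. The one point deserving attention is the lower bound: the multiplicative factors $(1 - c_1/n)^{n-i}$ accumulated over up to $n$ steps must stay bounded away from $0$ uniformly in $i$ and $n$, which is precisely what forces the restriction $n \ge N$ (one needs $n$ large enough that $1 - c_1/n$ is positive and that $(1-c_1/n)^n$ remains comparable to $e^{-c_1}$). A further routine check is that the constants provided by Proposition~\ref{prop:localCoupling} are genuinely independent of both $i$ and $n$ — as that proposition asserts — so that the iterated estimates above involve only $n$-independent constants.
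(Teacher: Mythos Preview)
Your argument is correct and follows essentially the same route as the paper: unroll the two-sided recursion from Proposition~\ref{prop:localCoupling} with $a_0=0$ to obtain $c_2\sum_i (1-c_1/n)^{n-i}b_i \le a_n \le c_4\sum_i (1+c_3/n)^{n-i}b_i$, then bound the geometric factors uniformly. The only cosmetic difference is that the paper appeals to the limits $(1\pm c/n)^n \to e^{\pm c}$ to produce $N$ and the constants, whereas you make the lower bound explicit via $\ln(1-c_1/n)\ge -2c_1/n$ for $n\ge \lceil 2c_1\rceil$; both amount to the same thing.
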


\begin{proof}
	In the following $c_1,c_2,\dots\in (0,\infty)$ denote positive constants, which do not depend on $n$. With $c_1, c_2, c_3, c_4$ as in Proposition~\ref{prop:localCoupling} it follows by induction, if $n > c_1$,
	\begin{equation*}
		\begin{aligned}
			\EE \bigl[|Y_1 - \tY_1|^2 \bigr] &\ge c_2 \sum_{i=1}^n (1-\frac{c_1}{n})^{n-i} \EE \bigl[ |Y_{t_i} - \tY_{t_i - t_{i-1}}^{(Y_{t_{i-1}}, \tV^{(i-1)})}|^2 \bigr] \\
			&\ge c_2 (1 - \frac{c_1}{n})^n \sum_{i=1}^n \EE \bigl[ |Y_{t_i} - \tY_{t_i - t_{i-1}}^{(Y_{t_{i-1}}, \tV^{(i-1)})}|^2 \bigr]
		\end{aligned}
	\end{equation*}
	and
	\begin{equation*}
		\begin{aligned}
			\EE \bigl[|Y_1 - \tY_1|^2 \bigr] &\le c_4 \sum_{i=1}^n (1+\frac{c_3}{n})^{n-i} \EE \bigl[ |Y_{t_i} - \tY_{t_i - t_{i-1}}^{(Y_{t_{i-1}}, \tV^{(i-1)})}|^2\bigr] \\
			&\le c_4 (1 + \frac{c_3}{n})^n \sum_{i=1}^n \EE \bigl[ |Y_{t_i} - \tY_{t_i - t_{i-1}}^{(Y_{t_{i-1}}, \tV^{(i-1)})}|^2 \bigr].
		\end{aligned}
	\end{equation*}
	Because of $\lim_{n \rightarrow \infty} (1 - \frac{c_1}{n})^n = e^{-c_1}$ and $\lim_{n \rightarrow \infty} (1 + \frac{c_3}{n})^n = e^{c_3}$, the claim follows.
\end{proof}

\subsection{Proof of Theorem~\ref{cor:pwLip}}
We now turn to the proof of Theorem~\ref{cor:pwLip}. Therefore, we show a more general statement, where (A1)-(A3) must hold only locally around $\xi_i$ and the coefficients fulfill (transform). Note that under the assumptions of Theorem~\ref{cor:pwLip}, (transform) is satisfied, cf. the proofs of Lemma 1 and Lemma 2 in~\cite{MGY19b}.

\begin{theorem}\label{thm:lowebound}
	Assume that it holds (transform) for measurable functions $\mu, \sigma \colon \R \rightarrow \R$ and	\begin{itemize}[align=left,labelwidth=\widthof{(jump)},leftmargin=\labelwidth+\labelsep]
		\item[(jump)] 
		there exist $\delta \in (0, \infty), \xi \in \R$ such that \begin{itemize}[align=left,labelwidth=\widthof{(jump3)},leftmargin=\labelwidth+\labelsep]
			\item [(jump1)] $\mu$ is Lipschitz continuous on $[\xi - \delta, \xi)$ and on $(\xi, \xi + \delta]$,
			\item [(jump2)] it holds $\inf_{x \in B_\delta(\xi)} |\sigma(x)| > 0$, $\sigma$ is Lipschitz continuous on $[\xi - \delta, \xi + \delta]$, its derivative $\sigma'$ exists on $B_\delta(\xi) \setminus \{\xi\}$ and $\sigma'$ is Lipschitz continuous on $(\xi - \delta, \xi)$ and on $(\xi, \xi + \delta)$, respectively,
			\item[(jump3)] it holds $\bigl(\frac{\mu}{\sigma} - \frac{\sigma'}{2}\bigr)(\xi+) \neq \bigl(\frac{\mu}{\sigma} - \frac{\sigma'}{2}\bigr)(\xi-)$.
		\end{itemize}  
	\end{itemize}
	
	Let $x_0 \in \mathbb{R}$ and let $X\colon [0,1] \times \Omega \rightarrow \mathbb{R}$ be a strong solution of the SDE~\eqref{sde0} on the time interval $[0,1]$ with initial value $x_0$ and driving Brownian motion $W$ such that 
	\begin{itemize}[align=left,labelwidth=\widthof{(reach jump)},leftmargin=\labelwidth+\labelsep]  
		\item[(reach jump)] there exists a $t^\ast \in (0,1]$ with $p_{X_{t^\ast}}(\xi) > 0$.
	\end{itemize}
	
	Then there exists a constant $c \in (0, \infty)$ such that for all $n \in \mathbb{N}$,
	\begin{flalign*}
		\inf_{\substack{t_1, \dots, t_n \in [0,1] \\g \colon \mathbb{R}^n \rightarrow \mathbb{R} \: measurable}} \big(\EE \bigl[|X_1 - g(W_{t_1}, \dots, W_{t_n})|^2 \bigr]\big)^{1/2} \geq \frac{c}{n^{3 \slash 4}}.
	\end{flalign*} 
\end{theorem}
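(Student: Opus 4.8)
The plan is to combine the reduction machinery (Lemma~\ref{lemma:lowerBound:infBoundedByTildeDistance} together with the transformation to the SDE~\eqref{sde1}) with Corollary~\ref{cor:localCoupling}, so that the problem reduces to lower-bounding the sum $\sum_{i=1}^n \EE[|Y_{t_i} - \tY_{t_i - t_{i-1}}^{(Y_{t_{i-1}}, \tV^{(i-1)})}|^2]$ of local-coupling distances for the SDE with Lipschitz coefficients $\tmu, \tsigma$. Recall that under (transform) we have $\tmu$ Lipschitz and $\tsigma$ Lipschitz with $\tsigma$ bounded away from zero near the image point $\zeta := G(\xi)$; moreover the assumption (jump3), after transformation, translates into a genuine jump of the transformed drift $\tmu$ at $\zeta$, i.e. $\tmu(\zeta+) \ne \tmu(\zeta-)$ (this is exactly the content of $(\alpha 1)$/(jump3), cf. the computation $\tmu = (G'\mu + \tfrac12 D^2G\cdot\sigma^2)\circ G^{-1}$ with $G' = 1/\sigma$ up to a constant, so that $\tmu \approx (\mu/\sigma - \sigma'/2)\circ G^{-1}$). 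By the stated independence in~\eqref{qx2} and the conditioning structure, $\EE[|Y_{t_i} - \tY_{t_i - t_{i-1}}^{(Y_{t_{i-1}}, \tV^{(i-1)})}|^2]$ equals $\int h(\Delta_i, y)\, \PP^{Y_{t_{i-1}}}(dy)$ where $h(\Delta, y) = \EE[|Y_\Delta^{(y,V)} - \tY_\Delta^{(y,\tV)}|^2]$ with $V, \tV$ independent Brownian motions on $[0,\Delta]$, $\Delta_i = t_i - t_{i-1}$.

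The core analytic step is a lower bound of the form $h(\Delta, y) \ge c\,\Delta^{3/2}$, uniform in $y$ ranging over a neighborhood of $\zeta$, valid for $\Delta$ small. This is where the jump of $\tmu$ enters: starting from $y$ near $\zeta$, on a time interval of length $\Delta$ the solution $Y^{(y,V)}$ spends an order-$\sqrt\Delta$ fraction of time on each side of $\zeta$ (because $\tsigma(\zeta)\ne 0$ makes the diffusion locally nondegenerate, so the occupation time of $[\zeta, \zeta+\sqrt\Delta]$ is of order $\sqrt\Delta$ — a standard local-time/occupation-time estimate for Itô diffusions with nondegenerate diffusion coefficient). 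Since $Y^{(y,V)}$ and $\tY^{(y,\tV)}$ use independent noises, their signed occupation times on the two sides of $\zeta$ are (asymptotically) independent, and the difference of the drift contributions $\int_0^\Delta \tmu(Y_s)\,ds - \int_0^\Delta \tmu(\tY_s)\,ds$ has a component of size comparable to $|\tmu(\zeta+) - \tmu(\zeta-)|$ times the difference of occupation times of, say, $(\zeta,\infty)$, which is of order $\sqrt\Delta$; more precisely $\int_0^\Delta \tmu(Y_s)\,ds - \int_0^\Delta\tmu(\tY_s)\,ds = (\tmu(\zeta+)-\tmu(\zeta-))\,(L_\Delta - \tilde L_\Delta) + O(\Delta)$ where $L_\Delta = \int_0^\Delta \ind_{(\zeta,\infty)}(Y_s)\,ds$ and similarly for $\tilde L_\Delta$, and $\EE[(L_\Delta - \tilde L_\Delta)^2] \gtrsim \Delta\cdot\sqrt\Delta\cdot\Delta^{1/2}$... the upshot is that the second moment of the drift mismatch is of order $\Delta^{3}$? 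No — the right scaling is $\EE[|L_\Delta - \tilde L_\Delta|^2] \asymp \Delta^{2}$ is false; rather the fluctuation of the occupation time $L_\Delta$ around its mean $\Delta/2$ is of order $\Delta^{3/4}$ (by Brownian scaling the occupation time of a half-line by a Brownian bridge-ish path of length $\Delta$ has mean $\Delta/2$ and standard deviation $\sim \Delta$), so $\EE[|L_\Delta-\tilde L_\Delta|^2]\asymp \Delta^{3/2}$, giving $h(\Delta,y)\gtrsim\Delta^{3/2}$. One must also check that the stochastic-integral part $\int_0^\Delta \tsigma(Y_s)\,dV_s - \int_0^\Delta\tsigma(\tY_s)\,d\tV_s$ cannot cancel this (it is a martingale-type term of order $\sqrt\Delta$ in $L^2$, but it is not anticorrelated with the drift mismatch in a way that would kill the lower bound — a careful argument conditioning on the occupation times, or restricting to an event of positive probability on which the signs align, handles this).

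Granting the local estimate $h(\Delta_i, y)\ge c\,\Delta_i^{3/2}$ for $y$ in a fixed neighborhood $U$ of $\zeta$ and $\Delta_i$ small, the assumption (reach jump) combined with the transformed local density propagates: $p_{X_{t^\ast}}(\xi) > 0$ plus local boundedness/nondegeneracy near $\xi$ gives that $\PP^{Y_{t_{i-1}}}$ has a density bounded below by a positive constant on a fixed neighborhood of $\zeta$ for all $t_{i-1}$ in some subinterval $[a,b]\subset(0,1]$ (using forward/backward propagation of positivity of densities for SDEs with locally Lipschitz coefficients and nondegenerate diffusion — e.g. a support-theorem/Girsanov or parametrix argument, restricted to the relevant time window $[t^\ast$-ish$, 1]$; since (reach jump) guarantees positivity at one time, it is inherited at all later times up to $1$). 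Hence $\EE[|Y_{t_i}-\tY^{(Y_{t_{i-1}},\tV^{(i-1)})}_{t_i - t_{i-1}}|^2] \ge c'\,\Delta_i^{3/2}\cdot\ind\{t_{i-1}\in[a,b]\}$, and summing, $\sum_{i=1}^n\EE[\cdots] \ge c'\sum_{i: t_{i-1}\in[a,b]}\Delta_i^{3/2}$. Finally, by the power-mean (Jensen/Hölder) inequality, $\sum_{i: t_{i-1}\in[a,b]}\Delta_i^{3/2} \ge (b-a)^{3/2}\,m^{-1/2}$ where $m$ is the number of such $i$, and $m \le n$ (using~\eqref{maxDistanceOfTi} to ensure $[a,b]$ is covered by $O(n)$ intervals and that at least a fixed fraction of mass of $[a,b]$ is caught), so $\sum_i\EE[\cdots]\gtrsim n^{-1/2}$. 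Then Corollary~\ref{cor:localCoupling} gives $\EE[|Y_1-\tY_1|^2]\gtrsim n^{-1/2}$, the bi-Lipschitz equivalence gives $\EE[|X_1-\tX_1|^2]\gtrsim n^{-1/2}$, and Lemma~\ref{lemma:lowerBound:infBoundedByTildeDistance} yields the claimed $\inf_{t_i,g}(\EE[|X_1-g(W_{t_1},\dots,W_{t_n})|^2])^{1/2}\ge c n^{-3/4}$.

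The main obstacle is the local lower bound $h(\Delta,y) \ge c\,\Delta^{3/2}$ with the correct power $3/2$, and in particular showing the drift-mismatch term (of order $\Delta^{3/4}$ in $L^2$) is not cancelled by the martingale term. The clean way is: condition on the whole pair of driving Brownian paths' occupation-time data insufficient — instead, exploit that $(V,\tV)$ being independent means $(Y, \tY)$ is a degenerate $2$-dimensional diffusion whose difference process $D_s = Y_s - \tY_s$ satisfies $dD_s = (\tmu(Y_s)-\tmu(\tY_s))\,ds + \tsigma(Y_s)\,dV_s - \tsigma(\tY_s)\,d\tV_s$; restrict attention to the event that both $Y$ and $\tY$ stay in $B_\delta(\zeta)$ on $[0,\Delta]$ and that their occupation times of $(\zeta,\infty)$ differ by at least $c\Delta^{3/4}$ with a definite sign — this event has probability bounded below uniformly in small $\Delta$ by Brownian scaling — and on that event bound the drift contribution from below while controlling the martingale part's conditional variance. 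This localization-plus-scaling argument, analogous to Lemma~11 in~\cite{ELL24} and the occupation-time estimates there, is the technical heart; the density-propagation input (reach jump $\Rightarrow$ uniform lower bound on $\PP^{Y_{t_{i-1}}}$ near $\zeta$ for $t_{i-1}$ in a fixed subinterval) is the second nontrivial ingredient but is by now standard.
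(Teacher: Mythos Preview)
Your proposal contains a fundamental contradiction that breaks the argument. You state that under (transform) the coefficient $\tmu$ is Lipschitz, and in the same breath claim that (jump3) yields $\tmu(\zeta+)\ne\tmu(\zeta-)$. A Lipschitz function is continuous, so this is impossible: the transformation $G$ in (transform) is constructed precisely to \emph{remove} the discontinuity, and the SDE~\eqref{sde1} for $Y$ has no jump to exploit. Consequently your occupation-time heuristic for $h(\Delta,y)$ has no content at the level of $Y$; with Lipschitz $\tmu,\tsigma$ the local coupling distance can be $O(\Delta^3)$ or smaller depending on higher regularity, and no lower bound of the form $c\Delta^{3/2}$ follows. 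The paper resolves this by first applying a \emph{separate} Lamperti-type transform $H(x)=\int_0^x(\sigma^\ast(z))^{-1}\,dz$, which reduces to $\sigma\equiv 1$ on $[\xi-\delta,\xi+\delta]$ while \emph{preserving} the jump of the drift (now $\mu^H=\mu/\sigma-\sigma'/2$ near the new jump point, and (jump3) is exactly the statement that this jumps). One checks that the Lamperti-transformed problem still satisfies (transform) with $G^Z=G\circ H^{-1}$, so the coupling machinery applies. The crucial local estimate, Lemma~\ref{lem:lowerBound:coupled}, is then proved not for the Lipschitz process $Y$ but by undoing $G$ and working with the locally additive process $X$ (via a localized surrogate $X^{\ast,x}$), where the jump of $\mu$ is visible and the martingale difference $\int dV-\int d\tV$ vanishes because $V_{\Delta_i}=\tV_{\Delta_i}$.

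Your occupation-time scaling is also wrong, and with it the final rate. For a nondegenerate diffusion started within $\sqrt\Delta$ of the jump point, $L_\Delta/\Delta$ converges to an arcsine-type law, so $\operatorname{Var}(L_\Delta)\asymp\Delta^2$, not $\Delta^{3/2}$. The correct local bound (Lemma~\ref{lem:lowerBound:coupled}) is $h(\Delta,y)\gtrsim\Delta^2$, valid \emph{only} for $y$ in the shrinking window $[\xi-\sqrt\Delta,\xi+\sqrt\Delta]$; for $y$ at fixed positive distance from $\xi$ the process does not reach $\xi$ in time $\Delta$ and no such bound holds. Combined with the density lower bound from (reach jump) one gets $\PP(X_{t_{i-1}}\in[\xi-\sqrt{\Delta_i},\xi+\sqrt{\Delta_i}])\gtrsim\sqrt{\Delta_i}$ on a fixed time window, hence a contribution $\Delta_i^{5/2}$ per interval, and H\"older gives $\sum_i\Delta_i^{5/2}\gtrsim n^{-3/2}$. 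Your claimed $\sum_i\Delta_i^{3/2}\gtrsim n^{-1/2}$ would yield an $L^2$-error lower bound of order $n^{-1/4}$, which contradicts the known upper bound $n^{-3/4}$.
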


Regarding (reach jump), we denote the local density of $X_t$ on $B_\delta(\xi)$ by $p_{X_t}$ which satisfies for all Borel-measurable sets $A \subset \R$
\[
\PP(X_t \in A \cap B_\delta(\xi)) = \int_{A \cap B_\delta(\xi)} p_{X_t}(x) \, dx.
\]
Moreover, we assume that the function
\[
(0,1] \times B_\delta(\xi) \rightarrow \R, \qquad (t,x) \mapsto p_{X_t}(x)
\]
is continuous, cf. Corollary 2 in~\cite{ELL25Density}.

In the next proposition we show Theorem~\ref{thm:lowebound} for the special case $\sigma\vert_{[\xi- \delta, \xi + \delta]} = 1$. The more general statement of Theorem~\ref{thm:lowebound} then follows with a Lamperti-type transformation.

\begin{prop}\label{prop:lowerbound:localAdditive}
	Let $\mu, \sigma\colon \R \rightarrow \R$  be measurable functions. Assume that (jump), (transform) and $\sigma\vert_{[\xi- \delta, \xi + \delta]} = 1$ hold. Let $x_0 \in \mathbb{R}$ and let $X\colon [0,1] \times \Omega \rightarrow \mathbb{R}$ be a strong solution of the SDE~\eqref{sde0} on the time interval $[0,1]$ with initial value $x_0$ and driving Brownian motion $W$ such that (reach jump) is satisfied. Then there exists a constant $c \in (0, \infty)$ such that for all $n \in \mathbb{N}$,
	\begin{flalign*}
		\inf_{\substack{t_1, \dots, t_n \in [0,1] \\g \colon \mathbb{R}^n \rightarrow \mathbb{R} \: measurable}} \big( \EE \bigl[|X_1 - g(W_{t_1}, \dots, W_{t_n})|^2 \bigr] \big)^{1/2}\geq \frac{c}{n^{3 \slash 4}}.
	\end{flalign*} 
\end{prop}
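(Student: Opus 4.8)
The plan is to combine Lemma~\ref{lemma:lowerBound:infBoundedByTildeDistance} and Corollary~\ref{cor:localCoupling} to reduce the lower bound for the approximation error to a sum of local coupling distances, and then to bound each local term from below by analyzing the behavior of the solution of the SDE~\eqref{sde1} near the jump point~$\xi$ when driven by two Brownian bridges that agree at the endpoints of a short interval but are otherwise independent. Concretely, fix a discretization $0 = t_0 < \dots < t_n = 1$ satisfying~\eqref{maxDistanceOfTi} (this suffices by the reduction in Section~\ref{subsect:globalCoupling}). Since $\sigma\vert_{[\xi-\delta,\xi+\delta]} = 1$, the transformed coefficients satisfy $\tsigma\vert_{G([\xi-\delta,\xi+\delta])} = 1$ and $\tmu = \frac{\mu}{\sigma} - \frac{\sigma'}{2}$ on that neighborhood (up to the bi-Lipschitz reparametrization $G$, which on this additive piece can be taken to be the identity), so that the process $Y$ solves an SDE with additive noise and a drift $\tmu$ which is Lipschitz on each side of $\xi$ but jumps at $\xi$ by an amount $\beta := \tmu(\xi+) - \tmu(\xi-) \neq 0$ by (jump3). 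By Lemma~\ref{lemma:lowerBound:infBoundedByTildeDistance} and the bi-Lipschitz equivalence $\EE[|X_1 - \tX_1|^2] \gtrsim \EE[|Y_1 - \tY_1|^2]$, and then by the left inequality in Corollary~\ref{cor:localCoupling}, it is enough to show
\[
\sum_{i=1}^n \EE\bigl[ |Y_{t_i} - \tY^{(Y_{t_{i-1}}, \tV^{(i-1)})}_{t_i - t_{i-1}}|^2 \bigr] \ge \frac{c}{n^{3/2}}
\]
for some $c \in (0,\infty)$ independent of $n$ and of the discretization.

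The heart of the matter is a single-step estimate: on an interval $[t_{i-1}, t_i]$ of length $h := t_i - t_{i-1} \le 2/n$, and conditioning on $Y_{t_{i-1}} = y$, I would show that if $y$ lies within distance $O(\sqrt h)$ of $\xi$, then
\[
\EE\bigl[ |Y^{(y,V^{(i-1)})}_h - \tY^{(y, \tV^{(i-1)})}_h|^2 \bigr] \ge c\, h^2
\]
for a constant $c > 0$ depending only on $\beta$ and the local Lipschitz data. The mechanism is the following: write both processes via their defining integral equations; since the noise is additive and shared only through its endpoint value (the bridges $V^{(i-1)}$ and $\tV^{(i-1)}$ are coupled at $0$ and $h$ but independent in between), the difference $Y^{(y,V^{(i-1)})}_h - \tY^{(y,\tV^{(i-1)})}_h = \int_0^h \bigl(\tmu(Y_s) - \tmu(\tY_s)\bigr)\,ds$, an $O(h)$ quantity whose leading term, when $y$ is near $\xi$, is governed by how much \emph{time} each of the two independent Brownian-bridge-driven paths spends on the upper side $\{>\xi\}$ versus the lower side $\{<\xi\}$ of the jump. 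These two occupation times have a nondegenerate joint law after rescaling by $h$ — this is the standard scaling $Y_{th}^{(\xi, V^{(i-1)})} \approx \xi + \sqrt h\, \beta^{(1)}_t$ with $\beta^{(1)}$ a Brownian bridge on $[0,1]$, independent of the corresponding $\beta^{(2)}$ for $\tY$ — so the difference of occupation times is of order $h$ with a strictly positive variance, and multiplying by the drift jump $\beta$ gives the $h^2$ lower bound. Making this rigorous requires (i) a localization argument to stay inside $B_\delta(\xi)$ with high probability over a short interval, (ii) replacing $\tmu$ by its one-sided affine approximations with error controlled by the side-wise Lipschitz constants, and (iii) a weak-convergence / explicit-variance computation for the rescaled occupation-time difference of two independent Brownian bridges pinned at the same endpoints.

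Finally I would integrate the single-step bound against the law of $Y_{t_{i-1}}$. Here (reach jump) enters: by assumption $p_{X_{t^\ast}}(\xi) > 0$, and by the assumed joint continuity of $(t,x) \mapsto p_{X_t}(x)$ on $(0,1] \times B_\delta(\xi)$, there are $t^\ast/2 \le a < b \le 1$ and $\eta > 0$, $\rho > 0$ with $p_{Y_t}(z) \ge \rho$ for all $t \in [a,b]$ and all $z$ within $\eta$ of $G(\xi)$; hence for every index $i$ with $t_{i-1} \in [a,b]$ we have $\PP\bigl(Y_{t_{i-1}} \in B_{\sqrt{h}}(G(\xi))\bigr) \gtrsim \sqrt h \ge c/\sqrt n$, and combining with the single-step bound $\gtrsim h^2 \asymp 1/n^2$ on that event yields a per-step contribution of order $n^{-1/2}\cdot n^{-2} = n^{-5/2}$. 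Since $\Theta(n)$ of the indices satisfy $t_{i-1}\in[a,b]$ (using~\eqref{maxDistanceOfTi} so that the mesh cannot skip this fixed interval), summing gives $\sum_i \EE[\dots] \gtrsim n \cdot n^{-5/2} = n^{-3/2}$, as required, and the chain of inequalities above converts this into the claimed $n^{-3/4}$ lower bound for the $L^2$-error. The main obstacle is step (iii) together with the uniformity in the single-step estimate: one must show the $h^2$ lower bound with a constant that does not degenerate as the starting point $y$ ranges over the whole $\sqrt h$-window around $\xi$ and as $h \to 0$, which is where the Brownian-bridge rescaling and a careful truncation of the drift error have to be married; the occupation-time variance computation itself, while classical, must be done carefully enough to see that the cross-covariance between the two bridges' occupation times is strictly smaller than each variance, so that the difference is genuinely nondegenerate.
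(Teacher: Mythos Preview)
Your overall architecture matches the paper's proof: reduce via Lemma~\ref{lemma:lowerBound:infBoundedByTildeDistance} and Corollary~\ref{cor:localCoupling} to showing $\sum_{i=1}^n \EE\bigl[|Y_{t_i} - \tY_{t_i-t_{i-1}}^{(Y_{t_{i-1}},\tV^{(i-1)})}|^2\bigr] \gtrsim n^{-3/2}$, prove a single-step lower bound of order $h^2$ when the starting point lies in a $\sqrt h$-window of the jump, then integrate against the density using (reach jump) and sum. The occupation-time mechanism for two Brownian motions coupled at the endpoints of a short interval is also exactly what drives the paper's Lemma~\ref{lem:lowerBound:coupled}.

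However, there is a genuine conceptual error in your single-step argument. You assert that $\tsigma = 1$ on $G([\xi-\delta,\xi+\delta])$ and that $\tmu$ has a jump at $G(\xi)$, so that $Y$ has additive noise and discontinuous drift near $G(\xi)$. This is false. By (transform) the coefficients $\tmu,\tsigma$ are \emph{globally Lipschitz}, so $\tmu$ has no jump anywhere; and on $G([\xi-\delta,\xi+\delta])$ one has $\tsigma = (G'\sigma)\circ G^{-1} = G'\circ G^{-1}$, which equals $1$ only if $G'\equiv 1$ there, and $G$ is the global transform from (transform), not something you may take to be the identity on that interval. Consequently your formula $Y^{(y,V)}_h - \tY^{(y,\tV)}_h = \int_0^h \bigl(\tmu(Y_s)-\tmu(\tY_s)\bigr)\,ds$ is unjustified, and since $\tmu$ is Lipschitz there is no discontinuity to exploit at the level of $Y$. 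The paper's remedy (Lemma~\ref{lem:lowerBound:coupled}) is to pass from the $Y$-coupling back to the $X$-coupling using only the bi-Lipschitz property of $G$, i.e.\ $\EE\bigl[|Y_{t_i} - \tY_{t_i-t_{i-1}}^{(Y_{t_{i-1}},\tV^{(i-1)})}|^2\bigr] \ge c\int \EE\bigl[|G(X^x_h)-G(\tX^x_h)|^2\bigr]\,\PP^{X_{t_{i-1}}}(dx)$, and then to run precisely your occupation-time argument for $X$ (localized and compared to an auxiliary $X^{\ast,x}$ with globally piecewise-Lipschitz drift and $\sigma^\ast = 1$), where $\sigma = 1$ and $\mu$ genuinely jumps at $\xi$. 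Once you relocate the analysis from $Y$ to $X$ in this way, your steps (i)--(iii) go through as in the paper.

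A smaller gap: in your final summation you write $h^2 \asymp 1/n^2$ and add $\Theta(n)$ terms of size $n^{-5/2}$. Condition~\eqref{maxDistanceOfTi} gives only the upper bound $h_i \le 2/n$, not $h_i \gtrsim 1/n$; some $h_i$ may be arbitrarily small. The paper closes this with H\"older: since $\sum_{i\colon t_{i-1}\in[a,b]} h_i \ge b-a-2/n$ and there are at most $n$ such terms, one gets $\sum_i h_i^{5/2} \ge n^{-3/2}\bigl(\sum_i h_i\bigr)^{5/2} \gtrsim n^{-3/2}$.
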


Below, we use the notations of Section~\ref{subsect:globalCoupling} and Section~\ref{subsect:localCoupling}. Since we want to apply Corollary~\ref{cor:localCoupling}, we need lower bounds for the distance between $Y_{t_i}$ and the local coupling $\tY_{t_i - t_{i-1}}^{(Y_{t_{i-1}}, \tV^{(i-1)})}$. A suitable bound for this is shown in the following lemma by localizing the problem.

\begin{lemma}\label{lem:lowerBound:coupled}
	Let the assumptions of Proposition~\ref{prop:lowerbound:localAdditive} hold. Then there exist a constant $c \in (0, \infty)$ and $N \in \N$ such that for all $i \in \{1, \dots, n\}$ it holds
	\begin{flalign*}
		\EE \bigl[|Y_{t_i} - \tY_{t_i - t_{i-1}}^{(Y_{t_{i-1}}, \tV^{(i-1)})}|^2 \bigr] \geq c (t_i - t_{i-1})^2 \cdot \mathbb{P}(X_{t_{i-1}} \in [\xi - \sqrt{t_i - t_{i-1}}, \xi + \sqrt{t_i - t_{i-1}}]),
	\end{flalign*}
	if $n \ge N$.
\end{lemma}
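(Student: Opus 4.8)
The plan is to localize the estimate on the event $\{X_{t_{i-1}} \in [\xi - h, \xi + h]\}$ where $h := \sqrt{t_i - t_{i-1}}$, and on this event compare the solutions $Y^{(Y_{t_{i-1}}, V^{(i-1)})}$ and $\tY^{(Y_{t_{i-1}}, \tV^{(i-1)})}$ of the transformed SDE~\eqref{sde1} driven by the coupled Brownian motions $V^{(i-1)}$ and $\tV^{(i-1)}$. Recall that $\tV^{(i-1)}$ is a Brownian motion that agrees with $V^{(i-1)}$ at the endpoints $0$ and $t_i - t_{i-1}$ of the interval (since $\tW = \overline W + \tB$ and $\overline W$ is piecewise linear) but is otherwise conditionally independent of it given those two values. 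The transformed drift $\tmu$ inherits from (jump) and (transform) a genuine jump at $G(\xi)$ in the sense that $\tmu(G(\xi)+) \ne \tmu(G(\xi)-)$ — this is exactly what ($\alpha 1$)/(jump3) encode after the Lamperti transformation, and in the present proposition $\sigma = 1$ near $\xi$ so $G$ is the identity there and $\tmu$ has a jump of size $|\mu(\xi+) - \mu(\xi-)| > 0$ at $\xi$ itself.

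The key mechanism is the classical lower bound for the Euler scheme near a drift discontinuity, as in the additive-noise arguments behind~\cite{ELL24, MGY23}: conditioned on $Y_{t_{i-1}} = y$ with $y$ in a window of width $\sim h$ around $\xi$, the two coupled trajectories, started at the same point $y$ but driven by the conditionally independent bridge parts, will with positive probability (bounded below independently of $n$, $i$, $y$) spend an order-$h^2$ fraction of the time interval on opposite sides of $\xi$; since the drift differs by a fixed amount $\ge \kappa > 0$ across $\xi$, this forces $|Y_{t_i - t_{i-1}}^{(y,V)} - \tY_{t_i - t_{i-1}}^{(y, \tV)}| \gtrsim h^2 = t_i - t_{i-1}$ on that positive-probability event, after controlling the (Lipschitz, hence small) contributions of the martingale parts and of the drift away from $\xi$ via the stability estimate~\eqref{lemf3_4} and Gronwall. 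Concretely I would: (1) write the conditional expectation $\EE[|Y_{t_i} - \tY_{t_i - t_{i-1}}^{(Y_{t_{i-1}}, \tV^{(i-1)})}|^2 \mid Y_{t_{i-1}} = y]$ and use~\eqref{qx2} together with the Markov property so that it equals $\EE[|Y_{t_i - t_{i-1}}^{(y, V^{(i-1)})} - \tY_{t_i - t_{i-1}}^{(y, \tV^{(i-1)})}|^2]$; (2) for $y \in [\xi - h, \xi + h]$, bound this from below by $c(t_i - t_{i-1})^2$ using the drift-jump argument above; (3) integrate against the law of $Y_{t_{i-1}}$ (equivalently $X_{t_{i-1}}$, since $G$ is the identity near $\xi$), keeping only the contribution of $\{X_{t_{i-1}} \in [\xi - h, \xi + h]\}$.

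The main obstacle is step (2): making the "opposite sides of $\xi$ with positive probability, uniformly in the starting point $y$ within the window" assertion rigorous. One has to handle starting points $y$ that may be very close to or exactly at $\xi$ (where the trajectories could hover near the discontinuity), and the two coupled processes are correlated in a nontrivial way through their shared endpoint values. The cleanest route is to first replace $Y$ and $\tY$ by their one-step Euler approximations $\bar Y_t = y + \tmu(y)\,t + V^{(i-1)}_t$ and $\widetilde{\bar Y}_t = y + \tmu(y)\,t + \tV^{(i-1)}_t$ (valid up to an $O(h^3)$ error in second moment by the Lipschitz stability of~\eqref{sde1} away from $\xi$ and an $L^2$ estimate for the discontinuous-drift case — the drift is bounded, so its contribution to $Y_t - \bar Y_t$ is $O(h^2)$ pointwise), reducing the problem to an explicit statement about a Brownian motion and its conditionally independent bridge sharing endpoints, where the occupation-time-near-$\xi$ and the resulting drift-displacement estimate can be computed by hand. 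I would isolate this explicit Brownian computation as a separate sublemma before assembling the pieces. Throughout, $c$, $N$ may change from line to line and all constants are to be checked independent of $n$ and $i$, using that $t_i - t_{i-1} \le 2/n$ from~\eqref{maxDistanceOfTi}.
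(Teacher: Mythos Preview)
Your overall architecture (condition on the starting point, restrict to a window of width $\sqrt{t_i-t_{i-1}}$ around $\xi$, exploit the drift discontinuity to produce an order-$(t_i-t_{i-1})$ endpoint displacement with positive probability, then integrate) is exactly right. But there is a genuine conceptual gap in how you set it up, and it would make step~(2) fail as written.

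You assert that ``$\tmu$ inherits \ldots\ a genuine jump at $G(\xi)$'' and that ``$\sigma=1$ near $\xi$ so $G$ is the identity there.'' Neither is correct. By the very definition of (transform), $\tmu$ and $\tsigma$ are globally \emph{Lipschitz continuous}; in particular $\tmu$ has no jump anywhere. The transformation $G$ is not a Lamperti transform (which would indeed be the identity when $\sigma=1$) but a Zvonkin-type transform whose second weak derivative $D^2G$ jumps at $\xi$ precisely so as to absorb the jump of $\mu$ into $\tmu=(G'\mu+\tfrac12 D^2G\,\sigma^2)\circ G^{-1}$. Consequently, in $Y$-coordinates there is no discontinuity to exploit, $\tsigma=(G'\sigma)\circ G^{-1}$ is not constant near $G(\xi)$, and your proposed Euler surrogate $\bar Y_t=y+\tmu(y)\,t+V^{(i-1)}_t$ has the wrong diffusion coefficient and carries no jump information. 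A drift-jump argument in $Y$-coordinates therefore cannot yield the needed $(t_i-t_{i-1})^2$ lower bound.

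The fix---and this is what the paper does---is to first pass back to $X$-coordinates via the bi-Lipschitz $G$, i.e.\ bound $\EE[|Y_{t_i}-\tY^{(Y_{t_{i-1}},\tV^{(i-1)})}_{t_i-t_{i-1}}|^2]$ from below by a constant times $\EE[|X^x_{t_i-t_{i-1}}-\tX^x_{t_i-t_{i-1}}|^2]$ for $x$ in the window. On a localised event (via stopping times keeping the paths inside $B_\delta(\xi)$) one has $\sigma\equiv 1$, so because $V^{(i-1)}_{t_i-t_{i-1}}=\tV^{(i-1)}_{t_i-t_{i-1}}$ the stochastic-integral parts cancel \emph{exactly} at the endpoint and the difference is a pure drift integral. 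Decomposing $\mu=\gamma_1^\ast 1_{[\xi,\infty)}+\mu_{\mathrm{Lip}}^\ast$ near $\xi$ with $\gamma_1^\ast\neq 0$, the Lipschitz part contributes $O((t_i-t_{i-1})^{3})$, and after replacing $X^{\ast,x}_s$ by $x+V^{(i-1)}_s$ inside the indicator (an $O((t_i-t_{i-1})^{9/4})$ error), one is reduced to the explicit Brownian-bridge occupation estimate $\EE\bigl[|\int_0^{t_i-t_{i-1}} 1_{[\xi,\infty)}(x+V^{(i-1)}_s)-1_{[\xi,\infty)}(x+\tV^{(i-1)}_s)\,ds|^2\bigr]\gtrsim (t_i-t_{i-1})^2$, which is Lemma~3 in \cite{MGY23}. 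Your ``separate sublemma'' is exactly this last step, but you reach it only after undoing $G$, not before.
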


\begin{proof}
	Let $i \in \{1, \dots, n\}$. Throughout this proof let $c_1,c_2,\dots\in (0,\infty)$ denote positive constants, which neither depend on $n$ nor on $i$.
	
	The main idea of this proof is to use that the solution $X$ behaves locally as the solution of an SDE with piecewise Lipschitz continuous coefficients if the starting value of the SDE is close to the jump position $\xi$. The claim will then follow with already known results for the approximation of such regular SDEs.
	
	Note because of $\sigma\vert_{[\xi- \delta, \xi + \delta]} = 1$, (jump1) and (jump3) there exist $\gamma_1^\ast, \gamma_2^\ast \in \R$ with $\gamma_1^\ast \neq 0$ and a Lipschitz continuous function $\mu_{Lip}^\ast \colon \R \rightarrow \R$ such that
	\begin{flalign*}
		\mu(x) = \gamma_1^\ast 1_{[\xi, \infty)}(x) + \gamma_2^\ast 1_{\{\xi\}}(x) + \mu_{Lip}^\ast(x), \qquad \qquad  x \in [\xi - \delta, \xi + \delta],
	\end{flalign*}
	see Lemma 1 in~\cite{ELL24}. Set $\mu^\ast := \gamma_1^\ast 1_{[\xi, \infty)} + \gamma_2^\ast 1_{\{\xi\}} + \mu_{Lip}^\ast$.
	
	Let $x \in [\xi - \sqrt{t_i - t_{i-1}}, \xi + \sqrt{t_i - t_{i-1}}]$ and let $X^{\ast, x}, \tX^{\ast, x}$ denote solutions of the SDE with drift coefficient $\mu^\ast$, diffusion coefficient $\sigma^\ast = 1_\R$, initial value $x$ and driving Brownian motion $V^{(i-1)}$ and $\tV^{(i-1)}$, respectively. Analogously, let $X^{x}, \tX^{x}$ denote strong solutions of the SDE~\eqref{sde0} on the time interval $[0, t_i - t_{i-1}]$ with initial value $x$ and driving Brownian motion $V^{(i-1)}$ and $\tV^{(i-1)}$, respectively. Now we show that $X^x$ and $X^{\ast, x}$ as well as $\tX^x$ and $\tX^{\ast, x}$ coincide on some small time interval. Therefore, we define for $I := (\xi - (t_i - t_{i-1})^{1/4}, \xi + (t_i - t_{i-1})^{1/4})$ stopping times 
	\begin{equation*}
		\begin{aligned}
			\tau^x &:= \inf\{s \in [0, t_i - t_{i-1}] \colon X^x_s \notin I\} \wedge \inf\{s \in [0, t_i - t_{i-1}] \colon X^{\ast, x}_s \notin I\} \wedge (t_i - t_{i-1}), \\
			\ttau^x &:= \inf\{s \in [0, t_i - t_{i-1}] \colon \tX^x_s \notin I\} \wedge \inf\{s \in [0, t_i - t_{i-1}] \colon \tX^{\ast, x}_s \notin I\} \wedge (t_i - t_{i-1}).
		\end{aligned}
	\end{equation*} 
	Assume that $(2/n)^{1/4} < \delta$. With similar arguments as in the proofs of Lemma 1 and Lemma~2 in~\cite{MGY19b} there exists a bi-Lipschitz continuous function $G^\ast\colon \R \rightarrow \R$ with Lipschitz continuous derivative $(G^\ast)'$ such that the transformed coefficients $\widetilde{\mu^\ast}=  \bigl((G^\ast)' \mu^\ast  + \frac{1}{2} D^2G^\ast \cdot (\sigma^\ast)^2\bigr) \circ (G^\ast)^{-1}$ and $ \widetilde{\sigma^\ast}= \bigl((G^\ast)' \sigma^\ast \bigr) \circ (G^\ast)^{-1}$ are Lipschitz continuous where $D^2G^\ast$ is a weak derivative of $(G^\ast)'$. Due to \eqref{maxDistanceOfTi} and $(2/n)^{1/4} < \delta$ we have $I \subset B_\delta(\xi)$. Hence, Lemma~\ref{lem:localJump:stopped} yields that it holds $\PP$-almost surely for all $t \in [0,1]$
	\begin{equation}\label{eqn:lem1:1}
	X^x(t \wedge \tau^x) = X^{\ast, x}(t \wedge \tau^x) \qquad \text{and} \qquad \tX^x(t \wedge \ttau^x) = \tX^{\ast, x}(t \wedge \ttau^x).
	\end{equation}

	We are ready to prove the claim now. It holds due to \eqref{qx2}, (transform) and Lemma~\ref{lemma:existenceOfFunctionF}
	\begin{equation}\label{eqn:lem1:3}
		\begin{aligned}
			\EE \bigl[|Y_{t_i} - \tY_{t_i - t_{i-1}}^{(Y_{t_{i-1}}, \tV^{(i-1)})}|^2 \bigr] \geq \int_{\xi - \sqrt{t_i - t_{i-1}}}^{\xi + \sqrt{t_i - t_{i-1}}} 	\EE \bigl[|G(X^x_{t_i - t_{i-1}}) - G(\tX^x_{t_i - t_{i-1}})|^2 \bigr] \, \PP^{X_{t_{i-1}}}(dx).
		\end{aligned}
	\end{equation}	
	Using (transform), \eqref{eqn:lem1:1}, $\sigma^\ast = 1$ and the facts that $X^{\ast,x}_t$ and $\tX^{\ast, x}_t$ have densities for $t \in (0, t_i - t_{i-1}]$ as well as $V^{(i-1)}_{t_i - t_{i-1}} = W_{t_i} - W_{t_{i-1}} = \tV^{(i-1)}_{t_i - t_{i-1}}$, we obtain similarly to~\cite{ELL24}
	\begin{equation}\label{eqn:lem1:4}
		\begin{aligned}
			& \EE \bigl[|G(X^x_{t_i - t_{i-1}}) - G(\tX^x_{t_i - t_{i-1}})|^2 \bigr] \\
			&  \geq c_1 \EE \bigl[1_{\{\tau^x \wedge \ttau^x = t_i  - t_{i-1}\}}|X_{t_i - t_{i-1}}^{\ast, x} - \tX_{t_i - t_{i-1}}^{\ast, x}|^2 \bigr] \\
			& = c_1 \EE \big[1_{\{\tau^x \wedge \ttau^x = t_i  - t_{i-1}\}}|\gamma_1^\ast \int_0^{t_i - t_{i-1}} 1_{[\xi, \infty)}(X^{\ast, x}_s) - 1_{[\xi, \infty)}(\tX^{\ast, x}_s) \, ds \\
			& \qquad + \int_0^{t_i - t_{i-1}} \mu_{Lip}^\ast(X^{\ast, x}_s) - \mu_{Lip}^\ast(\tX^{\ast, x}_s) \, ds|^2 \big].
		\end{aligned}
	\end{equation}
	Now the part with the Lipschitz continuous function can be handled by
	\begin{equation}\label{eqn:lem1:5}
		\begin{aligned}
			&\EE \bigl[|\int_0^{t_i - t_{i-1}} \mu_{Lip}^\ast(X^{\ast, x}_s) - \mu_{Lip}^\ast(\tX^{\ast, x}_s) \, ds|^2 \bigr] \\
			& \qquad \qquad \leq c_2(t_i - t_{i-1}) \int_0^{t_i - t_{i-1}} \EE \bigl[|X_s^{\ast, x} - \tX_s^{\ast, x}|^2 \bigr] \, ds \leq c_3 (t_i - t_{i-1})^3.
		\end{aligned}
	\end{equation}
	Moreover, note that we have similar to the proof of Lemma 14 in~\cite{MGY23} for $p = 5$
	\begin{equation}\label{eqn:lem1:6}
		\begin{aligned}
			&\EE \bigl[|\int_0^{t_i - t_{i-1}} 1_{[\xi, \infty)}(X^{\ast, x}_s) - 1_{[\xi, \infty)}(x + V^{(i-1)}_s) \, ds|^2 \bigr] \\
			& \qquad \qquad  \leq c_4(t_i - t_{i-1}) \int_0^{t_i - t_{i-1}} \mathbb{P}(|x + V^{(i-1)}_s| \leq (t_i - t_{i-1})^{3/4}) \\
			& \qquad \qquad \qquad \qquad \qquad \qquad \qquad \qquad   + \mathbb{P}((t_i - t_{i-1})^{3/4} \leq |x + V^{(i-1)}_s - X^{\ast, x}_s |) \, ds \\
			&\qquad \qquad \leq c_5(t_i - t_{i-1}) \int_0^{t_i - t_{i-1}} \frac{(t_i - t_{i-1})^{3/4}}{\sqrt{s}} + (t_i - t_{i-1})^{-3p/4}\EE \bigl[|x + V^{(i-1)}_s - X^{\ast, x}_s |^p \bigr] \, ds \\
			&\qquad \qquad \leq c_6 (t_i - t_{i-1})((t_i - t_{i-1})^{5/4} + (t_i - t_{i-1})^{-3p/4 + p}) \leq c_{7} (t_i - t_{i-1})^{2 + 1/4}.
		\end{aligned} 
	\end{equation}
	Since for all $p \in [1, \infty)$ there exists a $C^{(p)} \in (0, \infty)$ such that for $Z^x \in \{X^x, \tX^x, X^{\ast, x}, \tX^{\ast, x}\}$ it holds
	\begin{equation*}
		\begin{aligned}
			&\PP(\inf\{s \in [0, t_i - t_{i-1}] \colon Z^x_s \notin I\} < t_i - t_{i-1}) \\
			&\qquad \qquad \leq \PP(\sup_{s \in [0, t_i - t_{i-1}]}|Z_s^x - \xi| \ge (t_i - t_{i-1})^{1/4}) \\
			&\qquad \qquad \leq (t_i - t_{i-1})^{-p/4} \cdot 2^p \cdot \bigl(\EE \bigl[\sup_{s \in [0, t_i - t_{i-1}]}|Z_s^x - x|^p \bigr] + |x - \xi|^p \bigr) \\
			& \qquad \qquad \leq C^{(p)}(t_i - t_{i-1})^{p/4}
		\end{aligned}
	\end{equation*}
	and thus
	\[
	\PP(\tau^x \wedge \ttau^x < t_i  - t_{i-1}) \le 4C^{(p)}(t_i - t_{i-1})^{p/4},
	\]
	it suffices in consideration of \eqref{maxDistanceOfTi}, \eqref{eqn:lem1:3}, \eqref{eqn:lem1:4}, \eqref{eqn:lem1:5}, \eqref{eqn:lem1:6} to show 
	\begin{equation}\label{eqn:lem1:2}
		\EE \bigl[|\int_0^{t_i - t_{i-1}} 1_{[\xi, \infty)}(x + V^{(i-1)}_s) - 1_{[\xi, \infty)}(x + \tV^{(i-1)}_s) \, ds|^2 \bigr] \geq c_{8}(t_i - t_{i-1})^2.
	\end{equation}
	Using
	\begin{equation*}
	\begin{aligned}
		&\EE \bigl[|\int_0^{t_i - t_{i-1}} 1_{[\xi, \infty)}(x + V^{(i-1)}_s) - 1_{[\xi, \infty)}(x + \tV^{(i-1)}_s) \, ds|^2 \bigr] \\
		&\qquad \qquad = \EE \big[|\int_0^{t_i - t_{i-1}} 1_{[\xi - x, \infty)}(\frac{s}{t_i - t_{i-1}} (W_{t_i} - W_{t_{i-1}}) + B_{t_{i-1} + s}) \\
		&\qquad \qquad \qquad \qquad - 1_{[\xi - x, \infty)}(\frac{s}{t_i - t_{i-1}} (W_{t_i} - W_{t_{i-1}}) + \tB_{t_{i-1} + s}) \, ds|^2\big],
	\end{aligned}
	\end{equation*}
	the bound in \eqref{eqn:lem1:2} follows similar to Lemma 3 in~\cite{MGY23}.
\end{proof}

Now we are able to prove Proposition~\ref{prop:lowerbound:localAdditive}.

\begin{proof}[Proof of Proposition~\ref{prop:lowerbound:localAdditive}]
	Let $c_1,c_2,\dots\in (0,\infty)$ denote positive constants, which do not depend on $n$. 
	
	It holds by Corollary~\ref{cor:localCoupling} and Lemma~\ref{lem:lowerBound:coupled} for all sufficiently large $n \in \N$
	\begin{equation}\label{eqn:thm1:1}
		\EE \bigl[|Y_1 - \tY_1|^2 \bigr] \ge c_1 \sum_{i=1}^n (t_i - t_{i-1})^2 \cdot \mathbb{P}(X_{t_{i-1}} \in [\xi - \sqrt{t_i - t_{i-1}}, \xi + \sqrt{t_i - t_{i-1}}]).
	\end{equation}
	Note that by (jump), (reach jump) and Corollary~2 in~\cite{ELL25Density} there exist $c^\ast, \delta^\ast \in (0, \infty)$ and $t_0^\ast, t_1^\ast \in (0,1)$ with $t_0^\ast < t_1^\ast$ such that 
	\begin{equation*}
		p_{X_t}(x) \ge c^\ast, \qquad t \in [t_0^\ast, t_1^\ast], \, x \in B_{\delta^\ast}(\xi).
	\end{equation*}
	Therefore it holds with \eqref{eqn:thm1:1} for all sufficiently large $n \in \N$
	\begin{equation}\label{eqn:thm1:2}
		\EE \bigl[|Y_1 - \tY_1|^2 \bigr] \ge c_1 \sum_{\substack{i \in \{1, \dots, n\} \\ t_0^\ast \le t_{i-1} \le t_1^\ast}} c^\ast (t_i - t_{i-1})^{5/2}.
	\end{equation}
	Now we have by \eqref{maxDistanceOfTi} and the H\"{o}lder inequality, similar to~\cite{MGY23},
	\[
	(t_1^\ast - t_0^\ast - \frac{2}{n}) \le \sum_{\substack{i \in \{1, \dots, n\} \\ t_0^\ast \le t_{i-1} \le t_1^\ast}} (t_i - t_{i-1}) \le n^{3/5} \cdot \Big( \sum_{\substack{i \in \{1, \dots, n\} \\ t_0^\ast \le t_{i-1} \le t_1^\ast}} (t_i - t_{i-1})^{5/2} \Big)^{2/5}
	\]
	and therefore we obtain with \eqref{eqn:thm1:2} for sufficiently large $n \in \N$
	\[
	\EE \bigl[|Y_1 - \tY_1|^2 \bigr] \ge c_2 n^{-3/2}.
	\]
	The claim now follows with Lemma~\ref{lemma:lowerBound:infBoundedByTildeDistance}, the bi-Lipschitz continuity of $G$ and the choice of $Y, \tY$.
\end{proof}

With Proposition~\ref{prop:lowerbound:localAdditive} and a suitable transformation, Theorem~\ref{thm:lowebound} can be shown now.

\begin{proof}[Proof of Theorem~\ref{thm:lowebound}]
	For the proof of the theorem, we transform the solution $X$ similar to~\cite{ELL25Density} with a local Lamperti-type transform to a solution of an SDE that satisfies the assumptions of Proposition~\ref{prop:lowerbound:localAdditive}. 
	
	The Lamperti-type transform is defined by
	\[
	H\colon \R \rightarrow \R, \qquad x \mapsto \int_0^x \frac{1}{\sigma^\ast(z)} \, dz,
	\]
	where $\sigma^\ast\colon \R \rightarrow \R$ is the constant continuation of $\sigma\vert_{[\xi-\delta, \xi+\delta]}$ given by
	\[
	\sigma^\ast = \sigma(\xi - \delta) 1_{(-\infty, \xi - \delta)} + \sigma 1_{[\xi-\delta, \xi + \delta]} + \sigma(\xi + \delta) 1_{(\xi + \delta, \infty)}.
	\]
	Since $\inf_{x \in B_\delta(\xi)} |\sigma(x)| > 0$ and  $\sigma$ is Lipschitz continuous on $[\xi - \delta, \xi + \delta]$, $H$ is bi-Lipschitz continuous and strictly monotonic. Moreover, by (jump2) $H' = \frac{1}{\sigma^\ast}$ is absolutely continuous and
	\begin{equation}\label{eqn:thm1:3}
		H''(x) = \frac{- (\sigma^\ast)'(x)}{(\sigma^\ast)^2(x)} = 1_{B_\delta(\xi)}(x) \cdot \frac{- \sigma'(x)}{\sigma^2(x)}, \qquad x \in \R \setminus \{\xi-\delta, \xi, \xi+\delta\}.
	\end{equation}
	So by a generalized It\^{o} formula, see e.g.~\cite[Problem 3.7.3]{ks91}, the transformed process $Z = H(X)$ is a strong solution of the SDE 
	\begin{equation*}
		\begin{aligned}
			dZ_t & = \mu^H(Z_t) \, dt +  \sigma^H(Z_t) \, dW_t, \quad t\in [0,1],\\
			Z_0 & = H(x_0),
		\end{aligned}
	\end{equation*}
	with $\mu^H = \bigl(H'\mu + \frac{1}{2}D^2H \cdot \sigma^2 \bigr) \circ H^{-1}$ and $\sigma^H = \bigl( H' \sigma\bigr) \circ H^{-1}$ where $D^2H = -1_{B_\delta(\xi) \setminus \{\xi\}} \frac{\sigma'}{\sigma^2}$ is a weak derivative of $H'$ due to~\eqref{eqn:thm1:3}. It remains to show that all assumptions which are needed for Proposition~\ref{prop:lowerbound:localAdditive} are satisfied by $Z$. By the strict monotonicity and continuity of $H$, there exists for $\xi^H = H(\xi)$ a $\delta^H \in (0, \infty)$ such that $[\xi^H - \delta^H, \xi^H + \delta^H] \subsetneq H([\xi-\delta, \xi+\delta])$ and it holds $\sigma^H(y)=\frac{\sigma}{\sigma^\ast} \circ H^{-1}(y)= 1$ for all $y \in [\xi^H - \delta^H, \xi^H+\delta^H]$.
	
	We continue with the verification of the assumption (jump).
	By (jump1), (jump2), \eqref{eqn:thm1:3} and the bi-Lipschitz continuity of $H$, $\mu^H$ is Lipschitz continuous on $[\xi^H - \delta^H, \xi^H)$ and on $(\xi^H, \xi^H + \delta^H]$ and hence $\mu^H$ satisfies (jump1). Since $\sigma^H\vert_{[\xi^H - \delta^H, \xi^H+\delta^H]} = 1$ also (jump2) holds for $\sigma^H$. Using that (jump1) and (jump2) are satisfied for $\mu^H$ and $\sigma^H$ together with (jump3) and \eqref{eqn:thm1:3} yields when $\sigma^\ast > 0$
	\begin{equation*}
		\begin{aligned}
			\bigl(\frac{\mu^H}{\sigma^H} - \frac{(\sigma^H)'}{2}\bigr)(\xi^H+) &= \mu^H(\xi^H+) = \big(H'\mu + \frac{1}{2}D^2H \cdot \sigma^2\big) (\xi+) = \big(\frac{\mu}{\sigma} - \frac{\sigma'}{2}\big) (\xi+) \\
			& \neq \big(\frac{\mu}{\sigma} - \frac{\sigma'}{2}\big) (\xi-) =  \bigl(\frac{\mu^H}{\sigma^H} - \frac{(\sigma^H)'}{2}\bigr)(\xi^H-)
		\end{aligned}
	\end{equation*}
	and analogously when $\sigma^\ast < 0$
	\[
	\bigl(\frac{\mu^H}{\sigma^H} - \frac{(\sigma^H)'}{2}\bigr)(\xi^H+) = \big(\frac{\mu}{\sigma} - \frac{\sigma'}{2}\big) (\xi-) \neq \big(\frac{\mu}{\sigma} - \frac{\sigma'}{2}\big) (\xi+) =  \bigl(\frac{\mu^H}{\sigma^H} - \frac{(\sigma^H)'}{2}\bigr)(\xi^H-).
	\]
	Thus, (jump3) holds for $\mu^H$ and $\sigma^H$. 
	
	Next, we show that $Z$ satisfies (reach jump). Using integration by substitution we have 
	\[
	p_{Z_{t^\ast}}(\xi^H) = p_{X_{t^\ast}}(H^{-1}(\xi^H)) \cdot |(H^{-1})'(\xi^H)| =  p_{X_{t^\ast}}(\xi) \cdot |\frac{1}{H'(\xi)}| > 0
	\]
	and hence (reach jump) holds for $Z$.
	
	Finally, we show that (transform) holds with $G^Z = G \circ H^{-1}$. Since $G$ and $H$ are bi-Lipschitz continuous, $G^Z$ is also bi-Lipschitz continuous. Note that since $G', H'$ are bounded absolutely continuous functions also $(G^Z)' = \frac{G'}{H'} \circ H^{-1}$ is absolutely continuous and $D^2G^Z = \frac{D^2G \cdot H' - G' \cdot D^2H}{(H')^3} \circ H^{-1}$ is a weak derivative of $(G^Z)'$. Elementary calculations can be used to show that for the transformed coefficients it holds
	\[
	\bigl((G^Z)'\mu^H + \frac{1}{2}D^2G^Z \cdot (\sigma^H)^2 \bigr) \circ (G^Z)^{-1} = \tmu \qquad \text{and} \qquad \bigl( (G^Z)' \sigma^H\bigr) \circ (G^Z)^{-1} = \tsigma.
	\]

	Hence, (transform) holds for $Z$ and so $Z$ satisfies all assumptions of Proposition~\ref{prop:lowerbound:localAdditive}. Therefore, it holds for a constant $c_1 \in (0,\infty)$, which is independent of $n \in N$,
	\begin{flalign*}
		\inf_{\substack{t_1, \dots, t_n \in [0,1] \\g \colon \mathbb{R}^n \rightarrow \mathbb{R} \: measurable}} \EE \bigl[|Z_1 - g(W_{t_1}, \dots, W_{t_n})|^2 \bigr]\geq \frac{c_1}{n^{3 \slash 4}}.
	\end{flalign*} 
	The claim now follows since $Z = H(X)$ and since $H$ is bi-Lipschitz continuous.

\end{proof}

\section{Global approximation}\label{sect:globalApprox}

In this section we prove Theorem~\ref{thm:nonadapt} and Theorem~\ref{thm:nonadapt:auton}. For this we show that for global approximations the best possible $L^p$-error rate that can be achieved by any adaptive method is at most $1/2$ under the assumptions of the theorems. First, we introduce the class of adaptive methods and thereafter we show the lower bounds.

\subsection{The class of adaptive algorithms}

Instead of studying lower bounds for methods based on finitely many evaluations of the Brownian motion $W$ as in Theorem~\ref{thm:nonadapt} and Theorem~\ref{thm:nonadapt:auton}, we later consider more general methods where the evaluation points of the Brownian motion can be chosen adaptively. As in Section 4 in~\cite{hhmg2019}, we consider sequences 
\begin{equation*}
	\psi = (\psi_k)_{k \in \N}, \qquad \chi = (\chi_k)_{k \in \N}, \qquad \varphi =  (\varphi_k)_{k \in \N}
\end{equation*}
of measurable functions
\begin{equation*}
	\begin{aligned}
		&\psi_k\colon \R^{k}\rightarrow [0,T],\\
		& \chi_k\colon \R^{k+1} \rightarrow \{\text{STOP}, \text{GO}\},\\
		& \varphi_k\colon \R^{k+1} \rightarrow L^1([0,T]).
	\end{aligned}
\end{equation*}
Here, $\psi$ is used to determine the evaluation points of $W$ and $\chi$ specifies when the evaluation of $W$ is stopped. If no further evaluations of $W$ are carried out, $\varphi$ is used to obtain the result of the approximation method.

To get a better idea of such approximation methods, we consider a realization $x_0 \in \R$ of $X_0$ and a path $w \in C([0,T])$ of $W$. In the first step, $w$ is evaluated at the point $\psi_1(x_0)$ and for $k \in \N$ we write $D_k(x_0, w) = (x_0, y_1, \dots, y_k)$, where $y_1 = w(\psi_1(x_0))$ and $y_k = w(\psi_k(D_{k-1}(x_0, w)))$, for the already observed data of $w$. Depending on $\chi_k(D_k(x_0, w))$, then further
evaluations of $w$ are carried out or not. The total number of evaluations of $w$ is then given by
\[
\nu(x_0, w) = \inf \{k \in \N \colon \chi_k(D_k(x_0, w)) = \text{STOP}\}.
\]
We require that $\nu(x_0,w) < \infty$ holds for $\PP^{(X_0, W)}$-almost all $(x_0, w) \in \R \times C([0,T])$. The approximation method is then given by
\[
\widehat{X} = \varphi_{\nu(X_0, W)}(D_{\nu(X_0, W)}(X_0, W))
\]
and for its cost we write
\[
c(\widehat{X}) = \EE[\nu(X_0, W)].
\]
We denote the class of all methods of the above form by $\mathcal{A}^{adapt}(L^1([0,T]), X_0, W)$ and we write for the class of adaptive methods with a cost of at most $n \in \N$ 
\[
\mathcal{A}^{adapt}_n(L^1([0,T]), X_0, W) = \{ \widehat{X} \in \mathcal{A}^{adapt}(L^1([0,T]), X_0, W)\colon c(\widehat{X}) \le n\}.
\]

\subsection{Proof of Theorem~\ref{thm:nonadapt}}

In the following, instead of Theorem~\ref{thm:nonadapt}, we show the more general statement of the following theorem.

	\begin{theorem}\label{thm:adapt}
		Let $T \in (0, \infty)$, $\mu, \sigma\colon [0,T] \times \R \rightarrow \R$ be measurable functions and let $X\colon [0, T] \times \Omega \rightarrow \R$ be an adapted process with continuous paths such that 
		\begin{equation}\label{sde2}
			X_t = X_0 + \int_0^t \mu(s, X_s) \, ds + \int_0^t \sigma(s, X_s) \, dW_s, \qquad t \in [0,T].
		\end{equation}
		Assume that there exist $t_0 \in [0, T), T_0 \in (t_0, T], \delta \in (0, \infty)$ and $\xi \in \R$ such that (local Lip), (non-deg) and (reach) from Theorem~\ref{thm:nonadapt} hold.
		
		Then there exists a constant $c \in (0, \infty)$ such that for all $n \in \mathbb{N}$,
		\[
		\inf_{\widehat X^n \in \mathcal{A}^{adapt}_n(L^1([0,T]), X_0, W)} \EE\big[ \|X - \widehat X^n\|_{L^1([0,T])}\big] \ge \frac{c}{n^{1/2}}.
		\]
	\end{theorem}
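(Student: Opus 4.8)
The plan is to transport the global-approximation instance of the local coupling of noise technique from Section~\ref{sect:intro:cplOfNoise} to the adaptive setting, combining it with the conditioning argument for adaptive algorithms of Section~4 in~\cite{hhmg2019}.

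\emph{Localisation.} First I would remove the fact that (local Lip), (non-deg) hold only on $[t_0,T_0]\times[\xi-\delta,\xi+\delta]$. Using McShane-type extensions one obtains globally Lipschitz $\overline\mu,\overline\sigma\colon[0,T]\times\R\to\R$ with $\inf_{[0,T]\times\R}|\overline\sigma|>0$ that coincide with $\mu,\sigma$ on $[t_0,T_0]\times[\xi-\delta,\xi+\delta]$, and one lets $Y$ be the strong solution of $dY_t=\overline\mu(t,Y_t)\,dt+\overline\sigma(t,Y_t)\,dW_t$ on $[t_0,T]$ with $Y_{t_0}=X_{t_0}$; since the coefficients are globally Lipschitz, $Y$ is a Borel functional of $(X_{t_0},W)$ with all moments finite and the usual stability properties. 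Stopping $X$ at the first exit time from $\overline{B_\delta(\xi)}$, a Burkholder/Markov estimate together with (reach) (after, if necessary, shrinking $\delta$ and shifting $\xi$ so that the relevant closed ball is compactly contained and carries positive mass of $X_{t_0}$) yields $T_1\in(t_0,T_0]$ and $p_0>0$ such that $A:=\{Y_t\in\overline{B_{\delta/2}(\xi)}\text{ for all }t\in[t_0,T_1]\}$ has $\PP(A)\ge p_0$; by pathwise uniqueness for $(\overline\mu,\overline\sigma)$ and the coincidence of the coefficients on $[t_0,T_0]\times[\xi-\delta,\xi+\delta]$ one checks that $A=\{X_t\in\overline{B_{\delta/2}(\xi)}\text{ for all }t\in[t_0,T_1]\}$ and that $X=Y$ on $[t_0,T_1]$ on $A$. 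I also fix, once and for all, a small constant $\epsilon_1\in(0,\min(\delta,T_1-t_0))$ depending only on the Lipschitz constants of $\overline\mu,\overline\sigma$ and on $\inf|\overline\sigma|$. It follows that $\EE[\|X-\widehat X\|_{L^1([0,T])}]\ge\EE[\mathbf 1_A\|Y-\widehat X\|_{L^1([t_0,T_1])}]$ for every $\widehat X\in\mathcal{A}^{adapt}_n(L^1([0,T]),X_0,W)$.

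\emph{The adaptive structure.} For $\widehat X=\widehat X^n\in\mathcal{A}^{adapt}_n(L^1([0,T]),X_0,W)$ with random number of evaluations $\nu$ (so $\EE[\nu]\le n$), let $0=:\tau_0\le\tau_1<\dots<\tau_\nu\le T=:\tau_{\nu+1}$ be the sorted evaluation points, $\overline W$ the piecewise linear interpolation of $W$ through this grid, $B^{(j)}=(W_{\tau_{j-1}+t}-\overline W_{\tau_{j-1}+t})_{t\in[0,\tau_j-\tau_{j-1}]}$, $I_j:=[\tau_{j-1},\tau_j]$, and $\mathcal H$ the $\sigma$-algebra generated by $X_0$, $\nu$, the grid and $(W_{\tau_1},\dots,W_{\tau_\nu})$. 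As in~\cite{hhmg2019}, $\widehat X$ is $\mathcal H$-measurable and, conditionally on $\mathcal H$, the $B^{(j)}$ are independent Brownian bridges of the respective lengths, independent of $\mathcal H$; since the intervals $I_1,\dots,I_\nu$ tile $[0,\tau_\nu]$ and $Y$ is driven by $W$, the value $Y_{\tau_{j-1}}$ is measurable with respect to $\mathcal G_j:=\sigma(\mathcal H,X_0,B^{(1)},\dots,B^{(j-1)})$. With $K:=2/p_0$, Markov's inequality gives $\PP(\nu>Kn)\le 1/K$, so $A_1:=A\cap\{\nu\le Kn\}$ still has $\PP(A_1)\ge p_0/2$. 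For each $j\in\N$ put $I_j'':=[\tau_{j-1},\min(\tau_j,\tau_{j-1}+\epsilon_1)]$ on $\{j\le\nu\}$, let $A_j^\circ:=\{j\le\nu,\ \nu\le Kn,\ t_0\le\tau_{j-1},\ \tau_j\le T_1,\ Y_{\tau_{j-1}}\in\overline{B_{\delta/2}(\xi)}\}\in\mathcal G_j$, and let $\widetilde W^{(j)}$ be $W$ with the single bridge $B^{(j)}$ replaced by $-B^{(j)}$. Then $\widetilde W^{(j)}$ is again a Brownian motion with $\PP^{(X_0,\widetilde W^{(j)})}=\PP^{(X_0,W)}$, it coincides with $W$ at every evaluation point, and — because $I_j$ lies strictly after $I_1,\dots,I_{j-1}$ — the grid, the observed values, the quantity $Y_{\tau_{j-1}}$ and the event $A_j^\circ$ are unchanged under this reflection; hence $\widehat X$ is unchanged. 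Writing $Y^{(j)}$ for the solution of the same (globally Lipschitz) SDE driven by $\widetilde W^{(j)}$ with $Y^{(j)}_{t_0}=X_{t_0}$, the triangle inequality and the distributional identity give, exactly as in Lemma~\ref{lemma:lowerBound:infBoundedByTildeDistance} and the global-approximation sketch of Section~\ref{sect:intro:cplOfNoise},
\[
2\,\EE\bigl[\mathbf 1_{A_j^\circ}\|Y-\widehat X\|_{L^1(I_j'')}\bigr]\ \ge\ \EE\bigl[\mathbf 1_{A_j^\circ}\|Y-Y^{(j)}\|_{L^1(I_j'')}\bigr].
\]

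\emph{Per-interval lower bound and summation.} On $I_j$ the processes $Y$ and $Y^{(j)}$ solve the same SDE started from the common point $Y_{\tau_{j-1}}$, driven by increments differing by $2B^{(j)}$, so conditioning on $\mathcal G_j$ a standard stability computation (as in the proof of Lemma~\ref{lem:lowerBound:coupled} and Lemma~3 in~\cite{MGY23}) writes $Y_s-Y^{(j)}_s$ as $2\overline\sigma(\tau_{j-1},Y_{\tau_{j-1}})B^{(j)}_{s-\tau_{j-1}}$ plus a remainder of strictly higher order over the short interval $I_j''$ of length $\le\epsilon_1$; using $|\overline\sigma|\ge\inf|\overline\sigma|>0$ and $\EE[|B^{(j)}_t|]=\sqrt{2/\pi}\,\sqrt{t(\tau_j-\tau_{j-1}-t)/(\tau_j-\tau_{j-1})}$ one obtains a constant $c>0$, independent of $n$ and $j$, with $\EE[\mathbf 1_{A_j^\circ}\|Y-Y^{(j)}\|_{L^1(I_j'')}]\ge c\,\EE[\mathbf 1_{A_j^\circ}((\tau_j-\tau_{j-1})\wedge\epsilon_1)^{3/2}]$. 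The intervals $I_j$, $j\le\nu$, are pairwise disjoint in $[0,T]$, and on $A_1$ one has $\mathbf 1_{A_j^\circ}=1$ whenever $t_0\le\tau_{j-1}$ and $\tau_j\le T_1$ (since then $\tau_{j-1}\in[t_0,T_1]$ and $A$ holds, so $Y_{\tau_{j-1}}\in\overline{B_{\delta/2}(\xi)}$). Combining the two displays with $I_j''\subset I_j$ and summing over $j$ therefore gives
\[
\EE\bigl[\|X-\widehat X\|_{L^1([0,T])}\bigr]\ \ge\ \tfrac c2\,\EE\Bigl[\mathbf 1_{A_1}\!\!\sum_{j:\,t_0\le\tau_{j-1},\,\tau_j\le T_1}\!\!\bigl((\tau_j-\tau_{j-1})\wedge\epsilon_1\bigr)^{3/2}\Bigr].
\]
On $A_1$ there are at most $\nu+1\le Kn+1$ intervals $I_j$ with $I_j\subset[t_0,T_1]$, and — up to the at most two boundary intervals straddling $t_0$ resp.\ $T_1$, handled by distinguishing whether $\max_j(\tau_j-\tau_{j-1})$ exceeds $\epsilon_1$, exactly as the restriction~\eqref{maxDistanceOfTi} is handled in the proof of Theorem~\ref{cor:pwLip} — these intervals cover a set of Lebesgue measure at least $(T_1-t_0)/2$. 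By Hölder's inequality with exponents $3$ and $3/2$ this forces $\sum_j((\tau_j-\tau_{j-1})\wedge\epsilon_1)^{3/2}\ge c'n^{-1/2}$ on $A_1$ for all large $n$, whence $\EE[\|X-\widehat X\|_{L^1([0,T])}]\ge\frac{cc'p_0}{4}\,n^{-1/2}$; the constant is finally lowered to handle small $n$.

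\emph{Main obstacle.} The delicate part is making the adaptive step rigorous: one must work with the randomly many, adaptively chosen evaluation points through the sorted-observation $\sigma$-algebra $\mathcal H$ (so that the conditional Brownian-bridge structure is available and reflecting a single bridge provably leaves the observed data, hence $\widehat X$, unchanged), and one must control a random, possibly very non-uniform grid (the reason for the truncation $(\cdot)\wedge\epsilon_1$ and for the Hölder step). This is precisely where the argument departs from the non-adaptive Theorem~\ref{cor:pwLip} and relies on the machinery of~\cite{hhmg2019}; the localisation to only-local assumptions on the coefficients, dealt with above by passing to $(\overline\mu,\overline\sigma)$ and the auxiliary process $Y$, is a secondary technical point.
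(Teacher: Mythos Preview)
Your overall strategy---localise the coefficients, use the conditional Brownian-bridge structure of the observations, reflect one bridge at a time, compare to an Euler-type step, and finish with H\"older---is indeed the paper's strategy. But two points deserve attention; the second is a genuine gap.

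\textbf{A structural difference.} The paper does \emph{not} work with the raw adaptive grid. It first passes to the auxiliary class of methods (B1)--(B3) by truncating $\nu$ via Markov and then \emph{adding} $n+1$ equidistant points $t_l=t_0+(T_0-t_0)l/n$ to the adaptive ones (so $3n/2$ points in total). This guarantees that $t_0,T_0$ are grid points and that at least $n/2$ of the equidistant sub-intervals contain no adaptive point; on those the bridge is the full bridge on an interval of deterministic length $(T_0-t_0)/n$. This removes your ``boundary interval'' problem and the need for the truncation $(\cdot)\wedge\epsilon_1$ entirely. Your direct approach with the random grid can be made to work, but the case $\nu$ very small (e.g.\ $\nu=0$) leaves no $I_j\subset[t_0,T_1]$ at all, and your proposed fix via $\max_j h_j>\epsilon_1$ does not help when the large interval straddles $t_0$ or $T_1$, since then $A_j^\circ$ fails for it.

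\textbf{The gap.} More seriously, your chain of inequalities does not close. To replace $X$ by $Y$ you restrict to $A$ (or $A_1$), obtaining $\EE[\|X-\widehat X\|]\ge\EE[\mathbf 1_{A_1}\|Y-\widehat X\|_{L^1([t_0,T_1])}]$. But the reflection step is applied with the indicator $\mathbf 1_{A_j^\circ}$, because $A_j^\circ\in\mathcal G_j$ is what is invariant under reflecting $B^{(j)}$; the event $A_1$ depends on the full path of $Y$ on $[t_0,T_1]$ and is \emph{not} reflection-invariant. Thus the inequality you derive is $2\sum_j\EE[\mathbf 1_{A_j^\circ}\|Y-\widehat X\|_{L^1(I_j'')}]\ge c\sum_j\EE[\mathbf 1_{A_j^\circ}(h_j\wedge\epsilon_1)^{3/2}]$, whose left-hand side is bounded above by $2\,\EE[\|Y-\widehat X\|_{L^1([0,T])}]$, not by $2\,\EE[\|X-\widehat X\|_{L^1([0,T])}]$; and $A_j^\circ$ (which only pins $Y_{\tau_{j-1}}\in\overline{B_{\delta/2}(\xi)}$) does not ensure $X=Y$ on $I_j''$, nor even on $[t_0,\tau_{j-1}]$. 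The paper resolves exactly this tension: it works with $X^\ast$ and splits the ``stay in the ball'' event as $\{\widehat\tau^{[0,t_{l-1}],\delta_0}(X^\ast)=t_{l-1}\}\cap\{\widehat\tau^{[0,t_l-t_{l-1}],\delta_0}(X^\ast_{\cdot+t_{l-1}})=t_l-t_{l-1}\}$; the first factor is $\mathcal G_l$-measurable (hence reflection-invariant), the second has the same conditional distribution as its reflected counterpart, and the resulting extra indicator is removed afterwards by an $O(n^{-2})$ exit-probability estimate. That layered bookkeeping is precisely what is missing from your summation step.
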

	
	Similar to~\cite{hhmg2019}, instead of $\Acal_n^{adapt}(L^1([0,T]), X_0, W)$ we first consider a further class of algorithms for technical reasons. In the following proposition we derive a suitable lower bound for methods from this class.

	\begin{prop}\label{prop:adapt}
		Let the assumptions of Theorem~\ref{thm:adapt} hold. Then there exist constants $c_1, c_2, c_3 \in (0, \infty)$ such that for all $n \in 2\N$, all random variables $\widehat{X}^n\colon \Omega \rightarrow L^1([0,T])$, all random variables $\tau_0, \dots, \tau_{3n/2}\colon \Omega \rightarrow [0, T]$ with
		\begin{itemize}
			\item [(B1)] $\tau_l$ is measurable with respect to $\sigma(\Fc_0, W_{\tau_0}, \dots, W_{\tau_{l-1}})$ for $l \in \{1, \dots, 3n/2\}$,
			\item [(B2)] $\tau_l = t_0 + (T_0 - t_0)l/n$ for $l \in \{0, \dots, n\}$,
			\item [(B3)] $\widehat{X}^n$ is measurable with respect to $\mathfrak A = \sigma(\Fc_0, W_{\tau_0} , \dots, W_{\tau_{3n/2}})$,
		\end{itemize}
		and all $A \in \mathfrak{A}$ it holds
		\[
		\EE\big[ 1_A\|X - \widehat{X}^n \|_{L^1([0,T])}\big] \ge c_1 \cdot (c_2 - \PP(A^\mathsf{c}))\cdot n^{-1/2} - \frac{c_3}{n}.
		\]
	\end{prop}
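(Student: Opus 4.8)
\textbf{Proof proposal for Proposition~\ref{prop:adapt}.}

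The plan is to imitate the local coupling of noise technique but in the adaptive, global-in-time setting: I will construct a coupled Brownian motion $\tW$ that agrees with $W$ at the (random) evaluation points $\tau_0, \dots, \tau_{3n/2}$, so that $\widehat{X}^n$ is the same functional of either path, and then bound $\EE[1_A\|X-\widehat X^n\|_{L^1}]$ from below by (half of) $\EE[1_A\|X - \tX\|_{L^1}]$ via the triangle inequality, where $\tX$ solves the same SDE driven by $\tW$. First I would restrict attention to the deterministic grid points $\tau_0,\dots,\tau_n$ on $[t_0,T_0]$ from (B2); on each subinterval $[\tau_{l-1},\tau_l]$ the increment of $W$ decomposes into its linear interpolation (a $\sigma(W_{\tau_0},\dots,W_{\tau_n})$-measurable, and in particular $\mathfrak A$-measurable, part) plus an independent Brownian bridge $B^{(l)}$. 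I flip the sign of these bridges on $[t_0,T_0]$, $\tB^{(l)} = -B^{(l)}$, leaving the noise outside $[t_0,T_0]$ and all the $\mathfrak A$-measurable data untouched, exactly as sketched in Section~\ref{sect:intro:cplOfNoise} for the Lipschitz global case. Since the remaining adaptive points $\tau_{n+1},\dots,\tau_{3n/2}$ need not lie on the grid, I must be careful here: either I absorb them by noting the coupling can be built so that $\tW_{\tau_l}=W_{\tau_l}$ for all $l$ (conditioning on the bridges agreeing at those extra points as well, at the cost of replacing bridges by suitably pinned processes), or — more cleanly — I argue that (B1)--(B3) force $\widehat X^n$ to be a measurable functional of $\Fc_0$ and $(W_{\tau_l})_{l\le 3n/2}$, and the coupling preserves this whole vector. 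This is the step I expect to be the main obstacle: making the sign-flip coupling simultaneously respect all of $W_{\tau_0},\dots,W_{\tau_{3n/2}}$ when the later times are data-dependent, while keeping $(B_t)_{t\in[t_0,T_0]}$ and $\tB$ equidistributed given that data.

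Granting the coupling, I would then run the Euler-step comparison from the introduction: define on $(\tau_{l-1},\tau_l]$, for $l\in\{1,\dots,n\}$, the frozen-coefficient steps $\overline X_t = X_{\tau_{l-1}} + \mu(\tau_{l-1},X_{\tau_{l-1}})(t-\tau_{l-1}) + \sigma(\tau_{l-1},X_{\tau_{l-1}})(W_t - W_{\tau_{l-1}})$ and its coupled analogue $\widetilde{\overline X}_t$ with $W$ replaced by $\tW$. Standard strong-error estimates for SDEs with coefficients that are Lipschitz on $[t_0,T_0]\times[\xi-\delta,\xi+\delta]$, combined with a localization at the first exit of $X$ (or $\tX$) from $B_\delta(\xi)$ and the Markov property at time $\tau_{l-1}$, give $\int_{\tau_{l-1}}^{\tau_l}\EE[1_{\{X_{\tau_{l-1}}\in B_{\delta/2}(\xi)\}}|X_s-\overline X_s|]\,ds \le c\,n^{-3/2}\PP(X_{\tau_{l-1}}\in B_{\delta/2}(\xi))$ and likewise for $\tX$ versus $\widetilde{\overline X}$; summing over $l\le n$ yields a total error of order $n^{-1/2}$, which is why the $c_3/n$ slack in the statement is harmless. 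Then on the ``good'' event $X_{\tau_{l-1}}\in B_{\delta/2}(\xi)$ one has $|\overline X_s - \widetilde{\overline X}_s| = |\sigma(\tau_{l-1},X_{\tau_{l-1}})|\cdot|B_s - \tB_s| = 2|\sigma(\tau_{l-1},X_{\tau_{l-1}})|\cdot|B_s|$, which by (non-deg) is bounded below by $2\kappa|B_s|$ with $\kappa = \inf_{[t_0,T_0]\times B_\delta(\xi)}|\sigma|>0$.

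To finish, I would lower-bound $\sum_{l=1}^n \int_{\tau_{l-1}}^{\tau_l}\EE[1_A\,1_{\{X_{\tau_{l-1}}\in B_{\delta/2}(\xi)\}}|B_s|]\,ds$. Using $\EE[|B_s|\mid \Fc_{\tau_{l-1}}] \ge c\sqrt{(s-\tau_{l-1})(\tau_l-s)/(\tau_l-\tau_{l-1})}$ (the conditional bridge has the appropriate variance) and integrating in $s$ gives a contribution of order $(\tau_l-\tau_{l-1})^{3/2} = (T_0-t_0)^{3/2}n^{-3/2}$ per subinterval, weighted by $\PP(A\cap\{X_{\tau_{l-1}}\in B_{\delta/2}(\xi)\})$. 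Because $A$ and the good event are events, $\PP(A\cap\{X_{\tau_{l-1}}\in B_{\delta/2}(\xi)\}) \ge \PP(X_{\tau_{l-1}}\in B_{\delta/2}(\xi)) - \PP(A^{\mathsf c})$, and I need a uniform-in-$l$ lower bound $\PP(X_{\tau_{l-1}}\in B_{\delta/2}(\xi)) \ge 2c_2$ for $l$ ranging over a positive fraction of $\{1,\dots,n\}$; this follows from (reach), $\PP(X_{t_0}\in B_\delta(\xi))>0$, together with continuity of paths and a short-time estimate showing $X$ does not immediately leave $B_{\delta/2}(\xi)$ with nonvanishing probability (for $\tau_{l-1}$ within a fixed distance of $t_0$). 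Summing the $\Theta(n)$ such subintervals, each of size $\Theta(n^{-3/2})$, against the weight $(2c_2 - \PP(A^{\mathsf c}))$ and halving for the triangle inequality yields the claimed bound $\EE[1_A\|X-\widehat X^n\|_{L^1}] \ge c_1(c_2 - \PP(A^{\mathsf c}))n^{-1/2} - c_3/n$. The delicate points beyond the coupling construction are the uniform reachability lower bound and keeping all constants independent of $n$ and of the choice of adaptive times $\tau_{n+1},\dots,\tau_{3n/2}$.
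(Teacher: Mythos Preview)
Your overall strategy matches the paper's, but the obstacle you flag is real and you do not resolve it; the paper's resolution is the key idea. Rather than building a global coupled solution $\tX$, condition on $\mathfrak A$: by (B1) every $\tau_l$ is $\mathfrak A$-measurable, so conditionally all evaluation points are deterministic, and $B = W - \overline W$ (with $\overline W$ the piecewise-linear interpolant through the \emph{ordered} points $\tau^{ord}_0 \le \dots \le \tau^{ord}_{3n/2}$) consists of independent Brownian bridges on each $[\tau^{ord}_{j-1},\tau^{ord}_j]$ (Lemmas~1--2 in~\cite{Y17}). Flipping $B \mapsto -B$ on $[t_{l-1},t_l]$ while keeping $B|_{[0,t_{l-1}]}$ fixed preserves the conditional law given $\mathfrak A$ and $B|_{[0,t_{l-1}]}$; this lets you run the triangle inequality interval-by-interval with the \emph{local} coupling $\widetilde{\overline X}_t = X^*_{t_{l-1}} + \sigma^*(t_{l-1},X^*_{t_{l-1}})(\overline W_t - B_t - \overline W_{t_{l-1}})$, same start point as $\overline X$. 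Your write-up drifts between a global $\tX$ and this local coupling; only the local one gives $|\overline X_s - \widetilde{\overline X}_s| = 2|\sigma^*(\cdot)|\,|B_s|$ exactly, since a global $\tX$ would have $\tX_{\tau_{l-1}} \ne X_{\tau_{l-1}}$. An adaptive point inside $(t_{l-1},t_l)$ pins $B$ there and spoils the bridge scaling, but pigeonhole saves you: there are $n$ grid intervals and only $n/2$ extra adaptive points, so at least $n/2$ intervals contain none, and on those $B|_{[t_{l-1},t_l]}$ is a full Brownian bridge of span $(T_0-t_0)/n$, giving $\int_{t_{l-1}}^{t_l}\EE[|B_s|\mid\mathfrak A]\,ds \asymp n^{-3/2}$.

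Two further fixes are needed. First, since $\mu,\sigma$ are only regular on $[t_0,T_0]\times\overline{B_\delta(\xi)}$, you cannot write $X$ as a path functional of $W$ nor control Euler errors globally; the paper introduces smooth cut-offs $\eta_1,\eta_2$ to build globally bounded, Lipschitz $\mu^*,\sigma^*$ with $\inf|\sigma^*|>0$ that agree with $\mu,\sigma$ on $B_{\delta_1}(\xi)$. The auxiliary solution $X^*$ then has the functional representation, coincides with $X$ while it stays in $B_{\delta_0}(\xi)$ (Lemma~20 in~\cite{hhmg2019}), and all coupling and Euler estimates are carried out for $X^*$ on the event $\{\widehat\tau^{[0,t_l],\delta_0}(X^*)=t_l\}$. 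For reachability one then uses the single event $\{\forall s\in[t_0,T_0]\colon X^*_s\in B_{\delta^*}(\xi)\}$, whose probability is positive by Pakkanen's support theorem~\cite{Pakkanen2010}; this is cleaner than your uniform-in-$l$ bound on $\PP(X_{\tau_{l-1}}\in B_{\delta/2}(\xi))$ and delivers $c_2$ directly. Second, your Euler error is too loose: a total of order $n^{-1/2}$ would swallow the main term. In fact $\EE|X^*_s - \overline X^{*,n}_s| = O(1/n)$ uniformly in $s$ (the stochastic part contributes $\bigl(\int_{t_{l-1}}^s \EE|\sigma^*(u,X^*_u)-\sigma^*(t_{l-1},X^*_{t_{l-1}})|^2\,du\bigr)^{1/2} = O(1/n)$, not $O(1/\sqrt n)$), so $\int_0^{T_0}\EE|X^*_s-\overline X^{*,n}_s|\,ds = O(1/n)$, which is exactly the $c_3/n$ in the statement.
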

	\begin{proof}
		We use ideas from the proof of Proposition~5 in~\cite{hhmg2019} and we may assume $t_0 = 0$ with similar arguments as in that proof. Throughout this proof we use $c_1,c_2,\dots\in (0,\infty)$ to denote positive constants which do not depend on $n$.
		
		Due to (reach), there exist $\delta^\ast, \delta_0, \delta_1, \delta_2 \in (0, \delta)$ such that $\delta^\ast < \delta_0 < \delta_1 < \delta_2 < \delta$ and 
		\[
		\PP(X_0 \in B_{\delta^\ast}(\xi)) > 0.
		\]
		
		Let $\eta_1, \eta_2\colon \R \rightarrow \R$ be infinitely times differentiable functions with $\eta_1, \eta_2 \in [0,1]$ such that for $x \in \R$ it holds
		\[
		\eta_1(x) = \begin{cases}
			1, &\text{if}\, x \in B_{\delta_1}(\xi), \\
			0, &\text{if}\, x \notin B_{\delta_2}(\xi),
		\end{cases} 
		\qquad 
		\eta_2(x) = \begin{cases}
			0, &\text{if}\, x \in B_{\delta_0}(\xi), \\
			1, &\text{if}\, x \notin B_{\delta_1}(\xi).
		\end{cases} 
		\]		
		Moreover, let $\mu^\ast, \sigma^\ast\colon [0,T_0] \times \R \rightarrow \R$ be given by
		\[
		\mu^\ast(t,x) = \eta_1(x) \cdot \mu(t,x), \quad \sigma^\ast(t,x) = \eta_1(x) \cdot \sigma(t,x) + \eta_2(x) \cdot \sgn(\sigma(t,x)), \quad (t, x) \in  [0,T_0] \times \R.
		\]
		
		Because of (local Lip) and (non-deg) the functions $\mu^\ast, \sigma^\ast$ are bounded, Lipschitz continuous and 
		\begin{equation}\label{eqn_adapt_3}
			\inf_{(t,x) \in [0,T_0] \times \R} |\sigma^\ast(t,x)|  > 0.
		\end{equation}
		
		Let $X^\ast$ be a solution of the SDE~\eqref{sde2} on the interval $[0, T_0]$ with drift $\mu^\ast$, diffusion $\sigma^\ast$ and initial value $X_0$.
		
		Let $n \in 2\N$,  $\widehat{X}^n\colon \Omega \rightarrow L^1([0,T])$ be a random variable, $\tau_0, \dots, \tau_{3n/2}\colon \Omega \rightarrow [0,T]$ be random variables such that (B1)-(B3) hold and let $A \in \mathfrak{A}$.
		
		In the following we use ideas of the proof of Proposition 10 in~\cite{hhmg2019}.
		We set $t_l = T_0l/n$ for $l \in \{0, \dots, n\}$ and we set $\overline{X}^{\ast,n}_0 = X_0$ and for $l \in \{1, \dots, n\}$, $t \in (t_{l-1}, t_{l}]$
		\[
		\overline{X}^{\ast,n}_t = X^{\ast}_{t_{l-1}} + \sigma^\ast( t_{l-1}, X^{\ast}_{t_{l-1}}) \cdot (W_t - W_{t_{l-1}}).
		\]

		We set for $\gamma \in (0, \delta]$ and $0 < \widehat{T} \le T$
		\[
		\widehat \tau^{[0, \widehat{T}], \gamma}\colon C([0, \widehat{T}]) \rightarrow [0, \widehat{T}], \quad f \mapsto \inf\{s \in [0, \widehat{T}] \colon f(s) \notin B_{\gamma}(\xi)\}  \wedge \widehat{T}.
		\]
		
		Since $\mu^\ast, \sigma^\ast$ are bounded and $\sigma^\ast$ is Lipschitz continuous, it holds for all $l \in \{1, \dots, n\}$ and all $s \in (t_{l-1}, t_l]$
		\begin{equation*}
			\begin{aligned}
				&\EE\Big[ \big|X^\ast_s - \overline{X}^{\ast, n}_s\big| \Big] = \EE\Big[ \big|\int_{t_{l-1}}^s \mu^\ast(s, X^\ast_s) \, ds + \int_{t_{l-1}}^s \sigma^\ast(s, X^\ast_s) - \sigma^\ast(t_{l-1}, X^\ast_{t_{l-1}})\, dW_s \big| \Big] \\
				& \qquad \qquad \le \frac{c_1}{n} + c_2\Big(\int_{t_{l-1}}^s \frac{1}{n^2} + \EE \big[| X^\ast_s - X^\ast_{t_{l-1}}|^2 \big] \, ds\Big)^{1/2} \le \frac{c_3}{n}.
			\end{aligned}
		\end{equation*}
		Hence, it holds with Lemma 20 in~\cite{hhmg2019}
		\begin{equation}\label{eqn_adapt_1}
			\begin{aligned}
				&\EE\big[ 1_A \|X - \widehat{X}^n \|_{L^1([0,T])}\big] \\
				&\qquad \geq \EE\big[ 1_A \|X - \widehat{X}^n \|_{L^1([0,T_0])}\big] \\
				& \qquad = \sum_{l = 1}^n \int_{t_{l-1}}^{t_l} \EE \big[1_A|X_s - \widehat{X}^n_s|\big] \, ds \\
				& \qquad \ge \sum_{l= 1}^n \int_{t_{l-1}}^{t_l} \EE \big[1_A \cdot 1_{\{\widehat \tau^{[0, t_l], \delta_0}(X^\ast) = t_l\}}|X^\ast_s - \widehat{X}^n_s|\big] \, ds \\
				& \qquad \ge \sum_{l= 1}^n \int_{t_{l-1}}^{t_l} \EE \big[1_A \cdot 1_{\{\widehat \tau^{[0, t_l], \delta_0}(X^\ast) = t_l\}}|\overline{X}^{\ast, n}_s -  \widehat{X}^n_s|\big] \, ds - \int_0^{T_0} \EE\big[ |X^\ast_s - \overline{X}^{\ast, n}_s|\big] \, ds \\
				& \qquad \ge \sum_{l= 1}^n \int_{t_{l-1}}^{t_l} \EE \big[1_A \cdot 1_{\{\widehat \tau^{[0, t_{l-1}], \delta_0}(X^\ast) = t_{l-1}\} \cap \{\widehat \tau^{[0, t_l - t_{l-1}], \delta_0}(X^\ast_{\cdot + t_{l-1}}) = (t_l - t_{l-1}) \}}|\overline{X}^{\ast, n}_s -  \widehat{X}^n_s|\big] \, ds - \frac{c_4}{n}.
			\end{aligned}
		\end{equation}
		
		Since $\mu^\ast, \sigma^\ast$ are Lipschitz continuous there exist, due to Theorem 1 in~\cite{Ka96}, for $l \in \{1, \dots, n\}$ functions $F^\ast_{t_{l-1}} \colon \R \times C([0, t_{l-1}]) \rightarrow C([0, t_{l-1}])$, $F^{\ast}_{T_0/n}\colon  \R \times C([0, T_0/n]) \rightarrow C([0, T_0/n])$ such that
		\begin{equation}\label{eqn_adapt_2}
			\begin{aligned}
				(X^\ast_s)_{s \in [0, t_{l-1}]} &= F^\ast_{t_{l-1}}(X_0, (W_u)_{u \in [0, t_{l-1}]}), \\ (X^\ast_{s + t_{l-1}})_{s \in [0, t_l - t_{l-1}]} &= F^\ast_{T_0/n}(X_{t_{l-1}}, (W_{u + t_{l-1}} - W_{t_{l-1}})_{u \in [0, t_l - t_{l-1}]}).
			\end{aligned}
		\end{equation}

		To be able to order $\tau_0, \dots, \tau_{3n/2}$ we also introduce $\tau^{ord}_0, \dots, \tau^{ord}_{3n/2}$ with $\tau^{ord}_{0} \le \tau^{ord}_1 \le \dots \le \tau^{ord}_{3n/2}$ and $\{\tau_0, \dots, \tau_{3n/2}\} = \{\tau^{ord}_0, \dots, \tau^{ord}_{3n/2}\}$. Therewith, we define the piecewise linear interpolation of $W$ for $l \in \{1, \dots, 3n/2\}$ with $\tau^{ord}_{l-1} < \tau^{ord}_l$ and $s \in [\tau^{ord}_{l-1}, \tau^{ord}_l]$ by
		\[
		\overline{W}_{s} = \frac{\tau^{ord}_l - s}{\tau^{ord}_l - \tau^{ord}_{l-1}} W_{\tau^{ord}_{l-1}} + \frac{s - \tau^{ord}_{l-1}}{\tau^{ord}_l - \tau^{ord}_{l-1}} W_{\tau^{ord}_l} = W_{\tau^{ord}_{l-1}} + \frac{s - \tau^{ord}_{l-1}}{\tau^{ord}_l - \tau^{ord}_{l-1}} \big( W_{\tau^{ord}_l} - W_{\tau^{ord}_{l-1}} \big)
		\] 
		and 
		\[
		B = W - \overline{W}.
		\]
		Then it holds
		\begin{itemize}[align=left,labelwidth=\widthof{(given $\mathfrak{A}$)},leftmargin=\labelwidth+\labelsep]
			\item [(given $\mathfrak{A}$)] conditioned on $\mathfrak{A}$,
			\begin{itemize}
				\item [($\mathfrak{A}$-1)] the values  $X_0, \tau_0, \dots, \tau_{3n/2}, \tau^{ord}_0, \dots, \tau^{ord}_{3n/2}$ and the processes $\overline{W}$, $\widehat{X}^n$ are fixed,
				\item [($\mathfrak{A}$-2)] $B$ consists of Brownian bridges on each of the intervals $[\tau^{ord}_{l-1},\tau^{ord}_l]$ for \\$l \in \{1, \dots, 3n/2\}$ with $\tau^{ord}_{l-1} < \tau^{ord}_l$ which are independent,
			\end{itemize}
		\end{itemize} 
		cf. Lemma 1 and Lemma 2 in~\cite{Y17}.
		Setting for $l \in \{1, \dots, n\}$
		\begin{equation}\label{eqn_adapt_5}
			\begin{aligned}
				(\tX^{\ast, l}_{s + t_{l-1}})_{s \in [0, t_l - t_{l-1}]} &= F^\ast_{T_0/n}(X_{t_{l-1}}, (\overline{W}_{u + t_{l-1}} - B_{u + {t_{l-1}}} - \overline{W}_{t_{l-1}})_{u \in [0, t_l - t_{l-1}]}),
				\\\overline{\tX}^{\ast,n}_s& = X^{\ast}_{t_{l-1}} + \sigma^\ast(t_{l-1}, X^{\ast}_{t_{l-1}}) \cdot (\overline{W}_s- B_s - \overline{W}_{t_{l-1}}), \qquad \qquad s \in (t_{l-1}, t_l], 
			\end{aligned}
		\end{equation}
		we thus get for $i \in \{1, \dots, n/2\}$ by \eqref{eqn_adapt_2}, (given $\mathfrak{A}$) and since $\widehat \tau^{[0, t_{l-1}], \delta_0}, \widehat \tau^{[0, t_{l} - t_{l-1}], \delta_0}$ are measurable
		\begin{equation*}
			\begin{aligned}
				&\EE \big[\int_{t_{l-1}}^{t_{l}} 1_{\{\widehat \tau^{[0, t_{l-1}], \delta_0}(X^\ast) = t_{l-1}\} \cap \{\widehat \tau^{[0, t_{l} - t_{l-1}], \delta_0}(\tX^{\ast, l}_{\cdot + t_{l-1}}) = (t_{l} - t_{l-1}) \}}|\overline{\tX}^{\ast, n}_s -  \widehat{X}^n_s| \, ds \,\vert \mathfrak{A}\big]\\
				&\qquad  = \EE \big[ \int_{t_{l-1}}^{t_{l}} 1_{\{\widehat \tau^{[0, t_{l-1}], \delta_0}(X^\ast) = t_{l-1}\} \cap \{\widehat \tau^{[0, t_{l} - t_{l-1}], \delta_0}(X^\ast_{\cdot + t_{l-1}}) = (t_{l} - t_{l-1}) \}}|\overline{X}^{\ast, n}_s -  \widehat{X}^n_s| \, ds \,\vert \mathfrak{A}\big].
			\end{aligned}
		\end{equation*}
		Let $l \in \{1, \dots, n\}$. Therefore, we obtain with
		\[
		A_{l} = \{\widehat \tau^{[0, t_{l-1}], \delta_0}(X^\ast) = t_{l-1}\} \cap \{\widehat \tau^{[0, t_{l} - t_{l-1}], \delta_0}(X^\ast_{\cdot + t_{l-1}}) = \widehat \tau^{[0, t_{l} - t_{l-1}], \delta_0}(\tX^{\ast, l}_{\cdot + t_{l-1}}) = (t_{l} - t_{l-1}) \}
		\]
		the validity of
		\begin{equation*}
			\begin{aligned}
				&\EE \big[\int_{t_{l-1}}^{t_{l}} 1_{A_{l}} |\overline{X}^{\ast, n}_s - \overline{\tX}^{\ast, n}_s| \, ds \,\vert \mathfrak{A}\big] \\
				& \qquad \le 2 \EE \big[ \int_{t_{l-1}}^{t_{l}} 1_{\{\widehat \tau^{[0, t_{l-1}], \delta_0}(X^\ast) = t_{l-1}\} \cap \{\widehat \tau^{[0, t_{l} - t_{l-1}], \delta_0}(X^\ast_{\cdot + t_{l-1}}) = (t_{l} - t_{l-1}) \}}|\overline{X}^{\ast, n}_s -  \widehat{X}^n_s| \, ds \,\vert \mathfrak{A}\big],
			\end{aligned}
		\end{equation*}
		which gives us with \eqref{eqn_adapt_3} and the definitions of $\overline{X}^{\ast, n}, \overline{\tX}^{\ast, n}$ 
		\begin{equation}
			\begin{aligned}\label{eqn_adapt_4}
				&\EE \big[ \int_{t_{l-1}}^{t_{l}} 1_{\{\widehat \tau^{[0, t_{l-1}], \delta_0}(X^\ast) = t_{l-1}\} \cap \{\widehat \tau^{[0, t_{l} - t_{l-1}], \delta_0}(X^\ast_{\cdot + t_{l-1}}) = (t_{l} - t_{l-1}) \}}|\overline{X}^{\ast, n}_s -  \widehat{X}^n_s| \, ds \,\vert \mathfrak{A}\big] \\
				& \qquad \qquad \ge  c_5 \EE \big[\int_{t_{l-1}}^{t_{l}} 1_{A_{l}} |B_s| \, ds \,\vert \mathfrak{A}\big].
			\end{aligned}
		\end{equation}
		Now we have due to \eqref{eqn_adapt_2}, \eqref{eqn_adapt_5}, (given $\mathfrak{A}$) and the measurability of $\widehat \tau^{[0, t_{l-1}], \delta^\ast},\widehat \tau^{[0, t_{l-1}], \delta_0}$ and $ \widehat \tau^{[0, t_{l} - t_{l-1}], \delta_0}$
		\begin{equation*}
			\begin{aligned}
				&\EE \big[\int_{t_{l-1}}^{t_{l}} 1_{A_{l}} |B_s| \, ds \,\vert \mathfrak{A}\big] \\
				& \qquad \qquad \ge \EE \big[\int_{t_{l-1}}^{t_{l}} 1_{A_{l} \cap \{|B_s| \le 1\}}  |B_s| \, ds \,\vert \mathfrak{A}\big] \\
				& \qquad \qquad \ge \EE\big[\int_{t_{l-1}}^{t_{l}}1_{\{\widehat \tau^{[0, t_{l-1}], \delta^\ast}(X^\ast) = t_{l-1}\} \cap \{|B_s| \le 1\}} |B_s| \, ds \,\vert \mathfrak{A} \big] \\
				& \qquad \qquad \qquad \qquad - 2 \EE \big[1_{\{X^\ast_{t_{l-1}} \in \overline{B_{\delta^\ast}(\xi)}\} \cap \{\widehat \tau^{[0, t_{l} - t_{l-1}], \delta_0}(X^\ast_{\cdot + t_{l-1}}) < (t_{l} - t_{l-1}) \}} \,\vert \mathfrak{A}\big] \\
				& \qquad \qquad \ge \EE\big[\int_{t_{l-1}}^{t_{l}}1_{\{\widehat \tau^{[0, t_{l-1}], \delta^\ast}(X^\ast) = t_{l-1}\}} |B_s| \, ds \,\vert \mathfrak{A} \big] - \frac{c_6}{n^2} \\
				& \qquad \qquad \qquad \qquad - 2 \EE \big[1_{\{X^\ast_{t_{l-1}} \in \overline{B_{\delta^\ast}(\xi)}\} \cap \{\widehat \tau^{[0, t_{l} - t_{l-1}], \delta_0}(X^\ast_{\cdot + t_{l-1}}) < (t_{l} - t_{l-1}) \}} \,\vert \mathfrak{A}\big]
			\end{aligned}
		\end{equation*}
		Combining this with \eqref{eqn_adapt_1}, \eqref{eqn_adapt_4} and using that 
		\begin{equation*}
			\begin{aligned}
				&\PP(\{X^\ast_{t_{l-1}} \in \overline{B_{\delta^\ast}(\xi)}\} \cap \{\widehat \tau^{[0, t_{l} - t_{l-1}], \delta_0}(X^\ast_{\cdot + t_{l-1}}) < (t_{l} - t_{l-1}) \}) \\
				& \qquad \qquad \le \PP(\{|X^\ast_{\widehat \tau^{[0, t_{l} - t_{l-1}], \delta_0}(X^\ast_{\cdot + t_{l-1}}) \wedge (t_l - t_{l-1})} - X^\ast_{t_{l-1}}| \ge (\delta_0 - \delta^\ast)\}) \\
				& \qquad \qquad \le \frac{c_7}{n^2}
			\end{aligned}
		\end{equation*}
		shows, since $A \in \mathfrak{A}$, that
		\begin{equation}\label{eqn_adapt_6}
			\EE\big[ 1_A \|X - \widehat{X}^n \|_{L^1([0,T])}\big] \ge c_5 \EE\Big[ 1_A \EE \big[ \sum_{l=1}^n \int_{t_{l-1}}^{t_{l}}1_{\{\widehat \tau^{[0, t_{l-1}], \delta^\ast}(X^\ast) = t_{l-1}\}} |B_s| \, ds \, \vert \mathfrak{A}\big] \Big]- \frac{c_8}{n}.
		\end{equation}
		For $l \in \{0, \dots, n-1\}$ we set $d_l = \#\{i \in \{0, \dots, 3n/2\} \colon \tau_i \in (t_l, t_{l+1})\}$. Let $l_1, \dots, l_{n/2} \in \{0, \dots, n-1\}$ with $l_1 < \dots < l_{n/2}$ and $d_{l_i} = 0$ for all $i \in \{1, \dots, n/2\}$. Note that for any Brownian bridge $B^{s,t}$ on the interval $[s,t]$ with $0 \le s < t \le \max\{1,T\}$ it holds by the scaling property of Brownian bridges
		\[
		\EE \bigl[\int_s^t |B^{s,t}_u| \, du \bigr] = (t-s) \EE \bigl[\int_0^1 |B^{s,t}_{s + (t-s)u}| \, du \bigr] = (t-s)^{3/2}\EE \bigl[\int_0^1 |B^{0,1}_u| \, du\bigr].
		\] 
		Thus, it holds by (given $\mathfrak{A}$), \eqref{eqn_adapt_2} and \eqref{eqn_adapt_6}
		\begin{equation*}
			\begin{aligned}
				\EE\big[ 1_A \|X - \widehat{X}_n \|_{L^1([0,T])}\big] &\ge c_5 \EE\Big[1_A \EE\big[ \sum_{i=1}^{n/2} \int_{t_{l_i}}^{t_{l_{i+1}}}1_{\{\widehat \tau^{[0, t_{l_i}], \delta^\ast}(X^\ast) = t_{l_i}\}} |B_s| \, ds \, \vert \mathfrak{A}\big] \Big] - \frac{c_8}{n} \\
				&\ge c_9 \EE\Big[ 1_A \sum_{i=1}^{n/2}(t_{l_{i+1}} - t_{l_i})^{3/2}\EE\big[  1_{\{\widehat \tau^{[0, t_{l_i}], \delta^\ast}(X^\ast) = t_{l_i}\}} \, \vert \mathfrak{A}\big] \Big] - \frac{c_8}{n} \\
				&\ge \frac{c_{10}}{n^{1/2}} \EE \big[ 1_A \cdot \PP(\{\forall s \in [0,T_0] \colon X^\ast_s \in B_{\delta^\ast}(\xi)\} \, \vert \mathfrak{A}) \big] - \frac{c_8}{n}\\
				&\ge \frac{c_{10}}{n^{1/2}} \Big( \PP(\{\forall s \in [0,T_0] \colon X^\ast_s \in B_{\delta^\ast}(\xi)\}) - \PP(A^{\mathsf{c}}) \Big) - \frac{c_8}{n}.
			\end{aligned}
		\end{equation*}
		Since $\mu^\ast, \sigma^\ast$ are bounded and Lipschitz continuous and since (reach) as well as \eqref{eqn_adapt_3} hold, the claim follows with a support theorem of Pakkanen, see \cite[Theorem 3.2]{Pakkanen2010}.
	\end{proof}
	
	Now we are ready to show Theorem~\ref{thm:adapt}. To do this, we construct for a method $\widehat X^n$ of the class $\Acal_n^{adapt}(L^1([0,T]), X_0, W)$ a new method that satisfies (B1)-(B3) and apply Proposition~\ref{prop:adapt} afterwards.
	
	\begin{proof}[Proof of Theorem~\ref{thm:adapt}]
		We subsequently use ideas from the proof of Theorem 6 in~\cite{hhmg2019}.
		Let $k \in \N$ and $\widehat{X}^{adapt,k} \in \Acal_k^{adapt}(L^1([0,T]), X_0, W)$. Let $c_1, c_2, c_3 \in (0, \infty)$ be as in Proposition~\ref{prop:adapt} and let $m = \lceil \frac{2k}{c_2} \rceil$. Then it holds
		\[
		\PP(\nu(X_0, W) \ge m) \le \frac{\EE[\nu(X_0, W)]}{m} \le \frac{c_2}{2}.
		\]
		Let $n = 2m$ and define $\widehat{X}^{n}$ by
		\[
		\widehat{X}^n
		= \begin{cases}
			\widehat{X}^{adapt,k}, &\text{if $\nu(X_0, W) < n/2$,}\\
			0 ,& \text{otherwise}.
		\end{cases}
		\]
		Then the map $\widehat{X}^n\colon \Omega \rightarrow L^1([0,T])$ is a random variable and there exist random variables $\tau_0, \dots, \tau_{3n/2}\colon \Omega \rightarrow [0, T]$ such that (B1)-(B3) from Proposition~\ref{prop:adapt} and $\{\nu(X_0, W) < n/2\} \in \mathfrak{A}$ hold.  
		
		Because of
		\begin{equation*}
			\begin{aligned}
				\EE\big[ \|X - \widehat{X}^{adapt,k} \|_{L^1([0,T])}\big] &\ge \EE\big[ 1_{\{\nu(X_0, W) < n/2\}}\|X - \widehat{X}^{adapt,k} \|_{L^1([0,T])}\big] \\
				& = \EE\big[ 1_{\{\nu(X_0, W) < n/2\}}\|X - \widehat{X}^n \|_{L^1([0,T])}\big],
			\end{aligned}
		\end{equation*}
		the claim follows with Proposition~\ref{prop:adapt}.
	\end{proof}

\subsection{Proof of Theorem~\ref{thm:nonadapt:auton}}
	Similar to the previous section, we show that any sequence of adaptive methods has an $L^p$-error rate of at most $1/2$ under the assumptions of Theorem~\ref{thm:nonadapt:auton}. The following theorem implies in particular Theorem~\ref{thm:nonadapt:auton}.
	
	\begin{theorem}\label{thm:adapt:auton}
		Let $T \in (0, \infty)$, $\mu, \sigma\colon \R \rightarrow \R$ be measurable functions and let $X \colon [0, T] \times \Omega \rightarrow \R$ be an adapted process with continuous paths such that 
		\begin{equation*}
			X_t = X_0 + \int_0^t \mu(X_s) \, ds + \int_0^t \sigma(X_s) \, dW_s, \qquad t \in [0,T].
		\end{equation*}
		Assume that there exist $t_0 \in [0, T), T_0 \in (t_0, T], \delta \in (0, \infty)$ and $\xi \in \R$ such that (reach), (local~reg) and (non-deg*) from Theorem~\ref{thm:nonadapt:auton} hold.
		
		Then there exists a constant $c \in (0, \infty)$ such that for all $n \in \mathbb{N}$,
		\[
		\inf_{\widehat X^n \in \Acal_n^{adapt}(L^1([0,T]), X_0, W)} \EE\big[ \|X - \widehat X^n\|_{L^1([0,T])}\big] \ge \frac{c}{n^{1/2}}.
		\]
	\end{theorem}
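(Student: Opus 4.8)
The plan is to eliminate the drift in a neighbourhood of $\xi$ by a scale-function transformation and thereby reduce the claim to Theorem~\ref{thm:adapt}. This detour is necessary precisely because $\mu$ is only bounded, not Lipschitz, on $[\xi-\delta,\xi+\delta]$, so Theorem~\ref{thm:adapt} cannot be applied to $X$ directly; the transformation is what absorbs the irregular drift. We use throughout that $\sigma$, being Lipschitz and non-vanishing on $B_\delta(\xi)$, has constant sign there, say $\sigma>0$, with $c_\sigma:=\inf_{B_\delta(\xi)}\sigma>0$.

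First I would localise the coefficients. Since $B_\delta(\xi)=\bigcup_m B_{\delta-1/m}(\xi)$ and $\PP(X_{t_0}\in B_\delta(\xi))>0$ by (reach), there is $\delta_1\in(0,\delta)$ with $\PP(X_{t_0}\in B_{\delta_1}(\xi))>0$. Fix $\delta_2\in(\delta_1,\delta)$ and a smooth $\eta\colon\R\to[0,1]$ with $1_{B_{\delta_1}(\xi)}\le\eta\le 1_{B_{\delta_2}(\xi)}$, and put $\mu^\ast=\eta\,\mu$ and $\sigma^\ast=\eta\,\sigma+(1-\eta)\,\sigma(\xi)$. Then $\mu^\ast$ is bounded, measurable and compactly supported, $\sigma^\ast$ is bounded and Lipschitz with $\inf_\R\sigma^\ast\ge c_\sigma>0$, and $\mu^\ast=\mu$, $\sigma^\ast=\sigma$ on $B_{\delta_1}(\xi)$; in particular $\mu^\ast/(\sigma^\ast)^2$ is bounded and compactly supported. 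Now define $\Phi\colon\R\to\R$ by
\[
\Phi(x)=\int_\xi^x\exp\Bigl(-\int_\xi^y\tfrac{2\mu^\ast(z)}{(\sigma^\ast(z))^2}\,dz\Bigr)\,dy .
\]
Because $\mu^\ast/(\sigma^\ast)^2$ is bounded with compact support, $\Phi'$ is bounded above and below by positive constants, and $\Phi'$ is Lipschitz with $\Phi''=-\tfrac{2\mu^\ast}{(\sigma^\ast)^2}\Phi'$ bounded; hence $\Phi$ is bi-Lipschitz and lies in $C^1$ with absolutely continuous derivative. By the generalised It\^o formula, see e.g.~\cite[Problem 3.7.3]{ks91} as in the proof of Theorem~\ref{thm:lowebound}, the process $Y:=\Phi(X)$ is adapted with continuous paths and solves
\[
Y_t=Y_0+\int_0^t\mu^Y(Y_s)\,ds+\int_0^t\sigma^Y(Y_s)\,dW_s,\qquad t\in[0,T],
\]
with $\mu^Y=\bigl(\Phi'\mu+\tfrac12\Phi''\sigma^2\bigr)\circ\Phi^{-1}$ and $\sigma^Y=(\Phi'\sigma)\circ\Phi^{-1}$, the occurring integrals being a.s.\ finite since $\Phi',\Phi''$ are bounded and $\int_0^T|\mu(X_s)|\,ds+\langle X\rangle_T<\infty$ almost surely. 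On $U:=\Phi(B_{\delta_1}(\xi))$ one has $\mu=\mu^\ast$, $\sigma=\sigma^\ast$, so the defining relation $\Phi''=-\tfrac{2\mu^\ast}{(\sigma^\ast)^2}\Phi'$ forces $\mu^Y\equiv 0$ on $U$, while $\sigma^Y=(\Phi'\sigma)\circ\Phi^{-1}$ is Lipschitz on $U$ and bounded below there by $c_\sigma\cdot\inf_\R\Phi'>0$.

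It then remains to apply Theorem~\ref{thm:adapt} to $Y$ and to transfer the bound back. The set $U$ is open and $\PP(Y_{t_0}\in U)=\PP(X_{t_0}\in B_{\delta_1}(\xi))>0$, so a routine covering argument produces $\xi'\in U$ and $\delta'>0$ with $B_{\delta'}(\xi')\subseteq U$ and $\PP(Y_{t_0}\in B_{\delta'}(\xi'))>0$; on $B_{\delta'}(\xi')$ the coefficients of $Y$ satisfy (local Lip) (the drift being $\equiv 0$) and (non-deg), and (reach) holds. Theorem~\ref{thm:adapt} thus gives $c>0$ with $\inf_{\widehat Y^n\in\Acal_n^{adapt}(L^1([0,T]),Y_0,W)}\EE[\|Y-\widehat Y^n\|_{L^1([0,T])}]\ge c\,n^{-1/2}$ for all $n$. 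Finally, for $\widehat X^n\in\Acal_n^{adapt}(L^1([0,T]),X_0,W)$, post-composing its output with $\Phi$ and rewiring the functions that depend on the observed initial value through $\Phi^{-1}$ yields $\widehat Y^n:=\Phi\circ\widehat X^n\in\Acal_n^{adapt}(L^1([0,T]),Y_0,W)$ --- the evaluation points, the stopping rule and hence the cost are unchanged --- and bi-Lipschitz continuity of $\Phi$ gives $\|Y-\widehat Y^n\|_{L^1([0,T])}\le L_\Phi\,\|X-\widehat X^n\|_{L^1([0,T])}$. Therefore $\EE[\|X-\widehat X^n\|_{L^1([0,T])}]\ge L_\Phi^{-1}c\,n^{-1/2}$, and taking the infimum over $\widehat X^n$ finishes the proof.

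I expect the main obstacle to be engineering $\Phi$ to be globally bi-Lipschitz with Lipschitz, uniformly positive derivative --- this is exactly what dictates that $\mu^\ast$ be compactly supported and $\sigma^\ast$ be bounded away from zero on all of $\R$, and it is the only place where boundedness of $\mu$ is really used --- together with a careful justification of the generalised It\^o formula for $\Phi(X)$ given that $X$ carries only a bounded, possibly irregular, drift. The remaining bookkeeping (extracting a genuine ball inside $U$, which is needed because $\Phi$ does not map balls to balls, and checking that the adaptive class is stable under post-composition with $\Phi$) is routine. Note that autonomy of the SDE is essential: a non-autonomous scale function would be time-dependent and would not transform the equation into one with Lipschitz coefficients, which is why Theorem~\ref{thm:nonadapt:auton} is separated from Theorem~\ref{thm:nonadapt}.
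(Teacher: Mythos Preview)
Your proposal is correct and follows the same overall architecture as the paper's proof: construct a bi-Lipschitz transformation that turns the locally bounded drift into a locally Lipschitz one, verify (local Lip), (non-deg) and (reach) for the transformed process, apply Theorem~\ref{thm:adapt}, and pull the lower bound back via bi-Lipschitz continuity.

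The difference lies in how the transformation is built. The paper does not write down the transform explicitly; instead it first shows in Lemma~\ref{gE:loc:transform} that the localised coefficients $\mu^\ast=1_{[\xi-\delta,\xi+\delta]}\mu$ and the constant continuation $\sigma^\ast$ of $\sigma$ satisfy the abstract condition (transform), by composing a Lamperti-type map with the transformation supplied by~\cite{EMGY25Hoelder} for bounded integrable drifts and unit diffusion. The resulting $G^\ast$ then makes the \emph{globally extended} coefficients Lipschitz, and since $\mu^\ast=\mu$, $\sigma^\ast=\sigma$ on $[\xi-\delta,\xi+\delta]$, the transformed original coefficients are Lipschitz on the image of that interval. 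You instead take the classical scale function $\Phi$ for the localised coefficients and observe that it kills the drift outright on $\Phi(B_{\delta_1}(\xi))$. Your route is more elementary and self-contained --- it avoids the detour through (transform) and the external reference --- at the price of having to extract a genuine ball inside $\Phi(B_{\delta_1}(\xi))$ (the paper's $G^\ast$ happens to map $B_\delta(\xi)$ onto a ball, so this step is invisible there). Both approaches need exactly the boundedness of $\mu$ to guarantee that the transform has bounded, uniformly positive derivative; your remark that this is the only place boundedness enters is accurate. The closing bookkeeping on the adaptive class (rewiring through $\Phi^{-1}$ on the initial value) is handled identically in spirit in both proofs.
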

	
	For the proof of the above theorem, we use the fact that the coefficients $\mu, \sigma$ coincide locally with other coefficients $\mu^\ast, \sigma^\ast$ that satisfy (transform). The exact definitions of $\mu^\ast$ and $\sigma^\ast$ can be seen in the following lemma.
	
	\begin{lemma}\label{gE:loc:transform}
		Assume that there exist $t_0 \in [0, T), T_0 \in (t_0, T], \delta \in (0, \infty)$ and $\xi \in \R$ such that (local reg) and (non-deg*) hold.
		Let
		\[
		\sigma^\ast = \sigma(\xi - \delta) 1_{(-\infty, \xi - \delta)} + \sigma 1_{[\xi-\delta, \xi + \delta]} + \sigma(\xi + \delta) 1_{(\xi + \delta, \infty)}
		\]
		denote the constant continuation of the coefficient $\sigma$ and let
		\[
		\mu^\ast = 1_{[\xi-\delta, \xi+\delta]} \mu.
		\] 
		Then the coefficients $\mu^\ast, \sigma^\ast$ satisfy (transform).
	\end{lemma}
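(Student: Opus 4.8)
The plan is to obtain the transformation required by (transform) as a composition $G = \widetilde G \circ H$, where $H$ is a Lamperti-type transform that turns the diffusion into the constant $1$ and $\widetilde G$ is a further transform that regularises the resulting bounded drift; the existence of $\widetilde G$ will be borrowed from the bounded-drift case of (transform) recorded in~\cite{EMGY25Hoelder}.

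First I would record that, by (local reg) and (non-deg*), the constant continuation $\sigma^\ast$ is globally Lipschitz continuous, bounded, of constant sign and satisfies $\inf_{x \in \R}|\sigma^\ast(x)| > 0$: the bound $|\sigma^\ast(\xi \pm \delta)| \ge \inf_{x \in B_\delta(\xi)}|\sigma(x)| > 0$ holds by continuity, hence the infimum over the compact interval $[\xi-\delta,\xi+\delta]$ is positive, and outside this interval $\sigma^\ast$ is constant and nonzero. Consequently $H(x) := \int_0^x \frac{1}{\sigma^\ast(z)}\,dz$ is bi-Lipschitz and strictly monotone, $H' = 1/\sigma^\ast$ is Lipschitz continuous, and $D^2 H = -(\sigma^\ast)'/(\sigma^\ast)^2$ is a bounded weak derivative of $H'$ which vanishes off $[\xi-\delta,\xi+\delta]$ because $\sigma^\ast$ is constant there --- this is the same computation as in the proof of Theorem~\ref{thm:lowebound}. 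Transforming $\mu^\ast, \sigma^\ast$ with $H$ then yields the coefficients $\mu^H = \bigl(H'\mu^\ast + \frac{1}{2} D^2 H \cdot (\sigma^\ast)^2\bigr) \circ H^{-1}$ and $\sigma^H = (H'\sigma^\ast) \circ H^{-1} = 1$.

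Next I would verify that $\mu^H$ falls into the scope of the result of~\cite{EMGY25Hoelder}. Boundedness of $\mu^H$ follows from boundedness of $H'$, of $\mu^\ast$ (by (local reg)), of $D^2 H$ and of $(\sigma^\ast)^2$. Moreover $\mu^\ast = 1_{[\xi-\delta,\xi+\delta]}\mu$ and $D^2 H$ vanishes off $[\xi-\delta,\xi+\delta]$, so $H'\mu^\ast + \frac{1}{2} D^2 H \cdot (\sigma^\ast)^2$ is supported in $[\xi-\delta,\xi+\delta]$; hence $\mu^H$ is bounded with support in the compact interval $H([\xi-\delta,\xi+\delta])$, and in particular $\mu^H \in L^1(\R)$, so $\sup_{t \in \R}\bigl|\int_0^t \mu^H(z)\,dz\bigr| < \infty$. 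Together with $\sigma^H = 1$, the bounded-drift case of (transform) from~\cite{EMGY25Hoelder} therefore provides a bi-Lipschitz $\widetilde G$ with absolutely continuous derivative $(\widetilde G)'$ such that $\bigl((\widetilde G)'\mu^H + \frac{1}{2} D^2\widetilde G\bigr) \circ (\widetilde G)^{-1}$ and $(\widetilde G)' \circ (\widetilde G)^{-1}$ are Lipschitz continuous.

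Finally I would set $G = \widetilde G \circ H$ and argue, exactly as for the passage to $G^Z = G \circ H^{-1}$ in the proof of Theorem~\ref{thm:lowebound}, that $G$ is bi-Lipschitz with absolutely continuous derivative $G' = ((\widetilde G)'\circ H)\cdot H'$ and weak derivative $D^2 G = (D^2\widetilde G \circ H)(H')^2 + ((\widetilde G)'\circ H)D^2 H$, and that elementary chain- and product-rule computations (using $(H')^2(\sigma^\ast)^2 = 1$ and $G^{-1} = H^{-1}\circ(\widetilde G)^{-1}$) show that $\bigl(G'\mu^\ast + \frac{1}{2} D^2 G \cdot (\sigma^\ast)^2\bigr)\circ G^{-1}$ and $(G'\sigma^\ast)\circ G^{-1}$ coincide with the transformed coefficients of the previous step and are therefore Lipschitz continuous; this just reflects that transforming a transformed SDE amounts to transforming with the composition. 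Hence (transform) holds for $\mu^\ast, \sigma^\ast$ with transformation $G$. The step I expect to require the most care is the middle one: one must make sure that applying the Lamperti transform to the possibly discontinuous bounded drift $\mu^\ast$ really produces a $\mu^H$ that genuinely meets the hypotheses of~\cite{EMGY25Hoelder} --- concretely, that the cut-off structure of $\mu^\ast$ together with the localisation of $D^2 H$ deliver both boundedness and a bounded antiderivative --- whereas the first and last steps are essentially the computations already carried out in the proof of Theorem~\ref{thm:lowebound}.
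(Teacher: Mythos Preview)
Your proposal is correct and follows essentially the same route as the paper: apply the Lamperti-type transform $H$ to reduce to the case $\sigma^H=1$ with a bounded, compactly supported (hence integrable) drift $\mu^H$, invoke~\cite{EMGY25Hoelder} to obtain a transform $\widetilde G$ for $(\mu^H,\sigma^H)$, and then compose to get $G=\widetilde G\circ H$ via the same chain-rule computation as in the proof of Theorem~\ref{thm:lowebound}. Your write-up is in fact slightly more explicit than the paper's about why $\mu^H$ has the required bounded antiderivative.
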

	
	\begin{proof}
		As in the proof of Theorem~\ref{thm:lowebound} we consider the Lamperti-type transform
		\[
		H\colon \R \rightarrow \R, \qquad x \mapsto \int_0^x \frac{1}{\sigma^\ast(z)} \, dz.
		\]
		Similar to the proof of Theorem~\ref{thm:lowebound} one can show that $H$ is differentiable with absolutely continuous derivative $H' = \frac{1}{\sigma^\ast}$ and similar to \eqref{eqn:thm1:3}
		\[
		D^2H = -1_{B_\delta(\xi)}\frac{ \delta_{\sigma}}{\sigma^2}
		\]
		is a weak derivative of $H'$ where $\delta_\sigma(x) = \sigma'(x)$ if $\sigma$ is differentiable in $x$ and $\delta_\sigma(x) = 0$ otherwise for $x \in \R$.	In consideration of (local~reg) and (non-deg*), we therefore obtain that the transformed coefficient $(\mu^\ast)^H = \bigl(H'\mu^\ast + \frac{1}{2}D^2H \cdot (\sigma^\ast)^2 \bigr) \circ H^{-1}$ is a bounded integrable function and we have $(\sigma^\ast)^H = \bigl( H' \sigma^\ast\bigr) \circ H^{-1} = 1$. With Lemma~1 and Lemma~2 in~\cite{EMGY25Hoelder} we see that $(\mu^\ast)^H$ and $(\sigma^\ast)^H$ satisfy (transform). With similar arguments as in the proof of Theorem~\ref{thm:lowebound} also $\mu^\ast$ and $\sigma^\ast$ satisfy (transform).
	\end{proof}
	
	By applying a suitable transformation, Theorem~\ref{thm:adapt:auton} now follows with Theorem~\ref{thm:adapt}.

	\begin{proof}[Proof of Theorem~\ref{thm:adapt:auton}]
		Let $\mu^\ast, \sigma^\ast$ be as in Lemma~\ref{gE:loc:transform}. Then by Lemma~\ref{gE:loc:transform} the coefficients $\mu^\ast, \sigma^\ast$ satisfy (transform) with a bi-Lipschitz continuous transformation $G^\ast \colon \R \rightarrow \R$ and a weak derivative $D^2G^\ast$ of $(G^\ast)'$. The goal is to apply Theorem~\ref{thm:adapt} to the process $Z = G^\ast(X)$ and then the claim follows. Therefore, we prove that the assumptions of Theorem~\ref{thm:adapt} are fulfilled.
		
		By the It\^{o}-formula, see e.g.~\cite[Problem 3.7.3]{ks91}, it holds
		\[
		Z_t = Z_0 + \int_0^t \mu^{G^\ast}(X_s) \, ds + \int_0^t \sigma^{G^\ast}(X_s) \, dW_s, \qquad t \in [0,T],
		\]
		where $\mu^{G^\ast} = \big( (G^\ast)'\mu + \frac{1}{2}D^2G^\ast \cdot \sigma^2 \big) \circ (G^\ast)^{-1}$ and $\sigma^{G^\ast} = ((G^\ast)' \sigma) \circ (G^\ast)^{-1}$.
		
		Since $G^\ast$ is bi-Lipschitz continuous, there exist $\xi^\ast \in \R$ and $\delta^\ast \in (0, \infty)$ such that $G^\ast(B_\delta(\xi)) = B_{\delta^\ast}(\xi^\ast)$.
		
		Now by (transform) and by the Lipschitz continuity of $G^\ast$ the functions $\big( (G^\ast)'\mu^\ast + \frac{1}{2}D^2G^\ast \cdot (\sigma^\ast)^2 \big)$ and $(G^\ast)' \sigma^\ast$ are Lipschitz continuous and hence, by the choice of $\mu^\ast$ and $\sigma^\ast$, $\big( (G^\ast)'\mu + \frac{1}{2}D^2G^\ast \cdot \sigma^2 \big)$ and $(G^\ast)' \sigma$ are Lipschitz continuous on $[\xi-\delta, \xi+\delta]$. Since $G^\ast(B_\delta(\xi)) = B_{\delta^\ast}(\xi^\ast)$, thus $\mu^{G^\ast}$ and $\sigma^{G^\ast}$ are Lipschitz continuous on $[\xi^\ast - \delta^\ast, \xi^\ast + \delta^\ast]$. So, (local Lip) is satisfied. 
		
		Also since $G^\ast(B_\delta(\xi)) = B_{\delta^\ast}(\xi^\ast)$, $G^\ast$ is bi-Lipschitz continuous and (non-deg*) holds,
		\[
		\inf_{x \in B_{\delta^\ast}(\xi^\ast)} |\sigma^{G^\ast}(x)| > 0
		\]
		and therefore (non-deg) is fulfilled.
		
		Moreover, (reach) holds since, because of $G^\ast(B_\delta(\xi)) = B_{\delta^\ast}(\xi^\ast)$, $\PP(Z_{t_0} \in B_{\delta^\ast}(\xi^\ast)) = \PP(X_{t_0} \in B_\delta(\xi))$.

		By the bi-Lipschitz continuity of $G^\ast$ there now exists a constant $c_1 \in (0, \infty)$, which is independent of $n$, such that
		\begin{equation*}
			\begin{aligned}
				&\inf_{\widehat X^n \in \Acal_n^{adapt}(L^1([0,T]), X_0, W)} \EE\big[ \|X - \widehat X^n\|_{L^1([0,T])}\big] \\
				& \qquad \qquad \ge c_1 \inf_{\widehat X^n \in \Acal_n^{adapt}(L^1([0,T]), X_0, W)} \EE\big[ \|Z - G^\ast(\widehat X^n)\|_{L^1([0,T])}\big].
			\end{aligned}
		\end{equation*}
		Since $G^\ast$ is Lipschitz continuous, it satisfies the linear growth property and therefore for any $f \in L^1([0,T])$ it holds $G^\ast(f) \in L^1([0,T])$. Hence, the claim follows with Theorem~\ref{thm:adapt}.
	\end{proof}

\section*{Appendix}
Similar to Lemma 20 in~\cite{hhmg2019}, we show a comparison result for locally regular coefficients.

\begin{lemma}\label{lem:localJump:stopped}
	Assume that $\mu, \sigma, \mu^\ast, \sigma^\ast \colon \R \rightarrow \R$ are measurable functions such that $\mu^\ast, \sigma^\ast$ satisfy (transform) with transformation $G^\ast \colon \R \rightarrow \R$ and weak derivative $D^2G^\ast$ of $(G^\ast)'$. Let $I \subset \R$ be an open interval and assume that
	\[
	\mu^\ast(x) = \mu(x) \qquad \text{and} \qquad \sigma^\ast(x) = \sigma(x) \qquad \text{for $x \in I$}.
	\]
	Assume further that $T \in (0,\infty)$, $V = (V_t)_{t \in [0,T]}$ is a Brownian motion and that $X, X^\ast$ are adapted processes with continuous paths satisfying
	\begin{equation*}
		\begin{aligned}
			X_t &= X_0 + \int_0^t \mu(X_s) \, ds + \int_0^t \sigma(X_s) \, dV_s, \qquad \qquad &t \in [0,T], \\
			X^\ast_t &= X_0 + \int_0^t \mu^\ast(X^\ast_s) \, ds + \int_0^t \sigma^\ast(X^\ast_s) \, dV_s, \qquad \qquad &t \in [0,T].
		\end{aligned}
	\end{equation*}
	Set
	\[
	\tau := \inf\{s \in [0, T] \colon X_s \notin I\} \wedge \inf\{s \in [0, T] \colon X^\ast_s \notin I\} \wedge T.
	\]
	Then $\PP$-almost surely for all $t \in [0,T]$
	\[
	X(t \wedge \tau) = X^\ast(t \wedge \tau).
	\]
	Moreover,
	\[
	\PP(\{\forall t \in [0,T] \colon X^\ast_t = X_t\} \cap \{\forall t \in [0,T] \colon X^\ast_t \in I\}) = \PP( \{\forall t \in [0,T] \colon X^\ast_t \in I\}) .
	\]
\end{lemma}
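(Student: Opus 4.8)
The plan is to reduce the statement, via the transformation $G^\ast$, to pathwise uniqueness for an SDE with Lipschitz continuous coefficients. Write $\tau_X := \inf\{s\in[0,T]\colon X_s\notin I\}\wedge T$ and $\tau_{X^\ast} := \inf\{s\in[0,T]\colon X^\ast_s\notin I\}\wedge T$, so that $\tau = \tau_X\wedge\tau_{X^\ast}$, and set $\tmu := \bigl((G^\ast)'\mu^\ast + \frac{1}{2} D^2G^\ast\cdot(\sigma^\ast)^2\bigr)\circ(G^\ast)^{-1}$ and $\tsigma := \bigl((G^\ast)'\sigma^\ast\bigr)\circ(G^\ast)^{-1}$, which are Lipschitz continuous by (transform). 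First I would observe that on $\{s<\tau_X\}$ one has $X_s\in I$, hence $\mu(X_s)=\mu^\ast(X_s)$ and $\sigma(X_s)=\sigma^\ast(X_s)$ there; since $\{s\in[0,T]\colon X_s\notin I\}\cap[0,\tau_X]$ consists of at most the single point $\tau_X$ and is thus a Lebesgue- and $\langle X\rangle$-null set, the stopped process $(X_{t\wedge\tau_X})_{t\in[0,T]}$ solves $X_{t\wedge\tau_X} = X_0 + \int_0^{t\wedge\tau_X}\mu^\ast(X_s)\,ds + \int_0^{t\wedge\tau_X}\sigma^\ast(X_s)\,dV_s$.

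Next I would apply the generalized It\^{o} formula, see~\cite[Problem 3.7.3]{ks91}, to $G^\ast$ and the semimartingale $(X_{t\wedge\tau_X})_{t\in[0,T]}$; using the pointwise identities $(G^\ast)'\mu^\ast+\frac{1}{2} D^2G^\ast\,(\sigma^\ast)^2 = \tmu\circ G^\ast$ and $(G^\ast)'\sigma^\ast = \tsigma\circ G^\ast$, this yields that $Y:=G^\ast(X)$ satisfies $Y_{t\wedge\tau_X} = G^\ast(X_0) + \int_0^{t\wedge\tau_X}\tmu(Y_s)\,ds + \int_0^{t\wedge\tau_X}\tsigma(Y_s)\,dV_s$ for all $t\in[0,T]$. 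Applying the same It\^{o} formula to $X^\ast$, which solves the SDE with coefficients $\mu^\ast,\sigma^\ast$ on all of $[0,T]$, shows that $Y^\ast:=G^\ast(X^\ast)$ solves $dY^\ast_t=\tmu(Y^\ast_t)\,dt+\tsigma(Y^\ast_t)\,dV_t$, and stopping at $\tau_X$ gives the same integral equation for $(Y^\ast_{t\wedge\tau_X})_{t\in[0,T]}$. Hence $(Y_{t\wedge\tau_X})_{t\in[0,T]}$ and $(Y^\ast_{t\wedge\tau_X})_{t\in[0,T]}$ both solve the SDE with the Lipschitz coefficients $\tmu,\tsigma$ stopped at $\tau_X$, with the same initial value $G^\ast(X_0)$ and the same driving Brownian motion $V$. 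A standard Gronwall / pathwise-uniqueness argument---after localising by the exit times of $Y$ and $Y^\ast$ from large balls in order to secure the required integrability, as in the proof of Theorem~9.2.4 in~\cite{RevuzYor1999}---then gives $Y_{t\wedge\tau_X}=Y^\ast_{t\wedge\tau_X}$ for all $t\in[0,T]$, $\PP$-almost surely. Since $G^\ast$ is a homeomorphism of $\R$, this is equivalent to $X_{t\wedge\tau_X}=X^\ast_{t\wedge\tau_X}$ for all $t\in[0,T]$, $\PP$-almost surely; because $\tau=\tau_X\wedge\tau_{X^\ast}\le\tau_X$, evaluating at $t\wedge\tau$ yields the first assertion $X(t\wedge\tau)=X^\ast(t\wedge\tau)$.

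For the second assertion I would work on the event $E:=\{\forall t\in[0,T]\colon X^\ast_t\in I\}$, noting that on $E$ one has $X_0=X^\ast_0\in I$. On the $\PP$-full event on which $X_{\cdot\wedge\tau_X}=X^\ast_{\cdot\wedge\tau_X}$ holds, if some $\omega\in E$ had $\tau_X(\omega)<T$, then, $I^{\mathsf c}$ being closed and $X$ continuous, $X_{\tau_X}\notin I$, while $X^\ast_{\tau_X}\in I$ on $E$, contradicting $X_{\tau_X}=X^\ast_{\tau_X}$; hence, up to a $\PP$-null set, $\tau_X=T$ on $E$, so $\tau=T$ there, and therefore $X_t=X^\ast_t$ for all $t\in[0,T]$ on $E$. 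This shows $E\subseteq\{\forall t\in[0,T]\colon X^\ast_t=X_t\}$ modulo a $\PP$-null set, which together with $\{\forall t\in[0,T]\colon X^\ast_t=X_t\}\cap E\subseteq E$ gives the claimed equality of probabilities. I expect the main obstacle to be the bookkeeping around the stopping time $\tau_X$: verifying that passing to the stopped process leaves the SDE with coefficients $\mu^\ast,\sigma^\ast$ intact, and justifying the generalized It\^{o} formula together with the localised Gronwall estimate under the sole hypothesis that $X$ is adapted with continuous paths. By contrast, the algebraic cancellations producing $\tmu,\tsigma$ and the final measure-theoretic comparison are routine once (transform) is in hand.
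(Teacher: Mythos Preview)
Your proposal is correct and follows essentially the same strategy as the paper: transform both processes by $G^\ast$ to land in an SDE with Lipschitz coefficients $\tmu,\tsigma$, and then invoke pathwise uniqueness. The only notable difference is in packaging: the paper applies the It\^{o} formula to the unstopped processes $X,X^\ast$, observes that the resulting transformed coefficients agree on $G^\ast(I)$, and then cites Lemma~20 in~\cite{hhmg2019} for the stopped comparison, whereas you stop at $\tau_X$ first, replace $\mu,\sigma$ by $\mu^\ast,\sigma^\ast$ on the stopped process, and then run the Gronwall/localisation argument yourself. Your route is a bit more self-contained (no external comparison lemma needed) and in fact yields the slightly stronger intermediate statement $X_{t\wedge\tau_X}=X^\ast_{t\wedge\tau_X}$, which you then use cleanly for the second assertion; the paper's route is shorter because the stopped-uniqueness step is outsourced.
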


\begin{proof}
	To prove the statement, we will first transform the solutions $X, X^\ast$ to solutions of SDEs with Lipschitz continuous coefficients and then apply Lemma~20 in~\cite{hhmg2019}.
	
	We set for the by $G^\ast$ transformed coefficients $\widetilde{\mu}^{G^\ast}=\bigl((G^\ast)' \mu  + \frac{1}{2} D^2G^\ast \cdot \sigma^2\bigr) \circ (G^\ast)^{-1}$ as well as $\widetilde{\sigma}^{G^\ast}= \bigl((G^\ast)' \sigma \bigr) \circ (G^\ast)^{-1}$ and by (transform) we have the transformed coefficients $\widetilde{\mu^\ast}=\bigl((G^\ast)' \mu^\ast  + \frac{1}{2} D^2G^\ast \cdot (\sigma^\ast)^2\bigr) \circ (G^\ast)^{-1}$ as well as $\widetilde{\sigma^\ast}= \bigl((G^\ast)' \sigma^\ast \bigr) \circ (G^\ast)^{-1}$.
	
	Now the transformed solution $Z = G^\ast(X)$ satisfies by the It\^{o} formula, see e.g.~\cite[Problem~3.7.3]{ks91},
	\[
	Z_t = G^\ast(X_0) + \int_0^t \widetilde{\mu}^{G^\ast}(Z_s) \, ds + \int_0^t \widetilde{\sigma}^{G^\ast}(Z_s) \, dV_s, \qquad \qquad t \in [0,T],
	\]
	and similarly we obtain for $Z^\ast = G^\ast(X^\ast)$ 
	\[
	Z^\ast_t = G^\ast(X_0) + \int_0^t \widetilde{\mu^\ast}(Z^\ast_s) \, ds + \int_0^t \widetilde{\sigma^\ast}(Z^\ast_s) \, dV_s, \qquad \qquad t \in [0,T].
	\]
	Next, we want to apply Lemma~20 in~\cite{hhmg2019} and we therefore show that its assumptions are satisfied. By (transform), $\widetilde{\mu^\ast}$ and $\widetilde{\sigma^\ast}$ are Lipschitz continuous.  Since $\mu^\ast(z) = \mu(z)$ and $\sigma^\ast(z) = \sigma(z)$ for all $z \in I$, we obtain by the bi-Lipschitz continuity of $G^\ast$ that $\widetilde{\mu}^{G^\ast}(x) = \widetilde{\mu^\ast}(x)$ and $\widetilde{\sigma}^{G^\ast}(x) = \widetilde{\sigma^\ast}(x)$ for all $x \in G^\ast(I)$. Since $G^\ast$ is bi-Lipschitz continuous, $I^\ast = G^\ast(I)$ is again an open interval and for
	\[
	\tau^{G^\ast} := \inf\{s \in [0, T] \colon Z_s \notin I^\ast\} \wedge \inf\{s \in [0, T] \colon Z^\ast_s \notin I^\ast\} \wedge T 
	\]  
	it holds
	\[
	\tau^{G^\ast}  = \tau.
	\]
	Thus, it holds with Lemma~20 in~\cite{hhmg2019} $\PP$-almost surely for all $t \in [0, T]$
	\[
	Z(t \wedge \tau) = Z(t \wedge \tau^{G^\ast}) = Z^\ast(t \wedge \tau^{G^\ast}) = Z^\ast(t \wedge \tau)
	\]
	and 
	\[
	\PP(\{\forall t \in [0,T] \colon Z^\ast_t = Z_t\} \cap \{\forall t \in [0,T] \colon Z^\ast_t \in I^\ast\}) = \PP( \{\forall t \in [0,T] \colon Z^\ast_t \in I^\ast\}) .
	\]
	Since $Z = G^\ast(X)$, $Z^\ast = G^\ast(X^\ast)$ and $G^\ast$ is a bi-Lipschitz continuous function with an absolutely continuous derivative, the claim follows with an application of the It\^{o} formula, see e.g.~\cite[Problem 3.7.3]{ks91}.
	
\end{proof}

\section*{Acknowledgement}
I would like to thank Łukasz Stepien for the suggestion to investigate global errors with the coupling of noise technique.

Moreover, I want to express my gratitude to Thomas M\"uller-Gronbach and also Larisa Yaroslavtseva for their encouragement and useful critiques of this article.

\bibliographystyle{acm}
\bibliography{bibfile}

\end{document}